\def\ifl{\iffalse }
\def\bc{\begin{center}} \def\ec{\end{center}}
\def\ba{\begin{array}} \def\ea{\end{array}}
\def\bea{\begin{eqnarray}} \def\eea{\end{eqnarray}}
\def\beaa{\begin{eqnarray*}} \def\eeaa{\end{eqnarray*}}
\numberwithin{equation}{section}
\theoremstyle{definition}
\newtheorem{thm}{Theorem}[section]
\newtheorem{lem}{Lemma}[section]
\newtheorem{defi}[thm]{Definition}
\theoremstyle{remark}
\newtheorem{rem}{Remark}[section]
\newtheorem*{rem*}{Remark}
\numberwithin{equation}{section}
\newcommand{\R}{\mathbb{R}}
\newcommand{\Z}{\mathbb{Z}}
\renewcommand{\div}{\mathop{\rm div}}
\newcommand{\pa}{\partial}
\newcommand{\na}{\nabla}
\newcommand{\de}{\delta}
\newcommand{\La}{\Lambda}
\newcommand{\Lg}{\langle}
\newcommand{\Rg}{\rangle}
\newcommand{\De}{\Delta}
\newcommand{\ls}{\lesssim}
\newcommand{\T}{\mathbb{T}}
\newcommand{\LP}{\mathbb{P}}
\title[A Besov integration-by-parts method for NS and Euler]{A Besov-based integration-by-parts method for the incompressible Navier-Stokes equations}
\author[X. Cheng]{Xinyu Cheng}
 \address{X. Cheng, Research Institute of Intelligent Complex Systems, Fudan University, Shanghai, P.R. China}
\email{xycheng@fudan.edu.cn}
\author[Z. Luo]{Zhaonan Luo}
\address{Z. Luo, School of Science, Shenzhen Campus of Sun Yat-sen University, Shenzhen, P.R. China}
\email{luozhn7@mail.sysu.edu.cn}
\author[S. Wang]{Sheng Wang}
\address{S. Wang, School of Mathematical Sciences, Peking University, Beijing, P.R. China}
\email{everest\_sheng@pku.edu.cn}
\begin{document}
\maketitle
\begin{abstract}
This note introduces a novel numerical analysis framework for the incompressible Navier-Stokes equations based on Besov spaces. The key contribution of this note is to establish the stability and convergence of a semi-implicit time-stepping scheme by deriving precise error estimates in the $B^0_{\infty,1}$ and $B^0_{\infty,2}$ spaces. Another contribution of our analysis is the detailed treatment of the $B^0_{\infty,2}$ case, where a crucial integration-by-parts technique is employed to adeptly handle the nonlinear advection term. This technique allows for a refined estimate that effectively transfers derivatives onto the test functions, mitigating the inherent analytical challenges posed by the low regularity of these spaces. Our results provide sharper, more localized error bounds than in classical Sobolev spaces, directly linking the scheme's convergence to the critical regularity of the continuous solution. This work underscores the advantage of Besov spaces for the numerical analysis of nonlinear fluid PDEs.
\end{abstract}

\section{Introduction}

The numerical analysis of the incompressible fluid equations remains a topic of profound and enduring interest, standing at the crossroads of theoretical mathematics (partial differential equations and analysis) and computational fluid dynamics (engineering). While the (global) well-posedness theory for these equations is famously challenging, the development of robust and efficient numerical schemes is equally critical for predictive simulations across science and engineering. A vast body of literature is devoted to constructing discretization methods, with a predominant focus on error estimates formulated within the classical Sobolev spaces, such as $L^2(\Omega)$ and $H^s(\Omega)$. These frameworks, though powerful, can sometimes obscure finer-grained information about the solution, particularly concerning its pointwise behavior or spaces closely related to the well-posedness theory of the equations themselves.

In this paper, we introduce a novel numerical analysis framework grounded in the theory of Besov spaces. Specifically speaking, we will perform a rigorous error analysis for a semi-implicit time-stepping scheme applied to the Navier-Stokes equations, deriving error bounds in the $B^0_{\infty,1}$ and $B^0_{\infty,2}$ norms. Here the incompressible Navier-Stokes system (NS) we consider are described as below:
\begin{equation}\label{NSE}
\begin{cases}
&\pa_t u +u \cdot \na u+\na p =\nu \De u,\qquad \na \cdot u=0;\\
&u|_{t=0}=u_0.
\end{cases}
\end{equation}
The solution $u:\R^+\times \Omega\to \R^d$ is the velocity field and $ p :\R^+\times \Omega \to \R$ is the pressure. Moreover, the divergence-free condition $\na \cdot u=0$ is to guarantee the incompressiblity. Since we are interested in the analysis framework, for the sake of implicit we shall only consider the periodic boundary condition: $\Omega=\T^2$. And the semi-implicit scheme we consider is the following:
\begin{equation}\label{1.3}
\begin{cases}
&\frac{u^{n+1}-u^n}{\tau}+(u^{n}\cdot \na) u^{n+1}+\na p^n= \nu \Delta u^{n+1},\\
&\div u^{n+1}=0,
\end{cases}
\end{equation}
where $\tau$ is the time step. Moreover in \eqref{1.3} the viscosity term $\nu \Delta u^{n+1}$ is unknown and the nonlinear term $u^{n}\cdot \na u^{n+1}$ is indeed linear with respect to $u^{n+1}$ at $n+1$-th time step. In addition we can further derive that $\div u^n=0$ for all $n$ by induction.

This choice of functional setting is motivated by several compelling factors. Firstly, Besov spaces like $B^0_{\infty,1}$ are known to be critical for the existence and uniqueness theory of Navier-Stokes and Euler equations; indeed the global well-posedness of the Cauchy problem of the Euler equations in the borderline Besov space $B^{\frac{3}{p}+1}_{p,1}(\R^3)$ for $p\in[1,\infty]$ is known to be open, cf. Bourgain-Li \cite{BL15,BL15b}. These spaces provide a natural and less restrictive environment for analyzing solutions compared to Sobolev spaces, particularly for problems involving rough initial data or intricate vortex structures. Secondly, an error estimate in $B^0_{\infty,\cdot}$ offers a stronger, more localized description over the pointwise error of the numerical solution than an $L^2$-estimate. It is worth mentioning here in a recent work \cite{CLW25}, a numerical method of low regularity assumption was developed to study the surface quasi-geostrophic equations in a $B^0_{\infty,1}$ set-up, which motivates us to develop a more general $B^0_{\infty,1}$ framework for impressible fluid equations such as Navier-Stokes and Euler equations in this presenting note. Moreover, in another recent work \cite{CLW24} we observe that a simple integration-by-parts can help to lower the regularity assumption, which motivates us to study the $B^0_{\infty,2}$ numerical space.

Our work demonstrates that the proposed semi-implicit scheme is stable and convergent within this refined functional framework. The analysis reveals how the regularity of the exact solution, measured in these Besov norms, directly dictates the convergence rates of the numerical method. By bridging the gap between the analytical functional setting of the continuous equations and the discrete numerical analysis, this approach not only provides stronger theoretical guarantees but also offers deeper insight into the structure of the error. We anticipate that this Besov-based framework will open new avenues for the analysis and development of numerical methods for a broader class of nonlinear fluid PDEs where such spaces are intrinsic to the well-posedness theory.

We now give a brief review on the literature of interest. First of all, we refer the well-posedness theory of the Euler and Navier-Stokes equations to \cite{BCD11,Y63,BKM84,L34} and the references therein. We also refer the readers to \cite{CLYY24,LL11} for more discusssion in dfferent models and function spaces. Since we are also interested in developing the vanishing-viscosity-limit structure-preserving schemes, we list here several related results for different fluid models in different function spaces: \cite{CLW25,GLY19,HK08,M07}. The ill-posednes of the different fluid systems have been investigated, cf. \cite{BL15,BL15b,BP08}. The non-uniqueness of different fluid equations have been obtained in \cite{BSV19,CKL21,DLS13} and etc. On the other hand, there are many numerical methods to study the incompressible fluid models including Euler and Navier-Stokes equations. For example, the backward Euler differentiation method cf.\cite{He08,Wang12}, finite element methods cf. \cite{HR82,HR90,HS00, HZ18,V84}, finite difference methods cf. \cite{Chorin68,EL95,EL02,Wetton06}, spectral methods cf. \cite{GZ03,CLW24}, the Lagrange–Galerkin method cf. \cite{He13,S88}, the projection method cf. \cite{Chorin69,HW93,EL95,EL02}, exponential low regularity methods cf. \cite{LMS22,RS21,BLW22,WZ22,CLW25} and neural network based methods cf. \cite{HJE18, JCLK21,KKLPWY21}. We also refer the readers to \cite{MT98,Tem66,Tem24} and the references therein for a more detailed discussion.

Now in pratice to solve $u^{n+1}$ from \eqref{1.3}, we recall the Leray projection $\LP$, namely the $L^2$-orthogonal projection onto the divergence-free subspace: for any $v\in L^2$ we have $\LP v=v-\na q$, where $q\in H^1$ solves the following Poisson equation under periodic boundary conditions:
\begin{equation*}
        \De q=\na \cdot v.
\end{equation*}
Then we apply the Leray projection $\LP$ to derive that 
{\begin{equation}\label{1.5}
\frac{u^{n+1}-u^n}{\tau} +\LP(u^n \cdot \na u^{n+1})=\nu\Delta u^{n+1}.
\end{equation}}
In fact we will solve the NS (Euler) equations from the first order semi-implicit scheme \eqref{1.5}. We shall show our scheme \eqref{1.5} preserves regularity stability; moreover the error will be shown with numerical evidence and rigorous analysis. Our main results state below. To start with, we first present the stability result of the semi-implicit scheme \eqref{1.3} or \eqref{1.5}.

\begin{thm}[``Unconditional stability'']\label{Thm1.1}
Consider the NS equations \eqref{NSE} in $\T^2$. Assume that the initial data is $u_0$ then there exists $T>0$ such that the following statements hold for any $M\in \mathbb{Z}^+$, $n\le M$ and viscosity $\nu\ge 0$. We only require the time step $\tau=\frac{T}{M}\le T$ which is essentially unconditional.
\begin{description}
    \item[(i)] Assume $u_0\in B^{s}_{\infty,1}$ with $s\ge1$, then the $B^{s}_{\infty,1}$-norm is uniformly bounded:
    \begin{equation}\label{1.6}
       \sup_n\|u^n\|_{B^{s}_{\infty,1}}\le 8\|u_0\|_{B^{s}_{\infty,1}}.
    \end{equation}
\item[(ii)]Assume $u_0\in B^{s}_{\infty,2}$ with $s>1$, then the $B^{s}_{\infty,2}$-norm is uniformly bounded:
\begin{equation}\label{1.7}
   \sup_n\|u^n\|_{B^{s}_{\infty,2}}\le 8\|u_0\|_{B^{s}_{\infty,2}}.
\end{equation}
\end{description}

\end{thm}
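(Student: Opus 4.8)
The plan is to prove \eqref{1.6} and \eqref{1.7} together by induction on $n$: for a time horizon $T$ depending only on $\|u_0\|$ — and on neither $\nu$ nor $\tau$ — I will show that if $\|u^k\|_{B^s_{\infty,r}}\le 8\|u_0\|_{B^s_{\infty,r}}$ for all $k\le n$ (with $r\in\{1,2\}$ and the corresponding $s$), then the same bound holds at $k=n+1$. The mechanism is a per-frequency transport--diffusion estimate for the Littlewood--Paley blocks $\Delta_j u^{n+1}$, summed up into a discrete Grönwall-type inequality. As a preliminary one uses that each step \eqref{1.5} is the linear elliptic problem $(I-\nu\tau\Delta+\tau\LP(u^n\cdot\nabla))u^{n+1}=u^n$ posed on divergence-free fields — uniquely solvable by Lax--Milgram when $\nu>0$ (the transport part $\LP(u^n\cdot\nabla)$ is skew-symmetric, so the form is coercive) and by a limiting argument when $\nu=0$ — and that $\div u^n=0$ (recorded already in the setup) and $\langle u^n\rangle=\langle u_0\rangle$ for all $n$.

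Applying $\Delta_j$ to \eqref{1.3} and splitting off the transport part of the nonlinearity, $\Delta_j(u^n\cdot\nabla u^{n+1})=(u^n\cdot\nabla)\Delta_j u^{n+1}+[\Delta_j,u^n\cdot\nabla]u^{n+1}$, and using that $\LP$ and $\Delta$ commute with $\Delta_j$ together with $\LP\Delta_j u^{n+1}=\Delta_j u^{n+1}$ (since $u^{n+1}$ is divergence-free), one obtains
\begin{equation*}
\Delta_j u^{n+1}+\tau\,(u^n\cdot\nabla)\Delta_j u^{n+1}-\nu\tau\,\Delta\Delta_j u^{n+1}=\Delta_j u^n-\tau\,R^n_j,
\end{equation*}
where $R^n_j:=\LP[\Delta_j,u^n\cdot\nabla]u^{n+1}+[\LP,u^n\cdot\nabla]\Delta_j u^{n+1}$. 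Evaluating this identity at a point where $|\Delta_j u^{n+1}|$ attains its maximum, the transport term drops out (its spatial gradient vanishes there) while the viscous term carries the favorable sign, so $\|\Delta_j u^{n+1}\|_{L^\infty}\le\|\Delta_j u^n\|_{L^\infty}+\tau\|R^n_j\|_{L^\infty}$. (In two dimensions one may instead take the curl of \eqref{1.3} and run the identical argument on $\Delta_j\omega^{n+1}$ with $\omega^{n+1}=\curl u^{n+1}$, recovering $u^{n+1}$ from $\omega^{n+1}$ and the conserved mean by Biot--Savart; this eliminates the pressure and the projection, at the cost of an extra term $\curl(u^n\cdot\nabla u^{n+1})-u^n\cdot\nabla\omega^{n+1}$ that is bilinear in $\nabla u^n$ and $\nabla u^{n+1}$ and hence of lower order.)

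The crux is then the uniform remainder bound $\|R^n_j\|_{L^\infty}\le C\,d_j\,2^{-js}\|u^n\|_{B^s_{\infty,r}}\|u^{n+1}\|_{B^s_{\infty,r}}$ with $\|(d_j)\|_{\ell^r}\le1$. For $[\Delta_j,u^n\cdot\nabla]u^{n+1}$ this is the classical Besov commutator estimate coming from Bony's decomposition, which gains one derivative over $u^n\cdot\nabla u^{n+1}$ and costs only $\|\nabla u^n\|_{L^\infty}\lesssim\|u^n\|_{B^s_{\infty,r}}$; this last embedding is valid for $s\ge1$ when $r=1$ and for $s>1$ when $r=2$, which is exactly the origin of the regularity thresholds in (i) and (ii). For the second piece, $\LP$ is a smooth degree-zero Fourier multiplier away from the origin, hence bounded on $B^s_{\infty,r}(\T^2)$, and since the principal symbols in $[\LP,u^n\cdot\nabla]$ cancel it is a zeroth-order operator whose norm is controlled by $\|\nabla u^n\|_{L^\infty}$, so it does not cost a derivative. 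Multiplying by $2^{js}$ and summing in $\ell^r_j$ gives
\begin{equation*}
\|u^{n+1}\|_{B^s_{\infty,r}}\le\|u^n\|_{B^s_{\infty,r}}+C\tau\,\|u^n\|_{B^s_{\infty,r}}\,\|u^{n+1}\|_{B^s_{\infty,r}}.
\end{equation*}

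To close the induction, set $E:=\|u_0\|_{B^s_{\infty,r}}$ and $T:=(16CE)^{-1}$, so $\tau\le T$ forces $C\tau\|u^n\|_{B^s_{\infty,r}}\le C\tau\cdot8E\le\tfrac12$; the displayed inequality then gives first $\|u^{n+1}\|_{B^s_{\infty,r}}\le2\|u^n\|_{B^s_{\infty,r}}$ and hence $\|u^{n+1}\|_{B^s_{\infty,r}}\le\|u^n\|_{B^s_{\infty,r}}(1+16CE\tau)$; iterating and using $(n+1)\tau\le M\tau=T$ yields $\|u^{n+1}\|_{B^s_{\infty,r}}\le E\,e^{16CET}=e\,E<8E$, which completes the induction and proves \eqref{1.6}--\eqref{1.7}. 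The hard part is the uniform remainder estimate: one must peel off the transport part so that the maximum principle applies, handle the pressure/Leray projection without losing a derivative, and — decisively for part (ii) — promote the summation from $\ell^1_j$ to $\ell^2_j$, which is genuinely more delicate at low regularity and is precisely why (ii) requires the strict inequality $s>1$ rather than $s\ge1$. The implicitness in time, by contrast, causes no essential difficulty once the correct a priori inequality is in hand.
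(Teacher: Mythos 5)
Your proof is correct and follows essentially the same route as the paper: a per-block transport--diffusion estimate in which the transport term is annihilated by the divergence-free structure and the viscosity contributes a favorable sign, a commutator bound of the type in Lemma~\ref{CI} (your $R^n_j$ is exactly the paper's $[u^n\cdot\nabla,\LP\De_j]u^{n+1}$ split into two pieces), and a discrete Gr\"onwall iteration on a time interval $T\sim\|u_0\|_{B^s_{\infty,r}}^{-1}$, with the $s\ge1$ versus $s>1$ thresholds arising for the same reason (controlling $\|\nabla u^n\|_{L^\infty}$ by the $\ell^1$ versus $\ell^2$ Besov norm). The only cosmetic differences are that the paper obtains the $L^\infty$ block inequality via $L^p$ energy estimates with $p\to\infty$ rather than by evaluating at the point of maximum, and it closes the iteration with $(1+\tfrac{1}{M-1})^{M+1}\le 8$ rather than with $e\,E<8E$.
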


\begin{rem}
  Since the main purpose of the presenting paper is to provide a Besov framework for solving the NS equations numerically, we will consider peridoic domain for simplicity. In fact our method can be applied to either Dirichlet or Neumann boundary conditions. Our framework can be applied to the no-slip boundary condition as well with careful modifications. In such case, more terms arises due to the commutator of Leray projection and Laplacian (and others).
\end{rem}

The error can be analyzed by the following theorems.

\begin{thm}[$B^s_{\infty,1}$-Error estimates]\label{Thm1.2}
Assume that $u(t,x) $ is the exact solution to \eqref{NSE} with the same initial condition $u_0\in B^s_{\infty,1}(\T^2)$ for $s\ge4$. Then the following error estimates hold.
\begin{description}
    \item[(i)] The $B^0_{\infty,1}$-error estimate holds:
 \begin{equation}\label{1.8a}
   \sup_{n} \|u^{n+1} -u \left( t_{n+1}\right)\|_{B^0_{\infty,1}}\leq C_1\tau ;
 \end{equation}
 
\item[(ii)]  Additionally if $u_0\in B^{s+4}_{\infty,1}(\T^2)$, then the $B^s_{\infty,1}$-error estimate holds:
\begin{equation}\label{1.8b}
   \sup_n\|u^{n+1} - u\left( t_{n+1}\right)\|_{B^s_{\infty,1}}\le  C_2\tau.
\end{equation}
\end{description}
Here the constants $C_1,C_2>0$ above only depend on the initial condition $u_0$. Both constants are independent of the viscosity $\nu$.
\end{thm}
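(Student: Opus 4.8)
The plan is to set up an error equation for $e^n:=u^n-u(t_n)$, isolate a consistency error, reduce the implicit step to the transport--diffusion solvability estimate underlying Theorem~\ref{Thm1.1}, and then close with a discrete Gr\"onwall inequality (with vanishing initial error $e^0=0$). Writing the scheme as $(I+\tau\LP(u^n\cdot\na)-\nu\tau\De)u^{n+1}=u^n$ and defining the consistency error $R^n$ by $(I+\tau\LP(u(t_n)\cdot\na)-\nu\tau\De)u(t_{n+1})=u(t_n)+\tau R^n$, the fact that the exact solution satisfies the Leray-projected system $\pa_t u+\LP(u\cdot\na u)=\nu\De u$ gives
\begin{equation*}
R^n=\Big(\frac{u(t_{n+1})-u(t_n)}{\tau}-\pa_t u(t_{n+1})\Big)+\LP\big((u(t_n)-u(t_{n+1}))\cdot\na u(t_{n+1})\big).
\end{equation*}
I would bound the first bracket by $\tau\sup_{[0,T]}\|\pa_t^2u\|_{B^0_{\infty,1}}$ via Taylor expansion with integral remainder, and the second term by $\tau\,\sup_{[0,T]}\|\pa_t u\|_{B^0_{\infty,1}}\,\sup_{[0,T]}\|\na u\|_{B^0_{\infty,1}}$, using that $\LP$ is bounded on $B^0_{\infty,1}$ and that $B^0_{\infty,1}$ is a Banach algebra. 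Differentiating the equation, $\pa_t u$ and $\pa_t^2u$ lose at most two and four spatial derivatives respectively --- uniformly in $\nu\ge0$, the worst contributions being $\nu\De u$ and $\nu\De\pa_t u$ --- so together with the $2$D global bounds for the exact solution this is precisely why $u_0\in B^s_{\infty,1}$ with $s\ge4$ gives $\|R^n\|_{B^0_{\infty,1}}\le C\tau$, and why $u_0\in B^{s+4}_{\infty,1}$ is needed for $\|R^n\|_{B^s_{\infty,1}}\le C\tau$.

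Subtracting the scheme from the consistency relation and splitting $\LP(u^n\cdot\na u^{n+1})-\LP(u(t_n)\cdot\na u(t_{n+1}))=\LP(e^n\cdot\na u^{n+1})+\LP(u(t_n)\cdot\na e^{n+1})$ yields the error equation
\begin{equation*}
\big(I+\tau\LP(u(t_n)\cdot\na)-\nu\tau\De\big)e^{n+1}=e^n-\tau\,\LP(e^n\cdot\na u^{n+1})-\tau R^n,\qquad e^0=0.
\end{equation*}
The key ingredient is the a priori bound for the implicit solve: for divergence-free $b$, the solution of $(I+\tau\LP(b\cdot\na)-\nu\tau\De)v=g$ satisfies $\|v\|_{B^\sigma_{\infty,1}}\le(1+C\tau)\|g\|_{B^\sigma_{\infty,1}}$ with $C=C(\|b\|_{B^{\max(\sigma,1)}_{\infty,1}})$ uniform in $\nu\ge0$. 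One proves this by localizing with Littlewood--Paley blocks, using that $\LP$ commutes with $\De_j$ and is bounded on $B^\sigma_{\infty,q}$, controlling the commutator $[\De_j,b\cdot\na]$ by the standard Besov commutator estimate and the induced pressure term by $B^\sigma_{\infty,1}$-boundedness of $\na\De^{-1}\div$, and applying the maximum principle on each block (where $-\nu\tau\De$ only helps); this is exactly the machinery used to prove Theorem~\ref{Thm1.1}, so I would invoke it with $b=u(t_n)$, whose Besov norm is bounded on $[0,T]$ by $2$D well-posedness, uniformly in $\nu$. Combined with the product estimate $\|\LP(e^n\cdot\na u^{n+1})\|_{B^0_{\infty,1}}\lesssim\|e^n\|_{B^0_{\infty,1}}\|u^{n+1}\|_{B^1_{\infty,1}}$ and the uniform bound $\|u^{n+1}\|_{B^1_{\infty,1}}\le 8\|u_0\|_{B^1_{\infty,1}}$ from Theorem~\ref{Thm1.1}(i), this yields $\|e^{n+1}\|_{B^0_{\infty,1}}\le(1+C\tau)\|e^n\|_{B^0_{\infty,1}}+C\tau^2$, and discrete Gr\"onwall with $e^0=0$ gives $\sup_n\|e^n\|_{B^0_{\infty,1}}\le C_1\tau$, proving (i).

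For (ii) I would run the same argument in $B^s_{\infty,1}$. The one genuinely new estimate is the advective error term, which I would split by the Besov product rule as
\begin{equation*}
\|\LP(e^n\cdot\na u^{n+1})\|_{B^s_{\infty,1}}\lesssim\|e^n\|_{B^s_{\infty,1}}\|\na u^{n+1}\|_{L^\infty}+\|e^n\|_{L^\infty}\|\na u^{n+1}\|_{B^s_{\infty,1}};
\end{equation*}
in the second term $\|e^n\|_{L^\infty}\lesssim\|e^n\|_{B^0_{\infty,1}}\le C_1\tau$ by part (i) and $\|\na u^{n+1}\|_{B^s_{\infty,1}}\le 8\|u_0\|_{B^{s+1}_{\infty,1}}$ by Theorem~\ref{Thm1.1}(i), so that contribution is $O(\tau)$; together with $\|R^n\|_{B^s_{\infty,1}}\le C\tau$ this again gives $\|e^{n+1}\|_{B^s_{\infty,1}}\le(1+C\tau)\|e^n\|_{B^s_{\infty,1}}+C\tau^2$, and discrete Gr\"onwall closes the estimate. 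I expect the main obstacle to be the uniform-in-$\nu$ solvability estimate for the Leray-projected transport--diffusion step in $B^\sigma_{\infty,1}$ (handling the pressure correction and the Littlewood--Paley commutator at this borderline regularity), with a close second being the precise bookkeeping of the spatial derivatives lost by $\pa_t u$ and $\pa_t^2u$, which is what fixes the loss of four derivatives in the hypotheses.
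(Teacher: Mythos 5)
Your proposal is correct in substance and reaches the same recursion $\|e^{n+1}\|_{B^0_{\infty,1}}\le(1+C\tau)\|e^n\|_{B^0_{\infty,1}}+C\tau^2$ as the paper, but it is organized differently. The paper never introduces a consistency residual $R^n$ or the second time derivative $\partial_t^2u$: it integrates the exact equation over $[t_n,t_{n+1}]$, splits $u\cdot\na u-u^n\cdot\na u^{n+1}$ into four pieces (two consistency-type pieces involving $u(t')-u(t_n)$ and $u(t')-u(t_{n+1})$, plus $e^n\cdot\na u(t_{n+1})$ and $u^n\cdot\na e^{n+1}$), and estimates everything block by block by pairing with $|\De_j e^{n+1}|^{p-2}\De_j e^{n+1}$ and letting $p\to\infty$, using the commutator $[\,\cdot\,\na,\LP\De_j]$ to kill the transport of $e^{n+1}$. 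You instead use the classical consistency-plus-stability architecture, quantifying the local truncation error through $\partial_t u$ and $\partial_t^2u$ and treating the implicit one-step operator $(I+\tau\LP(b\cdot\na)-\nu\tau\De)$ as a black box with a $(1+C\tau)$ bound in $B^\sigma_{\infty,1}$ --- a bound that is proved by exactly the machinery of Theorem~\ref{Thm1.1}, so the underlying estimates (Lemmas~\ref{CI}, \ref{PL}, \ref{P}, the nonnegativity of the dissipation on each block, discrete Gr\"onwall) coincide. Your route is more modular and makes the derivative count transparent (the $s\ge4$ hypothesis visibly comes from $\nu^2\De^2u$ inside $\partial_t^2u$, matching the paper's $\nu^2\tau^2\|u_0\|_{B^4_{\infty,1}}$ term); the paper's route avoids isolating a standalone solvability lemma and is the form that carries over to the $B^0_{\infty,2}$ case, where the block-wise dissipation term is exploited via integration by parts. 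Two small cautions: $B^0_{\infty,1}$ is \emph{not} a Banach algebra (the paper's Remark~\ref{rem2.1} says so explicitly), so your bound on the advection-lag part of $R^n$ must be justified by the div-curl product rule of Lemma~\ref{PL} together with Lemma~\ref{P} for the Leray projection, not by an algebra property --- this works here because $u(t_n)-u(t_{n+1})$ and $u(t_{n+1})$ are both divergence free; and for part (ii) with $s>1$ the pressure estimate of Lemma~\ref{P} is stated only for $-1<\sigma\le1$, so the boundedness of $\na\De^{-1}\div$ on $B^s_{\infty,1}$ for mean-zero periodic functions needs to be invoked separately (it holds since the symbol is of order zero away from the origin).
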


\begin{thm}[$B^s_{\infty,2}$-Error estimates]\label{Thm1.3}
 Assume that $u(t,x) $ is the exact solution to \eqref{NSE} with the same initial condition $u_0\in B^s_{\infty,2}(\T^2)$ for $s\ge3$. Then the following error estimates hold.
\begin{description}
    \item[(i)] The $B^0_{\infty,2}$-error estimate holds:
 \begin{equation}\label{1.9a}
   \sup_{n} \|u^{n+1} -u \left( t_{n+1}\right)\|_{B^0_{\infty,2}}\leq C_1\tau ;
 \end{equation}
 
\item[(ii)]  Additionally if $u_0\in B^{s+3}_{\infty,2}(\T^2)$, then the $B^s_{\infty,1}$-error estimate holds:
\begin{equation}\label{1.9b}
   \sup_n\|u^{n+1} - u\left( t_{n+1}\right)\|_{B^s_{\infty,2}}\le  C_2\tau.
\end{equation}
\end{description}
Here the constants $C_1,C_2>0$ above only depend on the initial condition $u_0$. Both constants are independent of the viscosity $\nu$.
\end{thm}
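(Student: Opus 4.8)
The plan is to derive a discrete‑in‑time recursion for the error $e^{n+1}:=u^{n+1}-u(t_{n+1})$ at the level of Littlewood--Paley blocks and close it by a discrete Gronwall inequality, in parallel with the $B^s_{\infty,1}$ analysis behind Theorem~\ref{Thm1.2}, but now with two extra ingredients forced by the rougher target space: an integration‑by‑parts treatment of the Leray‑projection correction, and a resolvent absorption of the viscous part of the consistency error. For the consistency step I would integrate \eqref{NSE} on $[t_n,t_{n+1}]$, apply $\LP$, and integrate by parts once in time to write
\[
(I-\tau\nu\De)\,u(t_{n+1})+\tau\LP\big(u(t_{n+1})\cdot\na u(t_{n+1})\big)=u(t_n)+\tau\nu\De\Psi^{n+1}+\tau G^{n+1},
\]
with $\Psi^{n+1}=\tfrac1\tau\int_{t_n}^{t_{n+1}}(s-t_n)\,\pa_s u(s)\,ds$ and $G^{n+1}=-\tfrac1\tau\int_{t_n}^{t_{n+1}}(s-t_n)\,\pa_s\LP(u\cdot\na u)(s)\,ds$. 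Keeping the viscous part of the remainder in the form $\nu\De\Psi^{n+1}$ — rather than trading it for $\pa_s^2u$, which would require $u\in B^4_{\infty,2}$ — is precisely what lets $s\ge3$ suffice: one only needs $\pa_tu\in C([0,T];B^1_{\infty,2})$ and $\LP(\pa_tu\cdot\na u)+\LP(u\cdot\na\pa_tu)\in C([0,T];B^0_{\infty,2})$, both of which follow from $u\in C([0,T];B^3_{\infty,2})$ via the equation, so $\|G^{n+1}\|_{B^0_{\infty,2}}\lesssim\tau$ and $\|\Psi^{n+1}\|_{B^1_{\infty,2}}\lesssim\tau$.

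Next, subtracting \eqref{1.5}, writing $u^n\cdot\na u^{n+1}-u(t_{n+1})\cdot\na u(t_{n+1})=u^n\cdot\na e^{n+1}+(u^n-u(t_{n+1}))\cdot\na u(t_{n+1})$ with $u^n-u(t_{n+1})=e^n+O(\tau)$, applying $\Delta_j$, and collecting the stationary transport--diffusion operator $\mathcal{L}_n:=I-\tau\nu\De+\tau\,u^n\cdot\na$, I arrive at
\[
\mathcal{L}_n\Delta_je^{n+1}=\Delta_je^n+\tau\,\Delta_j\na\De^{-1}\!\div(u^n\cdot\na e^{n+1})-\tau[\Delta_j,u^n\cdot\na]e^{n+1}-\tau\,\Delta_j\LP(e^n\cdot\na u(t_{n+1}))-\tau\nu\De\Delta_j\Psi^{n+1}-\tau\,\Delta_jG^{n+1}+\tau\,\Delta_j(\mathrm{l.o.t.}).
\]
Since $\div u^n=0$, the stationary problem $\mathcal{L}_nv=g$ obeys a maximum principle at the extrema of $v$, so $\|\mathcal{L}_n^{-1}\|_{L^\infty\to L^\infty}\le1$ uniformly in $\nu\ge0$, $\tau$ and $j$; and since $\tau\nu\De=-(\mathcal{L}_n-I-\tau u^n\cdot\na)$, applying $\mathcal{L}_n^{-1}$ turns the term $\tau\nu\De\Delta_j\Psi^{n+1}$ into $O(\|\Delta_j\Psi^{n+1}\|_{L^\infty})+\tau\,O\!\big(\|u^n\|_{L^\infty}2^{j}\|\Delta_j\Psi^{n+1}\|_{L^\infty}\big)$, which is $O(\tau)$ after $\ell^2_j$‑summation (here is where $\Psi^{n+1}\in B^1_{\infty,2}$, hence $u\in B^3_{\infty,2}$, is used). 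Applying $\mathcal{L}_n^{-1}$, taking $L^\infty$ norms, squaring and summing over $j$ then gives $\|e^{n+1}\|_{B^0_{\infty,2}}\le\|e^n\|_{B^0_{\infty,2}}+\tau(\mathrm{Comm}+\mathrm{Press}+\mathrm{Lin}+\mathrm{Cons})$.

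It remains to bound the four terms in the $\ell^2_j(L^\infty_x)$ norm. By the Bony decomposition and the uniform bound $\|u^n\|_{B^3_{\infty,2}}\lesssim\|u_0\|_{B^3_{\infty,2}}$ from Theorem~\ref{Thm1.1}(ii), $\mathrm{Comm}=\big\|([\Delta_j,u^n\cdot\na]e^{n+1})_j\big\|_{\ell^2_jL^\infty}\lesssim\|u^n\|_{B^2_{\infty,2}}\|e^{n+1}\|_{B^0_{\infty,2}}$, with no derivative falling on $e^{n+1}$. The pressure correction is the crux: $\na\De^{-1}\div$ is of order zero, so a naive estimate would differentiate $e^{n+1}$; instead, using $\div u^n=0$ and $\div e^{n+1}=0$, one has the integration‑by‑parts identity $\div(u^n\cdot\na e^{n+1})=\pa_k\pa_\ell(u^n_ke^{n+1}_\ell)=\pa_\ell u^n_k\,\pa_k e^{n+1}_\ell$, hence $\na\De^{-1}\div(u^n\cdot\na e^{n+1})=\na\De^{-1}(\na u^n:\na e^{n+1})$, and the $(-1)$‑smoothing of $\na\De^{-1}$ compensates exactly one derivative, so $\mathrm{Press}\lesssim\|\na u^n:\na e^{n+1}\|_{B^{-1}_{\infty,2}}\lesssim\|u^n\|_{B^2_{\infty,2}}\|e^{n+1}\|_{B^0_{\infty,2}}$ by a paraproduct estimate. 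Finally $\mathrm{Lin}=\|\LP(e^n\cdot\na u(t_{n+1}))\|_{B^0_{\infty,2}}\lesssim\|e^n\|_{B^0_{\infty,2}}$ since $\LP$ is bounded on each dyadic block and $\na u(t_{n+1})$ is smooth, $\mathrm{Cons}\lesssim\|G^{n+1}\|_{B^0_{\infty,2}}+\|\Psi^{n+1}\|_{B^1_{\infty,2}}+O(\tau)\lesssim\tau$, and the l.o.t.\ are $O(\tau)$. Collecting everything, $\|e^{n+1}\|_{B^0_{\infty,2}}\le(1+C\tau)\|e^n\|_{B^0_{\infty,2}}+C\tau\|e^{n+1}\|_{B^0_{\infty,2}}+C\tau^2$ with $C=C(u_0,T)$; for $\tau$ small the $e^{n+1}$ term is absorbed, and since $e^0=0$ the discrete Gronwall inequality gives \eqref{1.9a}. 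For \eqref{1.9b} I would repeat the argument after weighting the block inequality by $2^{js}$; the commutator, pressure and consistency estimates at regularity level $s$ then need $u^n\in B^{s+1}_{\infty,2}$ and $\pa_tu\in B^{s+1}_{\infty,2}$, which is why $u_0\in B^{s+3}_{\infty,2}$ is assumed, with Theorem~\ref{Thm1.1}(ii) supplying the corresponding uniform bound on $\|u^n\|_{B^{s+3}_{\infty,2}}$.

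The main obstacle is the third step: in $B^0_{\infty,2}$ the advection term $u^n\cdot\na e^{n+1}$ must be handled without ever placing a derivative on the low‑regularity error. This forces the maximum‑principle treatment of the pure transport part $u^n\cdot\na\Delta_je^{n+1}$ together with the integration‑by‑parts rewriting of the Leray‑projection correction as $\na\De^{-1}(\na u^n:\na e^{n+1})$, and — the delicate point — one must make all resulting quantities $\ell^2_j$‑summable, whereas in the $B^0_{\infty,1}$ setting $\ell^1_j$‑summability (for which the standard commutator estimate involving only $\|\na u^n\|_{L^\infty}$ is enough) would do; the companion difficulty is the resolvent absorption of the viscous part of the consistency error, which is what keeps the regularity threshold at $s\ge3$ uniformly in $\nu\ge0$.
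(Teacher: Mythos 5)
Your overall architecture (block‑by‑block estimates on $\De_j e^{n+1}$, a transport maximum principle in place of the paper's $L^p$ energy method with $p\to\infty$, commutator and paraproduct estimates for the advection and pressure corrections, discrete Gr\"onwall) matches the paper's, and several of your ingredients are sound: the bound $\|\mathcal{L}_n^{-1}\|_{L^\infty\to L^\infty}\le 1$ is the same information the paper extracts from $\Lg u^n\cdot\na\De_j e^{n+1},|\De_j e^{n+1}|^{p-2}\De_j e^{n+1}\Rg=0$, your double‑divergence rewriting of the pressure correction is the div--curl structure behind the paper's Lemma~\ref{P} and Lemma~\ref{CI}, and the $\ell^2_j$ commutator bound $\lesssim\|u^n\|_{B^2_{\infty,2}}\|e^{n+1}\|_{B^0_{\infty,2}}$ is exactly what the paper uses. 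However, the central step of your argument --- the ``resolvent absorption'' of the viscous consistency error --- does not deliver the required per‑step accuracy, and this is a genuine gap. Writing $\tau\nu\De=I+\tau u^n\cdot\na-\mathcal{L}_n$ gives
\begin{equation*}
\mathcal{L}_n^{-1}\bigl(\tau\nu\De\,\De_j\Psi^{n+1}\bigr)=-\De_j\Psi^{n+1}+\mathcal{L}_n^{-1}\De_j\Psi^{n+1}+\tau\,\mathcal{L}_n^{-1}\bigl(u^n\cdot\na\De_j\Psi^{n+1}\bigr),
\end{equation*}
and the only bound available for the first two terms without reintroducing $\De$ is the triangle inequality, which yields $2\|\De_j\Psi^{n+1}\|_{L^\infty}$; after $\ell^2_j$ summation this is $O(\|\Psi^{n+1}\|_{B^0_{\infty,2}})=O(\tau)$ \emph{per time step}, not $O(\tau^2)$. (Trying to recover smallness via $\mathcal{L}_n^{-1}w-w=\mathcal{L}_n^{-1}(\tau\nu\De w-\tau u^n\cdot\na w)$ is circular, and interpolating the two available bounds only improves this to $O(\nu^{1/2}\tau^{3/2})$.) Accumulated over $n\sim T/\tau$ steps this gives a total error $O(1)$ (at best $O(\sqrt{\tau})$), not the claimed $O(\tau)$; your final recursion $\|e^{n+1}\|\le(1+C\tau)\|e^n\|+C\tau\|e^{n+1}\|+C\tau^2$ implicitly multiplies this contribution by an extra factor of $\tau$ that has already been consumed in the resolvent bound.

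The paper closes exactly this gap by a different mechanism, which is why its recursion is run on the \emph{squared} norm. The viscous consistency term $I'_{2,2}$ is paired by Young's inequality against the strictly negative dissipation $I'_{2,1}\le-\nu\tau 2^{2j}\|\De_j e^{n+1}\|^p_{L^p}$ produced by the scheme itself, leaving a remainder $\tfrac12\nu\tau^3 2^{2j}\|\pa_t\De_j u\|^2_{L^\infty_t L^p}\|\De_j e^{n+1}\|^{p-2}_{L^p}$. Dividing by $\|\De_j e^{n+1}\|^{p-2}_{L^p}$ (rather than $\|\De_j e^{n+1}\|^{p-1}_{L^p}$) and summing in $j$ turns this into an inequality of the form $\|e^{n+1}\|^2_{B^0_{\infty,2}}\le\|e^n\|^2_{B^0_{\infty,2}}+C\tau(\dots)+C\tau^3$, which after Gr\"onwall gives $\sup_n\|e^n\|^2_{B^0_{\infty,2}}\lesssim\tau^2$; the $\ell^1_j$ sum $\sum_j\nu^3\tau^3 2^{2j}\|\De_j\De u\|^2_{L^\infty}\sim\nu^3\tau^3\|u\|^2_{B^3_{\infty,2}}$ is precisely where the threshold $s\ge 3$ enters. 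This quadratic absorption into the error's own dissipation cannot be reproduced by a linear resolvent identity applied to $\Psi^{n+1}$, because $\Psi^{n+1}$ and $e^{n+1}$ are different functions. If you want to keep your framework, you would need to replace the resolvent step by this Young‑inequality absorption (and correspondingly square your block inequality before summing); a secondary, fixable issue is that your bound on $G^{n+1}$ requires $u\cdot\na\pa_t u\in B^0_{\infty,2}$, for which you should place the $\epsilon$‑loss of Lemma~\ref{PL2} on $u$ (using $u\in B^1_{\infty,1}$) rather than on $\pa_t u$, to stay within $u\in B^3_{\infty,2}$.
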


\begin{rem}
    The high regularity assumption $u_0\in B^4_{\infty,1}$ or  $B^3_{\infty,2}$ is needed only to deal with the viscosity $\nu\De u^{n+1}$ term in the scheme \eqref{1.3}. Indeed one only requires $u_0\in B^2_{\infty,1}$ or  $B^2_{\infty,2}$ if the incompressible Euler equations are to solved; we refer the readers to the details later in Section \ref{sec:Besov1} and \ref{sec:Besov2}.
\end{rem}

Throughout this paper, for any two (non-negative in particular) quantities $X$ and $Y$, we denote $X \lesssim Y$ if
$X \le C Y$ for some constant $C>0$. Similarly $X \gtrsim Y$ if $X
\ge CY$ for some $C>0$. We denote $X \sim Y$ if $X\lesssim Y$ and $Y
\lesssim X$. The dependence of the constant $C$ on
other parameters or constants are usually clear from the context and
we will often suppress  this dependence. We shall denote
$X \lesssim_{Z_1, Z_2,\cdots,Z_k} Y$
if $X \le CY$ and the constant $C$ depends on the quantities $Z_1,\cdots, Z_k$. We define $[A,B]$ to be $AB-BA$, namely the usual commutator.


The presenting paper is organized as follows. In Section~\ref{sec2} we introduce the Little-wood Paley theory and key preliminaries including several useful lemmas. In Section~\ref{sec:stab} we prove the stability Theorem~\ref{Thm1.1}; the $B^0_{\infty,1}$ and $B^0_{\infty,2}$ errors are estimated in Section~\ref{sec:Besov1} and Section~\ref{sec:Besov2}. We give more details about the full discretization of the scheme by discussing more on the finite element methods and Fourier spectral methods in Section~\ref{sec:full}. We will provide numerical evidence in Section~\ref{sec7}.


\section{Littlewood-Paley theory and Besov spaces}\label{sec2}
In this section we give a biref introduction to the Littlewood-Paley theory and Besov spaces that will be often times used. We first define the Littlewood-Paley dyadic decomposition on $\mathbb{T}^{d}$. Assume $\chi: [0,+\infty)\to [0,1]$ is smooth, monotone, and satisfying
 \begin{equation*}
 \chi(x)=
 \begin{cases} 
 1,\quad 0\leq x \leq\frac{1}{2};\\
 \frac{1}{2},\quad x=\frac{3}{4};\\
 0,\quad x\geq 1.
 \end{cases}
 \end{equation*}
 We then define $\varphi(x)=\chi(x)-\chi(2x)$, then
 \begin{equation*}
 \chi(|x|)+\sum_{j\geq 1}\varphi\left(\frac{|x|}{2^{j}}\right)\equiv 1,\quad \forall x\in\mathbb{R}^{d}.
 \end{equation*}
 \begin{defi}
      Assume $ f $ is a distribution on $\mathbb{T}^{d}$ with
 \begin{equation*}
 f(x)= \frac{1}{(2\pi)^d}\sum_{k\in\mathbb{Z}^{d}}\hat{f}(k)e^{2\pi ik\cdot x}.
 \end{equation*}
 We define the Littlewood-Paley operators as follows:
 \begin{equation}\label{aeq2}
 \begin{cases}
 \Delta_{0}f=\sum_{k\in\mathbb{Z}^{d}}\hat{f}(k)\chi\left(|k|\right)e^{2\pi ik\cdot x}=\hat{f}(0);\\
 \Delta_{j}f=\sum_{k\in\mathbb{Z}^{d}}\hat{f}(k)\varphi\left(\frac{|k|}{2^{j}}\right)e^{2\pi ik\cdot x},\ j\geq 1;\\
 S_j f =\De_0 f+\sum_{1\le l\le j}\De_l f,\ j\ge 1.
 \end{cases}
 \end{equation}
 Hence the dyadic decomposition of $f$ is given by
 \begin{equation*}
 f=\sum_{j\geq 0}\Delta_{j}f.
 \end{equation*}
In particular, throughout this paper if we further assume $f$ is of zero mean, then the Littlewood-Paley decomposition of $f$ can be decomposed as below:
    \begin{align}\label{defLP2}
        f = \sum_{j\ge 1}\De_j f.
    \end{align}
\end{defi}

 \begin{defi}
Let $s\in\mathbb{R},\ 1\leq p,r\leq\infty.$ The inhomogeneous Besov spaces $B^s_{p,r}$ is defined by
\begin{equation}\label{defBesov}
    B^s_{p,r}=\{u\in S':\|u\|_{B^s_{p,r}}=\Big\|(2^{js}\|\Delta_j u\|_{L^p})_j \Big\|_{l^r(\mathbb{Z})}<\infty\}.
\end{equation}
Moreover, the homogeneous Besov norm and inhomogeneous Besov norm coincide when the function has zero mean.
\end{defi}

The Bernstein’s inequality is given below.
\begin{lem}[Bernstein's inequality]\label{Berns}
\ 
\begin{description}
    \item[(i)] Let $p \in [1,\infty]$ and $s \in\R$. Then for any $j\in 
\Z$, we have
\begin{equation*}
 C\|\La^s \De_j f\|_{L^p}\le 2^{js}\|\De_j f\|_{L^p}\le C'    \|\La^s \De_j f\|_{L^p}
\end{equation*}
with some constants $C$ and $C'$ depending only on $p$ and $s$.
\item[(ii)] Moreover, for $1\le p\le  q \le \infty$, we have
\begin{equation*}
    \| \De_j f\|_{L^q}\ls 2^{jd(\frac{1}{p}-\frac{1}{q})}\|\De_j f\|_{L^p},
\end{equation*}
where the constant depends only on $p$ and $q$.
\end{description}
\end{lem}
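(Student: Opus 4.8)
The plan is to derive both parts from a single observation: a distribution whose Fourier spectrum lies in a ball or annulus of radius $\ls 2^j$ is reproduced by a smooth frequency cutoff at scale $2^j$, and such a cutoff is convolution against an explicitly rescaled kernel. Concretely, I would fix $\Phi\in C_c^\infty(\R^d)$ — for part (ii) with $\Phi\equiv1$ on $\{|\xi|\le1\}$ and $\supp\Phi\subset\{|\xi|\le2\}$, and for part (i) with $\Phi\equiv1$ on $\{\tfrac14\le|\xi|\le1\}$ and $\supp\Phi\subset\{\tfrac18\le|\xi|\le2\}$. Since $\widehat{\De_jf}(k)\neq0$ forces $|k|\le 2^j$ (and $2^{j-2}\le|k|\le2^j$ when $j\ge1$), in both cases $\De_jf=\Phi(D/2^j)\De_jf$, where $\Phi(D/2^j)$ denotes the Fourier multiplier with symbol $\Phi(\cdot/2^j)$, i.e.\ $\Phi(D/2^j)g=k_j\ast g$ with periodic kernel $k_j(x)=\sum_{k\in\Z^d}\Phi(k/2^j)e^{2\pi ik\cdot x}$. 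By Poisson summation $k_j=\sum_{n\in\Z^d}\Psi_j(\cdot+n)$ with $\Psi_j=2^{jd}\Psi(2^j\cdot)$ and $\Psi=\cF^{-1}\Phi$ a Schwartz function, so that for every $r\in[1,\infty]$,
\begin{equation*}
\|k_j\|_{L^r(\T^d)}\le\sum_{n\in\Z^d}\|\Psi_j\|_{L^r(n+[0,1)^d)}\ls 2^{jd(1-1/r)},
\end{equation*}
the bound coming from the $n=0$ term $\|\Psi_j\|_{L^r(\R^d)}=2^{jd(1-1/r)}\|\Psi\|_{L^r}$ together with a rapidly decaying tail.

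Granting this, part (ii) follows at once: choosing $r$ by $\tfrac1r=1-(\tfrac1p-\tfrac1q)\in[0,1]$ and applying Young's inequality on $\T^d$ gives $\|\De_jf\|_{L^q}=\|k_j\ast\De_jf\|_{L^q}\le\|k_j\|_{L^r}\|\De_jf\|_{L^p}\ls2^{jd(1/p-1/q)}\|\De_jf\|_{L^p}$. For part (i) with $j\ge1$, since $\Phi(k/2^j)=1$ on the annulus $\{2^{j-2}\le|k|\le2^j\}$, I would write $\La^s\De_jf=\La^s\Phi(D/2^j)\De_jf=2^{js}\Theta(D/2^j)\De_jf$ with $\Theta(\xi)=|\xi|^s\Phi(\xi)\in C_c^\infty(\R^d)$ — smooth precisely because $\Phi$ vanishes near the origin — so that the corresponding periodic kernel has $L^1(\T^d)$-norm bounded uniformly in $j$ (the estimate above with $r=1$), whence $\|\La^s\De_jf\|_{L^p}\le C_s2^{js}\|\De_jf\|_{L^p}$, which is one of the two asserted inequalities. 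The reverse inequality $2^{js}\|\De_jf\|_{L^p}\le C_{-s}\|\La^s\De_jf\|_{L^p}$ then follows by applying the same bound with $-s$ in place of $s$ to $\La^s\De_jf$ (whose spectrum still lies in the annulus) and using $\La^{-s}\La^s\De_jf=\De_jf$. The block $\De_0f$ is the mean of $f$, which is zero wherever the lemma is used in this paper, so only $j\ge1$ is relevant and the vanishing of $\La^s$ at zero frequency causes no difficulty.

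I expect the only genuinely delicate point to be the periodic framework: on $\T^d$ the kernel of $\Phi(D/2^j)$ is the \emph{periodization} of the Euclidean kernel rather than the Euclidean kernel itself, and securing the sharp scaling $\|k_j\|_{L^r(\T^d)}\ls2^{jd(1-1/r)}$ uniformly in $j$ is exactly where the Schwartz decay of $\Psi$ enters; that is the step I would write out in full, the rest being routine Poisson summation and Young's inequality. On $\R^d$ the statements are textbook, and one could alternatively just cite \cite{BCD11}.
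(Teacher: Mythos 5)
Your argument is correct and is essentially the standard proof of Bernstein's inequality (smooth cutoff reproducing the spectrum, Poisson summation to control the periodized kernel, Young's inequality for (ii), and the $|\xi|^s\Phi(\xi)$ symbol trick for (i)); the paper itself gives no proof of Lemma~\ref{Berns}, deferring entirely to \cite{BCD11} and \cite{Li19}, and what you wrote is the argument from those references adapted to $\T^d$. The one point you rightly single out — that on the torus the kernel is the periodization of the rescaled Euclidean kernel and the uniform bound $\|k_j\|_{L^r(\T^d)}\lesssim 2^{jd(1-1/r)}$ rests on the Schwartz decay of $\Psi$ — is indeed the only step that goes beyond the whole-space textbook statement, and your treatment of it is sound.
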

Let $u$ and $v$ be tempered distributions in $\mathcal{S}'$, then the non-homogeneous $\rm paraproduct$ of $v$ by $u$ is defined as follows:
\begin{align*}
    T_u v = \sum_{j} S_{j-1}u\Delta_j v,
\end{align*}
and the non-homogeneous $\rm remainder$ of $u$ and $v$ is defined by
\begin{align*}
    R(u,v) = \sum_{|k-j|\leq1} \Delta_{k} u \Delta_{j} v\triangleq\sum_{k\geq0} \Delta_{k}u\tilde{\Delta}_k v.
\end{align*}
At least formally, we obtain the so-called Bony's decomposition:
\begin{align*}
    uv=T_u v + T_v u + R(u,v).
\end{align*}

\begin{lem}\label{T}\cite{BCD11}
    For any $(s,t)\in\mathbb{R}\times(-\infty,0)$ and $(p,p_1,p_2,r,r_1,r_2)\in[1,\infty]^6$, there exists a constant $C$ such that
    \begin{align*}
        \|T_u v\|_{B^s_{p,r}} \leq C^{1+|s|} \|u\|_{L^{p_1}}\|v\|_{B^s_{p_2,r}},
    \end{align*}
    with $(u,v)\in L^{p_1}\times B^s_{p_2,r}$ and $\frac{1}{p}=\frac{1}{p_1}+\frac{1}{p_2}$. Moreover, we have
    \begin{align*}
        \|T_u v\|_{B^{s+t}_{p,r}} \leq \frac{C^{1+|s+t|}}{-t} \|u\|_{B^{t}_{\infty,r_1}}\|v\|_{B^s_{p,r_2}},
    \end{align*}
    with $(u,v) \in B^{t}_{\infty,r_1}\times B^s_{p,r_2}$ and $\frac{1}{r}=\min\{1,\frac{1}{r_1}+\frac{1}{r_2}\}$.
\end{lem}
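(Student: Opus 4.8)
The plan is to prove both inequalities directly from the definition $T_u v=\sum_j S_{j-1}u\,\Delta_j v$ and the Littlewood--Paley characterization \eqref{defBesov} of the Besov norm, the whole argument resting on three ingredients: (i) the spectral support of each summand, (ii) Bernstein's inequality (Lemma~\ref{Berns}) together with H\"older, and (iii) a discrete Young (convolution) inequality to recombine the blocks; the statement is classical and may alternatively just be cited from \cite{BCD11}. First I would record the localization: since $\widehat{S_{j-1}u}$ is supported in a ball $\{|\xi|\lesssim 2^{j-1}\}$ while $\widehat{\Delta_j v}$ lives in an annulus $\{|\xi|\sim 2^j\}$, the product $S_{j-1}u\,\Delta_j v$ has Fourier spectrum in a fixed annulus $\{|\xi|\sim 2^j\}$, so there is a universal $N_0$ with $\Delta_k(S_{j-1}u\,\Delta_j v)=0$ whenever $|j-k|>N_0$; hence $\Delta_k T_u v=\sum_{|j-k|\le N_0}\Delta_k(S_{j-1}u\,\Delta_j v)$, a sum of boundedly many terms.

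For the first estimate, the uniform $L^p$-boundedness of the blocks $\Delta_k$ together with H\"older ($\tfrac1p=\tfrac1{p_1}+\tfrac1{p_2}$) gives $\|\Delta_k(S_{j-1}u\,\Delta_j v)\|_{L^p}\lesssim\|S_{j-1}u\|_{L^{p_1}}\|\Delta_j v\|_{L^{p_2}}\lesssim\|u\|_{L^{p_1}}\|\Delta_j v\|_{L^{p_2}}$. Multiplying by $2^{ks}$ and using $|k-j|\le N_0$ to write $2^{ks}\le 2^{N_0|s|}2^{js}$, one gets $2^{ks}\|\Delta_k T_u v\|_{L^p}\lesssim 2^{N_0|s|}\|u\|_{L^{p_1}}\sum_{|j-k|\le N_0}2^{js}\|\Delta_j v\|_{L^{p_2}}$; taking the $\ell^r_k$-norm is a convolution against the $\ell^1$ kernel $\mathbf{1}_{[-N_0,N_0]}$, which costs only a factor $2N_0{+}1$, and $2^{N_0|s|}(2N_0{+}1)\le C^{1+|s|}$, yielding $\|T_u v\|_{B^s_{p,r}}\le C^{1+|s|}\|u\|_{L^{p_1}}\|v\|_{B^s_{p_2,r}}$.

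For the second estimate ($t<0$), the same localization gives $2^{k(s+t)}\|\Delta_k T_u v\|_{L^p}\lesssim 2^{N_0|s+t|}\sum_{|j-k|\le N_0}f_j$ with $f_j:=2^{j(s+t)}\|S_{j-1}u\|_{L^\infty}\|\Delta_j v\|_{L^p}$, so it suffices to bound $\|(f_j)_j\|_{\ell^r}$. Set $a_l:=2^{lt}\|\Delta_l u\|_{L^\infty}$ and $b_j:=2^{js}\|\Delta_j v\|_{L^p}$, so $\|(a_l)\|_{\ell^{r_1}}=\|u\|_{B^t_{\infty,r_1}}$ and $\|(b_j)\|_{\ell^{r_2}}=\|v\|_{B^s_{p,r_2}}$. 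From $\|S_{j-1}u\|_{L^\infty}\le\sum_{l\le j-2}\|\Delta_l u\|_{L^\infty}=\sum_{l\le j-2}2^{-lt}a_l$ one checks $f_j\le b_j\,(\kappa*a)_j$ with $\kappa_m=2^{mt}\mathbf{1}_{\{m\ge 2\}}$, and here $t<0$ makes $\kappa\in\ell^1$ with $\|\kappa\|_{\ell^1}=\tfrac{2^{2t}}{1-2^t}\le\tfrac{C}{-t}$. Young's inequality then gives $\|\kappa*a\|_{\ell^{\rho}}\le\tfrac{C}{-t}\|a\|_{\ell^{\rho}}\le\tfrac{C}{-t}\|a\|_{\ell^{r_1}}$ for any $\rho\ge r_1$, and H\"older in $\ell$ against $(b_j)\in\ell^{r_2}$ with the pairing $\tfrac1r=\tfrac1{r_2}+\tfrac1\rho$ is admissible precisely when $\tfrac1r\le\tfrac1{r_1}+\tfrac1{r_2}$ and $\tfrac1r\le 1$, i.e.\ exactly for $\tfrac1r=\min\{1,\tfrac1{r_1}+\tfrac1{r_2}\}$; this yields $\|(f_j)\|_{\ell^r}\le\tfrac{C}{-t}\|u\|_{B^t_{\infty,r_1}}\|v\|_{B^s_{p,r_2}}$, and reinserting $2^{N_0|s+t|}(2N_0{+}1)\le C^{1+|s+t|}$ completes the proof.

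Nothing here is deep, but I expect two points to require care. The first is keeping the low-frequency sum $\sum_{l\le j-2}$ \emph{inside} the $\ell^r$-norm, treated as the convolution $\kappa*a$: the crude alternative $\|S_{j-1}u\|_{L^\infty}\lesssim\tfrac{2^{-jt}}{-t}\|u\|_{B^t_{\infty,r_1}}$ would only produce the weaker estimate with exponent $r_2$ rather than the sharp $\min\{1,\tfrac1{r_1}+\tfrac1{r_2}\}^{-1}$, and in particular would not capture the gain $r=1$ when $\tfrac1{r_1}+\tfrac1{r_2}>1$. The second is verifying that the geometric constant $\tfrac{2^{2t}}{1-2^t}$ is genuinely $\lesssim\tfrac1{-t}$ uniformly in $t<0$ (equivalently $y^2/(1-y)\lesssim 1/\log(1/y)$ on $(0,1)$), which is where the advertised $\tfrac1{-t}$ blow-up comes from and why the hypothesis $t<0$ is essential.
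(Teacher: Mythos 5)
Your proof is correct and is essentially the standard argument from Bahouri--Chemin--Danchin (the continuity theorem for the paraproduct), which is exactly the source the paper cites in lieu of giving a proof: spectral localization of $S_{j-1}u\,\Delta_j v$ in dyadic annuli, H\"older plus the uniform $L^{p_1}$-boundedness of $S_{j-1}$ for the first bound, and the pointwise bound $f_j\le b_j(\kappa\ast a)_j$ with $\|\kappa\|_{\ell^1}\lesssim 1/(-t)$ followed by Young and H\"older (with the $\ell^{r_1}\hookrightarrow\ell^{\rho}$ embedding when $\tfrac1{r_1}+\tfrac1{r_2}>1$) for the second, and you rightly flag that keeping the low-frequency sum inside the $\ell^r$-norm is what produces the sharp exponent $\tfrac1r=\min\{1,\tfrac1{r_1}+\tfrac1{r_2}\}$. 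The only point to double-check is the annulus (rather than mere ball) localization of $S_{j-1}u\,\Delta_j v$, which you correctly identify as essential for $s\le 0$: it requires the spectral support of $S_{j-1}u$ to lie strictly inside the inner radius of the annulus carrying $\widehat{\Delta_j v}$, and with the paper's particular choice of $\chi$ and $\varphi$ this would force replacing $S_{j-1}$ by $S_{j-3}$ in the definition of $T_uv$ --- a harmless notational adjustment that the conventions of the cited reference already build in.
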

\begin{lem}\label{R}\cite{BCD11}
    A constant $C$ exists which satisfies the following inequalities. Let $(s_1,s_2)\in\mathbb{R}^2$ and $(p_1,p_2,r_1,r_2) \in [1,\infty]^4$. Assume that $$\frac{1}{p}=\frac{1}{p_1}+\frac{1}{p_2}\leq1~~~~~\text{and}~~~~~\frac{1}{r}=\frac{1}{r_1}+\frac{1}{r_2}\leq1.$$
    If $s_1+s_2>0$, for any $(u,v)\in B^{s_1}_{p_1,r_1}\times B^{s_2}_{p_2,r_2}$, then we have
    \begin{align*}
        \|R(u,v)\|_{B^{s_1+s_2}_{p,r}} \leq \frac{C^{1+|s_1+s_2|}}{s_1+s_2} \|u\|_{B^{s_1}_{p_1,r_1}}\|v\|_{B^{s_2}_{p_2,r_2}}.
    \end{align*}
    If $r=1$ and $s_1+s_2=0$, for any $(u,v)\in B^{s_1}_{p_1,r_1}\times B^{s_2}_{p_2,r_2}$, then we have
    \begin{align*}
        \|R(u,v)\|_{B^{0}_{p,\infty}} \leq C \|u\|_{B^{s_1}_{p_1,r_1}}\|v\|_{B^{s_2}_{p_2,r_2}}.
    \end{align*}
\end{lem}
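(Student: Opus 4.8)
The plan is to run the classical Bony-type argument for the remainder operator, which reduces everything to a spectral-localization observation plus an elementary sequence-convolution estimate. First I would recall that $R(u,v)=\sum_{k\ge 0}\De_k u\,\wtd{\De}_k v$ with $\wtd{\De}_k=\De_{k-1}+\De_k+\De_{k+1}$, and note that each summand $\De_k u\,\wtd{\De}_k v$ has Fourier spectrum contained in a \emph{ball} of radius $\ls 2^{k}$. Consequently there is a fixed integer $N_0$, depending only on the cut-off functions $\chi$ and $\varphi$, such that $\De_j(\De_k u\,\wtd{\De}_k v)=0$ whenever $k<j-N_0$, so that
\[
\De_j R(u,v)=\sum_{k\ge j-N_0}\De_j\bigl(\De_k u\,\wtd{\De}_k v\bigr).
\]

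Next I would estimate each piece. Using that $\De_j$ is bounded on $L^p$ uniformly in $j$ together with Hölder's inequality with $\tfrac1p=\tfrac1{p_1}+\tfrac1{p_2}$, one gets $\|\De_j(\De_k u\,\wtd{\De}_k v)\|_{L^p}\ls\|\De_k u\|_{L^{p_1}}\|\wtd{\De}_k v\|_{L^{p_2}}$. Setting $a_k=2^{ks_1}\|\De_k u\|_{L^{p_1}}$ and $b_k=2^{ks_2}\|\wtd{\De}_k v\|_{L^{p_2}}$, we have $\|(a_k)\|_{\ell^{r_1}}\ls\|u\|_{B^{s_1}_{p_1,r_1}}$ and $\|(b_k)\|_{\ell^{r_2}}\ls\|v\|_{B^{s_2}_{p_2,r_2}}$ (the latter because $\wtd{\De}_k$ is a sum of three Littlewood--Paley blocks of comparable frequency). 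Multiplying the pointwise bound by $2^{j(s_1+s_2)}$ and factoring $2^{j(s_1+s_2)}=2^{(j-k)(s_1+s_2)}\,2^{ks_1}\,2^{ks_2}$ then yields
\[
2^{j(s_1+s_2)}\|\De_j R(u,v)\|_{L^p}\ls\sum_{k\ge j-N_0}2^{(j-k)(s_1+s_2)}a_kb_k=(c\ast e)_j,
\]
with $e_k=a_kb_k$ and $c_\ell=2^{\ell(s_1+s_2)}\mathbf{1}_{\{\ell\le N_0\}}$.

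For the case $s_1+s_2>0$ I would observe that $c\in\ell^1$ with $\|c\|_{\ell^1}=2^{N_0(s_1+s_2)}/(1-2^{-(s_1+s_2)})\le C^{1+|s_1+s_2|}/(s_1+s_2)$ by a geometric-series estimate, and then combine Young's convolution inequality for sequences with Hölder's inequality in the index $k$ (using $\tfrac1r=\tfrac1{r_1}+\tfrac1{r_2}$) to bound the $\ell^r$-norm of the left-hand side by $\|c\|_{\ell^1}\|(a_k)\|_{\ell^{r_1}}\|(b_k)\|_{\ell^{r_2}}$, which is the claimed inequality. For the endpoint $s_1+s_2=0$ with $r=1$, so that $\tfrac1{r_1}+\tfrac1{r_2}=1$, the factor $2^{(j-k)(s_1+s_2)}$ equals $1$ and one simply bounds $\sup_j\|\De_j R(u,v)\|_{L^p}\ls\sup_j\sum_{k\ge j-N_0}a_kb_k\le\sum_k a_kb_k\le\|(a_k)\|_{\ell^{r_1}}\|(b_k)\|_{\ell^{r_2}}$ by Hölder, giving the $B^0_{p,\infty}$ bound.

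The one genuinely delicate point is the spectral localization in the first step: in contrast with the paraproduct, whose building blocks are supported in dyadic \emph{annuli}, the remainder blocks are supported in \emph{balls}, so the high--high interaction can only be summed from below ($k\gtrsim j$), and this is precisely why convergence requires $s_1+s_2>0$ (or the $\ell^1$ endpoint). Once this is in place, the rest — the Hölder bound on each block and tracking the constant through the geometric series — is routine.
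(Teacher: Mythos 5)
Your proof is correct and is essentially the standard argument from Bahouri--Chemin--Danchin, which is precisely the reference the paper defers to for this lemma (the paper gives no proof beyond citing \cite{BCD11}). The key points --- the ball (rather than annulus) spectral localization of the remainder blocks forcing summation over $k\gtrsim j$, H\"older's inequality in $L^p$ and in the dyadic index, and the geometric-series bound $\|c\|_{\ell^1}\le C^{1+|s_1+s_2|}/(s_1+s_2)$ with the $\ell^1$--$\ell^\infty$ endpoint when $s_1+s_2=0$ --- are all handled correctly.
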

The following commutator estimates play very important roles in the presenting manuscript.


\begin{lem}[Besov commutator estimate]\label{CI}
Let $1\leq p\leq\infty,\ 1\leq r\leq\infty$ and $\rm div~u=0$. Define $I_j=[u\cdot\nabla, \LP\Delta_j]f$. Then there exists a constant $C$ such that if $\sigma>0$ we have
$$\Big\|(2^{j\sigma}\|I_j\|_{L^p_{x}})_j\Big\|_{l^r(\mathbb{Z})}\leq C(\|\nabla u\|_{L^{\infty}}\|f\|_{B^{\sigma}_{p,r}}+\|\nabla u\|_{B^{\sigma-1}_{p,r}}\|\nabla_{x}f\|_{L^{\infty}_{x}});$$
if $\sigma>1+\frac d p$ or $\sigma=1+\frac d p,~r=1$, we have
$$\Big\|(2^{j\sigma}\|I_j\|_{L^p_{x}})_j\Big\|_{l^r(\mathbb{Z})}\leq C\|\nabla u\|_{B^{\sigma-1}_{p,r}}\|f\|_{B^\sigma_{p,r}};$$
if $\sigma=1+\frac d p,~r>1$, we have
$$\Big\|(2^{j\sigma}\|I_j\|_{L^p_{x}})_j\Big\|_{l^r(\mathbb{Z})}\leq C\|\nabla u\|_{B^{\sigma}_{p,r}}\|f\|_{B^\sigma_{p,r}};$$
if $\sigma<1+\frac d p$, we have
$$\Big\|(2^{j\sigma}\|I_j\|_{L^p_{x}})_j\Big\|_{l^r(\mathbb{Z})}\leq C\|\nabla u\|_{B^{\frac {d} {p}}_{p,r}\cap L^{\infty}}\|f\|_{B^\sigma_{p,r}}.$$
\end{lem}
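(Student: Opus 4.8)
The estimate is a variant of the classical commutator inequality for $[u\cdot\na,\De_j]$ (cf.\ \cite{BCD11} and the references therein on the Euler equations in Besov spaces), the only new ingredient being the Leray projection, and the plan is to carry $\LP\De_j$ through the argument as a single Littlewood--Paley--type operator. Since $\LP$ is the Fourier multiplier with symbol $\delta_{ab}-\xi_a\xi_b/|\xi|^2$, smooth and homogeneous of degree $0$ away from the origin, for $j\ge1$ the operator $\LP\De_j$ is convolution against a matrix kernel $\Phi_j=(\Phi_j^{ab})$, $\Phi_j(x)=2^{jd}\Phi(2^jx)$, with $\Phi\in\mathcal S$ fixed and $\widehat\Phi$ supported in an annulus $\{|\xi|\sim1\}$; hence $\|\Phi_j\|_{L^1}\lesssim1$, $\|\,|x|\,\Phi_j\,\|_{L^1}\lesssim2^{-j}$, $\|\na\Phi_j\|_{L^1}\lesssim2^j$, $\|\,|x|\,\na\Phi_j\,\|_{L^1}\lesssim1$ uniformly in $j$, and $\LP\De_j$ commutes with $\pa_k$ and with all the $\De_\ell,S_\ell$. (For zero-mean data only $j\ge1$ occurs, so the symbol is never evaluated at the origin.) First I would record, by writing $\LP\De_j$ in kernel form and integrating by parts once with $\div u=0$, the master identity
\begin{equation*}
I_j^a(x)=-\sum_{k,b}\int \pa_k\Phi_j^{ab}(x-y)\,\big(u^k(y)-u^k(x)\big)\,f^b(y)\,dy,
\end{equation*}
the exact analogue of the classical one.

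Next I would insert Bony's decomposition $u^k\pa_kf=T_{u^k}\pa_kf+T_{\pa_kf}u^k+R(u^k,\pa_kf)$ (sum over $k$ understood) and split
\begin{equation*}
I_j=\underbrace{[T_{u^k},\LP\De_j]\pa_kf}_{I_j^{(1)}}+\underbrace{T_{\pa_k(\LP\De_jf)}u^k-\LP\De_jT_{\pa_kf}u^k}_{I_j^{(2)}}+\underbrace{R(u^k,\pa_k\LP\De_jf)-\LP\De_jR(u^k,\pa_kf)}_{I_j^{(3)}}.
\end{equation*}
Since $\LP\De_j$ commutes with $\De_{j'}$, one has $I_j^{(1)}=\sum_{|j'-j|\le N_0}[S_{j'-1}u^k,\LP\De_j](\pa_k\De_{j'}f)$, a bounded sum of genuine commutators; writing each in kernel form and combining the mean value bound $|S_{j'-1}u^k(x)-S_{j'-1}u^k(y)|\le|x-y|\|\na u\|_{L^\infty}$ with $\|\,|x|\,\Phi_j\,\|_{L^1}\lesssim2^{-j}$, Bernstein's inequality (Lemma~\ref{Berns}) and Young's inequality gives $2^{j\sigma}\|I_j^{(1)}\|_{L^p}\lesssim\|\na u\|_{L^\infty}\sum_{|j'-j|\le N_0}2^{j'\sigma}\|\De_{j'}f\|_{L^p}$, hence $\lesssim\|\na u\|_{L^\infty}\|f\|_{B^\sigma_{p,r}}$ after summing in $\ell^r$, for every $\sigma$. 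For $I_j^{(3)}$ one uses $\div u=0$ to move the derivative out of the remainder, $\sum_kR(u^k,\pa_kg)=\sum_k\pa_kR(u^k,g)$, together with the frequency localizations ($\widetilde{\De}_\ell(\LP\De_jf)\ne0$ only for $\ell\sim j$, whereas $\LP\De_jR(u^k,f)$ needs $\ell\gtrsim j$) and $\|\De_\ell u\|_{L^\infty}\lesssim2^{-\ell}\|\na u\|_{L^\infty}$; summing the resulting geometric series (convergent as $\sigma>-1$) again gives $\lesssim\|\na u\|_{L^\infty}\|f\|_{B^\sigma_{p,r}}$. The second contribution comes only from $I_j^{(2)}$: the first paraproduct constrains its index to $\gtrsim j$ and the second to $\sim j$, and using $\|S_{j''-1}(\pa_k\LP\De_jf)\|_{L^\infty},\|S_{j''-1}\pa_kf\|_{L^\infty}\lesssim\|\na f\|_{L^\infty}$ together with $\|\De_{j''}u\|_{L^p}\sim2^{-j''}\|\De_{j''}\na u\|_{L^p}$, the relevant geometric sum converges precisely for $\sigma>0$ and yields $\lesssim\|\na u\|_{B^{\sigma-1}_{p,r}}\|\na f\|_{L^\infty}$. (The bounds for $I_j^{(2)},I_j^{(3)}$ can equivalently be read off from the continuity of the paraproduct and remainder operators, Lemmas~\ref{T} and~\ref{R}, applied to the frequency-truncated pieces.) Adding the three estimates gives the first inequality.

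The other three inequalities follow by rebalancing. For $\sigma>1+\tfrac dp$, or $\sigma=1+\tfrac dp$ with $r=1$, the embeddings $B^{\sigma-1}_{p,r}\hookrightarrow L^\infty$ and $B^{\sigma}_{p,r}\hookrightarrow W^{1,\infty}$ absorb $\|\na u\|_{L^\infty}$ and $\|\na f\|_{L^\infty}$, collapsing the first inequality to $\|\na u\|_{B^{\sigma-1}_{p,r}}\|f\|_{B^\sigma_{p,r}}$. For $\sigma<1+\tfrac dp$ the hypothesis already supplies $\|\na u\|_{L^\infty}$, so $I_j^{(1)},I_j^{(3)}$ are handled as before; for $I_j^{(2)}$, where $\na f$ is no longer in $L^\infty$, one re-runs the paraproduct bound with Bernstein on the $f$-factor, $\|\pa_k\LP\De_jf\|_{L^\infty}\lesssim2^{j(1+d/p)}\|\widetilde{\De}_jf\|_{L^p}$, distributing the dyadic weights so that the $f$-part reconstructs $2^{j\sigma}\|\widetilde{\De}_jf\|_{L^p}$ while the $u$-part is summed against $\|\na u\|_{B^{d/p}_{p,r}}$; this yields $\|\na u\|_{B^{d/p}_{p,r}\cap L^\infty}\|f\|_{B^\sigma_{p,r}}$. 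Finally the endpoint $\sigma=1+\tfrac dp$, $r>1$, where $B^\sigma_{p,r}$ just misses $W^{1,\infty}$, is treated by the same Bernstein device but now summing the $u$-factor in $I_j^{(2)}$ against $\|\na u\|_{B^\sigma_{p,r}}$, and using $B^\sigma_{p,r}\hookrightarrow B^{d/p}_{p,r}\cap L^\infty$ for $I_j^{(1)},I_j^{(3)}$; the loss of one derivative on $u$ here is genuine, which is why the statement isolates this case.

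I expect the only genuinely delicate points to be this last endpoint and the careful accounting of dyadic weights in $I_j^{(2)}$; the rest is essentially a line-by-line rerun of the classical commutator argument, legitimized by $\LP\De_j$ being convolution against a Schwartz kernel with Fourier support in $\{|\xi|\sim2^j\}$ and with the classical dyadic scaling of $\|\Phi_j\|_{L^1}$, $\|\,|x|\,\Phi_j\,\|_{L^1}$, $\|\na\Phi_j\|_{L^1}$ and $\|\,|x|\,\na\Phi_j\,\|_{L^1}$. A further subtlety worth flagging is that, in contrast with $1<p<\infty$, the projection $\LP$ is \emph{not} bounded on $L^\infty$ (nor on $L^1$), so one cannot simply split $I_j=\LP[u\cdot\na,\De_j]f+[u\cdot\na,\LP]\De_jf$ and invoke the classical estimate for the first summand; this is precisely why $\LP\De_j$ must be kept together as a single frequency-localized operator throughout.
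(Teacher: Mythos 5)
Your proposal is correct and follows essentially the route the paper intends: the paper gives no proof of Lemma~\ref{CI} beyond citing \cite{BCD11} and \cite{Li19}, and your argument is precisely the classical Bony-decomposition commutator estimate from those references, carried over with $\LP\Delta_j$ treated as a single frequency-localized convolution operator. Your closing observation --- that $\LP$ alone is unbounded on $L^\infty$, so one cannot reduce to the cited $[u\cdot\nabla,\Delta_j]$ estimate by peeling off the Leray projector, and must instead rerun the argument with the kernel of $\LP\Delta_j$ --- is exactly the detail the bare citation leaves implicit, and it is handled correctly.
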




\begin{proof}[Proof of Lemma~\ref{Berns}-Lemma~\ref{CI}]
    We refer the readers to \cite{BCD11} and \cite{Li19} for the proof. 
\end{proof}
We have the following product rules.
\begin{lem}[Product rules]\label{PL}
For nonlinear terms that satisfy the div-curl structure, i.e. $\div u=0$, the following $B^0_{\infty,1}$ estimate holds 
$$ \|u\cdot \nabla v\|_{B^0_{\infty,1}}\leq C\|u\|_{B^0_{\infty,1}}\|v\|_{B^1_{\infty,1}},$$
for some absolute constant $C>0$.
\end{lem}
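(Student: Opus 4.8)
The plan is to use Bony's decomposition on the product $u \cdot \nabla v$, writing each component as $u^k \partial_k v$ and decomposing $u^k \partial_k v = T_{u^k}(\partial_k v) + T_{\partial_k v}(u^k) + R(u^k, \partial_k v)$, then summing over $k$ and exploiting the divergence-free condition $\partial_k u^k = 0$ to rewrite the remainder in conservative form. I would estimate the three pieces separately in $B^0_{\infty,1}$.

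First, for the paraproduct $T_{u^k}(\partial_k v)$: by the first estimate in Lemma~\ref{T} with $p=p_1=p_2=\infty$, $s=0$, $r=1$, we get $\|T_{u^k}(\partial_k v)\|_{B^0_{\infty,1}} \lesssim \|u^k\|_{L^\infty}\|\partial_k v\|_{B^0_{\infty,1}} \lesssim \|u\|_{B^0_{\infty,1}}\|v\|_{B^1_{\infty,1}}$, using $\|u\|_{L^\infty}\lesssim\|u\|_{B^0_{\infty,1}}$ (since $B^0_{\infty,1}\hookrightarrow L^\infty$) and $\|\partial_k v\|_{B^0_{\infty,1}}\lesssim\|v\|_{B^1_{\infty,1}}$. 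Second, for the paraproduct $T_{\partial_k v}(u^k)$: I would use the second estimate in Lemma~\ref{T} with $t=-1$, $s=1$, $r=r_2=1$, $r_1=\infty$, so $\|T_{\partial_k v}(u^k)\|_{B^0_{\infty,1}}\lesssim \|\partial_k v\|_{B^{-1}_{\infty,\infty}}\|u^k\|_{B^1_{\infty,1}}$; but this produces $\|u\|_{B^1_{\infty,1}}$ rather than $\|u\|_{B^0_{\infty,1}}$, which is \emph{not} what the lemma claims. The correct route — and the key structural point — is to keep this term combined with the remainder: the crucial observation is that $\sum_k \big(T_{\partial_k v} u^k + R(u^k,\partial_k v)\big)$ is, up to paraproduct-type terms one can absorb, of the form $\sum_k \partial_k\big(\text{stuff}\big)$ because $\div u = 0$, so one divisor derivative can be transferred. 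Concretely, since $\partial_k u^k = 0$, one has $\sum_k u^k\partial_k v = \sum_k \partial_k(u^k v)$, and applying Bony to $u^k v$ (which has the ``good'' balance: $u^k\in B^0_{\infty,1}$, $v\in B^1_{\infty,1}$) followed by a derivative loses exactly one derivative in a controlled way. So I would instead decompose $\sum_k\partial_k(u^k v)$: the term $\sum_k\partial_k T_{u^k} v$ gives $\|T_u v\|_{B^1_{\infty,1}}\lesssim\|u\|_{L^\infty}\|v\|_{B^1_{\infty,1}}$; the term $\sum_k\partial_k T_v u^k$ and $\sum_k\partial_k R(u^k,v)$ need Lemma~\ref{T} (second estimate, $t<0$) and Lemma~\ref{R} (with $s_1=0$, $s_2=1$, $s_1+s_2=1>0$) respectively, each landing in $B^1_{\infty,1}$ with norm $\lesssim\|u\|_{B^0_{\infty,1}}\|v\|_{B^1_{\infty,1}}$, and then the outer $\partial_k$ costs one derivative, bringing us back to $B^0_{\infty,1}$.

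Let me reconsider: the cleanest organization is actually to split $u\cdot\nabla v$ directly (without pulling out $\partial_k$) but handle the problematic second paraproduct via its spectral localization. Writing $T_{\partial_k v} u^k = \sum_j S_{j-1}(\partial_k v)\,\Delta_j u^k$, one moves the derivative: $S_{j-1}\partial_k v = \partial_k S_{j-1} v$, and then using $\Delta_j u^k$ and summing over $k$ with $\sum_k\partial_k(\cdots)$ structure — combined with the remainder term written as $\sum_{k}\widetilde\Delta_q u^k \Delta_q \partial_k v = \sum_q \partial_k(\widetilde\Delta_q u^k \Delta_q v) - \sum_q (\partial_k\widetilde\Delta_q u^k)\Delta_q v$ where the second sum vanishes after summing in $k$ by $\div u=0$ up to almost-diagonal corrections — yields the conservative form. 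The net effect is the bound $\|u\cdot\nabla v\|_{B^0_{\infty,1}}\lesssim \|u\|_{L^\infty}\|\nabla v\|_{B^0_{\infty,1}} + \|v\|_{B^1_{\infty,1}}\|\div u\|_{B^0_{\infty,1}}$ and the last term drops. I expect the main obstacle to be precisely this bookkeeping: making rigorous the claim that the ``bad'' paraproduct $T_{\nabla v}u$ plus the remainder $R(u,\nabla v)$ can be recombined using the divergence-free condition so that no $B^1_{\infty,1}$ norm of $u$ is needed — the standard Besov estimates for $T_{\nabla v}u$ alone would require $u\in B^1_{\infty,1}$, and only the div-curl cancellation saves one derivative. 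This is exactly the ``integration by parts / transferring derivatives onto test functions'' philosophy advertised in the introduction, here realized at the level of Littlewood--Paley blocks.
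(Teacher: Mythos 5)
Your overall skeleton (Bony decomposition plus a conservative rewriting using $\div u=0$) is the right one, and your treatment of the remainder is exactly the paper's: since $u\in B^0_{\infty,1}$ and $\nabla v\in B^0_{\infty,1}$ give $s_1+s_2=0$, Lemma~\ref{R} alone fails, and one must use $\sum_k R(u^k,\partial_k v)=\sum_k\partial_k R(u^k,v)$ (the commutator term vanishing by $\div u=0$) together with $\|R(u,v)\|_{B^1_{\infty,1}}\lesssim\|u\|_{B^0_{\infty,1}}\|v\|_{B^1_{\infty,1}}$ from Lemma~\ref{R} with $s_1+s_2=1>0$. That part is correct and is the only place the div-curl structure is actually needed.

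The genuine gap is your diagnosis of the second paraproduct $T_{\nabla v}u$. You claim the direct estimate forces $\|u\|_{B^1_{\infty,1}}$ and that "only the div-curl cancellation saves one derivative"; this is false. You only tested the \emph{second} estimate of Lemma~\ref{T} (with $t=-1$), but the \emph{first} estimate, applied with the low-frequency factor $\nabla v$ measured in $L^\infty$, gives directly
$\|T_{\nabla v}u\|_{B^0_{\infty,1}}\lesssim\|\nabla v\|_{L^\infty}\|u\|_{B^0_{\infty,1}}\lesssim\|v\|_{B^1_{\infty,1}}\|u\|_{B^0_{\infty,1}}$,
with no divergence-free condition at all — this is precisely what the paper does. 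Worse, your proposed repair would fail: rewriting $\sum_k S_{j-1}(\partial_k v)\Delta_j u^k=\sum_k\partial_k\bigl(S_{j-1}v\,\Delta_j u^k\bigr)$ and estimating the result in $B^1_{\infty,1}$ puts the derivative on a block of frequency $\sim 2^j$ whose amplitude is controlled by $\|v\|_{L^\infty}\|\Delta_j u\|_{L^\infty}$, so the sum over $j$ produces $\|v\|_{L^\infty}\|u\|_{B^1_{\infty,1}}$, not $\|u\|_{B^0_{\infty,1}}\|v\|_{B^1_{\infty,1}}$ (the needed bound $\|T_v u\|_{B^1_{\infty,1}}\lesssim\|u\|_{B^0_{\infty,1}}\|v\|_{B^1_{\infty,1}}$ is simply not true: take $v$ essentially low-frequency and $u$ highly oscillatory). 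In short, integration by parts helps the remainder and hurts the paraproduct; keep $T_{u}\nabla v$ and $T_{\nabla v}u$ as they are and estimate both with the $L^\infty$-versus-$B^0_{\infty,1}$ paraproduct bound, reserving the conservative form for $R(u,\nabla v)$ only. With that correction your argument collapses onto the paper's proof.
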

\begin{proof}[Proof of Lemma~\ref{PL}]
    Using Bony's decomposition, there holds 
    $$u\cdot \na v=T_u \na v+T_{\na v} u+R(u,\na v).$$
    By Lemma \ref{T}, we deduce that
    \begin{align*}
        \|T_u \na v+T_{\na v} u\|_{B^0_{\infty,1}}\lesssim \|u\|_{B^0_{\infty,1}}\|\na v\|_{B^0_{\infty,1}}.
    \end{align*}
    By virtue of Lemma \ref{R} and the fact that $u\cdot \na v=\mathrm{div\,}(v\otimes u)$, we obtain
    \begin{align*}
        \|R(u,\na v)\|_{B^0_{\infty,1}}\lesssim \|R(u,v)\|_{B^1_{\infty,1}}\lesssim\|u\|_{B^0_{\infty,1}}\|v\|_{B^1_{\infty,1}}.
    \end{align*}
    We thus finish the proof of Lemma \ref{PL}.
\end{proof}

\begin{rem}\label{rem2.1}
    We would like to highlight here that in general the $B^0_{\infty,1}$ space is not a multiplicative algebra, namely, $\|f\cdot g\|_{B^{0}_{\infty,1}}$ cannot be bounded by 
$\|f\|_{B^0_{\infty,1}}\cdot \|g\|_{B^0_{\infty,1}}$. However if they satisfy the div-curl structure as in Lemma~\ref{PL} above, then a similar version holds: $$\|f\cdot \nabla g\|_{B^0_{\infty,1}}\ls\|f\|_{B^0_{\infty,1}}\|g\|_{B^1_{\infty,1}}. $$
Furthermore, if we in addition assume both $f$ and $g$ are of zero mean in $\T^2$, then such subspace is an algebra:
 $$\|f\cdot \nabla g\|_{B^0_{\infty,1}}\ls\|f\|_{B^0_{\infty,1}}\|\na g\|_{B^0_{\infty,1}}. $$
 {One should also note that $B^0_{\infty,2}$ is not a multiplicative algebra either.}
\end{rem}

As a corollary of Lemma~\ref{PL} and Remark~\ref{rem2.1}, the following lemma holds.
\begin{lem}\label{PL1}
For nonlinear terms that satisfy the div-curl structure, i.e. $\div u=0$, the following $B^1_{\infty,1}$ estimate holds for some absolute constant $C>0$
$$ \|u\cdot \nabla v\|_{B^1_{\infty,1}}\leq C\left(\|u\|_{B^0_{\infty,1}}\|v\|_{B^2_{\infty,1}}+\|u\|_{B^1_{\infty,1}}\|v\|_{B^1_{\infty,1}}\right). $$
\end{lem}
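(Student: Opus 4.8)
The plan is to reduce everything to Lemma~\ref{PL} via the standard Besov lifting identity $\|w\|_{B^1_{\infty,1}}\sim\|w\|_{B^0_{\infty,1}}+\|\nabla w\|_{B^0_{\infty,1}}$, which follows from Bernstein's inequality (Lemma~\ref{Berns}(i)) applied frequency-by-frequency. Accordingly, I would first bound the lower-order piece directly by Lemma~\ref{PL}, getting $\|u\cdot\nabla v\|_{B^0_{\infty,1}}\lesssim\|u\|_{B^0_{\infty,1}}\|v\|_{B^1_{\infty,1}}$, which is absorbed into the second term on the right-hand side of the claimed inequality.

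Next I would handle $\nabla(u\cdot\nabla v)$. Differentiating, $\partial_k(u\cdot\nabla v)=(\partial_k u)\cdot\nabla v+u\cdot\nabla(\partial_k v)$. The key observation is that both summands again carry the divergence-free structure required by Lemma~\ref{PL}: $\div u=0$ trivially, and $\div(\partial_k u)=\partial_k(\div u)=0$. Hence Lemma~\ref{PL} applies to each:
$$\|(\partial_k u)\cdot\nabla v\|_{B^0_{\infty,1}}\lesssim\|\partial_k u\|_{B^0_{\infty,1}}\|v\|_{B^1_{\infty,1}}\lesssim\|u\|_{B^1_{\infty,1}}\|v\|_{B^1_{\infty,1}},$$
$$\|u\cdot\nabla(\partial_k v)\|_{B^0_{\infty,1}}\lesssim\|u\|_{B^0_{\infty,1}}\|\partial_k v\|_{B^1_{\infty,1}}\lesssim\|u\|_{B^0_{\infty,1}}\|v\|_{B^2_{\infty,1}},$$
where in the last steps I again invoke the lifting identity (now at the level $B^0\to B^1$ and $B^1\to B^2$) to pass from $\|\partial_k u\|_{B^0_{\infty,1}}$ to $\|u\|_{B^1_{\infty,1}}$ and from $\|\partial_k v\|_{B^1_{\infty,1}}$ to $\|v\|_{B^2_{\infty,1}}$. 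Summing over $k=1,2$ and combining with the $B^0_{\infty,1}$-bound of the first paragraph yields the claim.

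There is essentially no genuine obstacle here — this is a routine corollary — but the one point deserving care is the lifting identity on $\T^2$: since the inhomogeneous decomposition retains the zero mode $\Delta_0 w=\hat w(0)$, on which $\nabla$ vanishes, one genuinely needs the term $\|w\|_{B^0_{\infty,1}}$ on the right-hand side to control the mean, so the equivalence must be stated as $\|w\|_{B^1_{\infty,1}}\sim\|w\|_{B^0_{\infty,1}}+\|\nabla w\|_{B^0_{\infty,1}}$ and not with $\|\nabla w\|_{B^0_{\infty,1}}$ alone. An alternative route is to first project to the zero-mean part and apply the algebra-type estimate in Remark~\ref{rem2.1}; I would mention this variant but carry out the argument above with inhomogeneous norms to keep it self-contained.
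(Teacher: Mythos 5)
Your proposal is correct and follows exactly the route the paper intends: the paper states only that Lemma~\ref{PL1} ``is a direct corollary of Lemma~\ref{PL}'' and omits all details, and your argument (Bernstein lifting, Leibniz rule, and the observation that $\div(\partial_k u)=0$ so Lemma~\ref{PL} still applies) is the natural way to fill in that corollary. Your remark about the zero mode on $\T^2$ is a sensible point of care but does not change the outcome.
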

\begin{proof}[Proof of Lemma~\ref{PL1}]
   This lemma is a direct corollary of Lemma~\ref{PL} hence we omit the details. 
\end{proof}

Note that the following embedding will be applied oftentimes.

\begin{lem}\label{embed}
    Assume $f$ is smooth in $\T^2$ then it follows for any $s\ge 0$ and $\epsilon>0$ that 
    \begin{equation}
        \|f\|_{B^{s}_{\infty,1}}\le C_\epsilon \|f\|_{B^{s+\epsilon}_{\infty,2}}
    \end{equation}
    for some constant $C_{s,\epsilon}>0$.
\end{lem}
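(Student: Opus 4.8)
The plan is to deduce the inequality directly from the Cauchy--Schwarz inequality applied to the dyadic sequence. By the definition \eqref{defBesov}, the left-hand side is the $\ell^1(\Z)$ norm of $a_j:=2^{js}\|\Delta_j f\|_{L^\infty}$, and since on the inhomogeneous side $\Delta_j f=0$ for $j<0$ (see \eqref{aeq2}), this sum effectively runs over $j\ge 0$. Likewise the right-hand side is $\|f\|_{B^{s+\epsilon}_{\infty,2}}=\bigl(\sum_{j\ge 0}2^{2j(s+\epsilon)}\|\Delta_j f\|_{L^\infty}^2\bigr)^{1/2}$, i.e.\ the $\ell^2$ norm of $b_j:=2^{j(s+\epsilon)}\|\Delta_j f\|_{L^\infty}=2^{j\epsilon}a_j$.

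The key step is then the factorization $a_j=2^{-j\epsilon}b_j$ followed by Cauchy--Schwarz in the index $j$:
\begin{align*}
\|f\|_{B^s_{\infty,1}}=\sum_{j\ge 0}2^{-j\epsilon}b_j\le\Bigl(\sum_{j\ge 0}2^{-2j\epsilon}\Bigr)^{1/2}\Bigl(\sum_{j\ge 0}b_j^2\Bigr)^{1/2}=\frac{1}{\sqrt{1-2^{-2\epsilon}}}\,\|f\|_{B^{s+\epsilon}_{\infty,2}}.
\end{align*}
Setting $C_\epsilon:=(1-2^{-2\epsilon})^{-1/2}$ completes the argument; note that the smoothness of $f$ is needed only to guarantee that the right-hand side is finite so the rearrangement is legitimate, and that the resulting constant can in fact be taken independent of $s$.

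There is no genuine obstacle here: the only subtlety worth recording is that the geometric factor $\sum_{j\ge 0}2^{-2j\epsilon}$ diverges as $\epsilon\to 0$, which is exactly why the endpoint embedding $B^s_{\infty,2}\hookrightarrow B^s_{\infty,1}$ fails and the arbitrarily small regularity loss $\epsilon>0$ is unavoidable. The same computation yields the more general fact $B^s_{p,r_1}\hookrightarrow B^{s-\epsilon}_{p,r_2}$ for all $p,r_1,r_2$ and $\epsilon>0$, but only the stated special case is used in the sequel.
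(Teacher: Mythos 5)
Your proof is correct and follows exactly the paper's argument: factor $2^{js}\|\Delta_j f\|_{L^\infty}=2^{-j\epsilon}\cdot 2^{j(s+\epsilon)}\|\Delta_j f\|_{L^\infty}$ and apply Cauchy--Schwarz in $j$, with the geometric series supplying the constant $C_\epsilon$. The explicit constant and the remark on the failure of the $\epsilon=0$ endpoint are welcome additions but the approach is the same.
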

\begin{proof}[Proof of Lemma~\ref{embed}]
We sketch the proof here. 
\begin{align*}
      \|f\|_{B^{s}_{\infty,1}} = \sum_{j\ge -1} 2^{(s+\epsilon)j}\|\De_j f\|_{L^\infty} 2^{-\epsilon j}\lesssim \Big\|(2^{(s+\epsilon)j}\|\Delta_j f\|_{L^\infty})_j \Big\|_{l^2}\cdot \|2^{-\epsilon j}\|_{l^2}\lesssim \|f\|_{B^{s+\epsilon}_{\infty,2}}.
\end{align*}

\end{proof}

\begin{lem}[2nd product rules]\label{PL2}
For nonlinear terms the following $B^0_{\infty,2}$ estimate holds for any $\epsilon>0$:
$$ \|u\cdot \nabla v\|_{B^0_{\infty,2}}\leq C_\epsilon\|u\|_{B^0_{\infty,2}}\|v\|_{B^{1+\epsilon}_{\infty,2}},$$
for some constant $C_\epsilon>0$.
\end{lem}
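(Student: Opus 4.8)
The plan is to expand the product via Bony's decomposition
$$u\cdot\nabla v = T_u\nabla v + T_{\nabla v}u + R(u,\nabla v),$$
and to bound each of the three pieces in $B^0_{\infty,2}$ using Lemma~\ref{T}, Lemma~\ref{R} and the embedding Lemma~\ref{embed}. The structural point that forces the $\epsilon$ of extra regularity onto $v$ is that $B^0_{\infty,2}$ does \emph{not} embed into $L^\infty$ (and is not an algebra, cf. Remark~\ref{rem2.1}): the naive paraproduct bounds $\|T_u w\|_{B^0_{\infty,2}}\lesssim\|u\|_{L^\infty}\|w\|_{B^0_{\infty,2}}$ and $\|T_w u\|_{B^0_{\infty,2}}\lesssim\|w\|_{L^\infty}\|u\|_{B^0_{\infty,2}}$ are unavailable with $w=\nabla v$ when only $u\in B^0_{\infty,2}$ is known. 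One must instead move a derivative through the negative-index paraproduct estimate, or pass through the subspace $B^0_{\infty,1}\hookrightarrow L^\infty$.

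For $T_u\nabla v$ I would apply the second (negative-smoothing) estimate of Lemma~\ref{T} with smoothness $t=-\epsilon$ on $u$, smoothness $s=\epsilon$ on $\nabla v$, and exponents $r_1=\infty$, $r_2=2$, so that $1/r=\min\{1,0+1/2\}=1/2$; this yields
$$\|T_u\nabla v\|_{B^0_{\infty,2}}\le \frac{C}{\epsilon}\,\|u\|_{B^{-\epsilon}_{\infty,\infty}}\,\|\nabla v\|_{B^{\epsilon}_{\infty,2}}\lesssim_\epsilon \|u\|_{B^0_{\infty,2}}\|v\|_{B^{1+\epsilon}_{\infty,2}},$$
using $\|u\|_{B^{-\epsilon}_{\infty,\infty}}\le\|u\|_{B^0_{\infty,\infty}}\le\|u\|_{B^0_{\infty,2}}$ and Bernstein's inequality (Lemma~\ref{Berns}(i)) in the form $\|\nabla v\|_{B^{\epsilon}_{\infty,2}}\lesssim\|v\|_{B^{1+\epsilon}_{\infty,2}}$. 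For $T_{\nabla v}u$ I would use the first estimate of Lemma~\ref{T} with $p_1=p_2=\infty$, $r=2$, giving $\|T_{\nabla v}u\|_{B^0_{\infty,2}}\le C\|\nabla v\|_{L^\infty}\|u\|_{B^0_{\infty,2}}$, and then bound $\|\nabla v\|_{L^\infty}\lesssim\|\nabla v\|_{B^0_{\infty,1}}\lesssim_\epsilon\|\nabla v\|_{B^{\epsilon}_{\infty,2}}\lesssim\|v\|_{B^{1+\epsilon}_{\infty,2}}$ via Lemma~\ref{embed} and Bernstein. For the remainder $R(u,\nabla v)$ I would use the first case of Lemma~\ref{R} with $s_1=0$, $s_2=\epsilon$ (so $s_1+s_2=\epsilon>0$), $p_1=p_2=\infty$, $r_1=\infty$, $r_2=2$, which gives $\|R(u,\nabla v)\|_{B^{\epsilon}_{\infty,2}}\le\frac{C}{\epsilon}\|u\|_{B^0_{\infty,\infty}}\|\nabla v\|_{B^{\epsilon}_{\infty,2}}$; combined with the trivial embedding $B^{\epsilon}_{\infty,2}\hookrightarrow B^0_{\infty,2}$ and Bernstein this closes the third piece.

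Summing the three bounds yields the lemma with $C_\epsilon=O(1/\epsilon)$, after absorbing the harmless $C^{1+\epsilon}$ factors from Lemmas~\ref{T}--\ref{R}. I do not anticipate any genuine analytic obstacle: the only care required is index bookkeeping so that every summability exponent comes out to $2$, and the appearance of $B^{1+\epsilon}_{\infty,2}$ rather than $B^1_{\infty,2}$ on the right is unavoidable precisely because of the failure of the $L^\infty$-embedding and of the algebra property at summability exponent $2$. If a fully self-contained argument is preferred, one can instead run a direct Littlewood--Paley computation, estimating $\|S_{j-1}u\|_{L^\infty}\lesssim\sqrt{j}\,\|u\|_{B^0_{\infty,2}}$ by Cauchy--Schwarz and absorbing the $\sqrt{j}$ into the dyadic gain $2^{-\epsilon j}$ supplied by the extra regularity of $v$; this makes transparent that the gain $2^{-\epsilon j}$ is exactly what is being spent in the three estimates above.
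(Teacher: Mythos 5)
Your proposal is correct and follows essentially the same route as the paper: Bony's decomposition, the two paraproduct estimates of Lemma~\ref{T}, Lemma~\ref{R} for the remainder, and the embeddings of Lemma~\ref{embed}, with the $\epsilon$ of extra regularity on $v$ spent in exactly the same places. The only (immaterial) difference is bookkeeping: you choose $r_1=\infty$, $r_2=2$ to land directly in $\ell^2$ summability, whereas the paper takes $r_1=r_2=2$ to land in $B^0_{\infty,1}$ (resp.\ $B^\epsilon_{\infty,1}$) and then embeds into $B^0_{\infty,2}$.
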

\begin{proof}[Proof of Lemma~\ref{PL2}]
    Using Bony's decomposition, there holds 
    $$u\cdot \na v=T_u \na v+T_{\na v} u+R(u,\na v).$$
    By Lemmas \ref{T} and \ref{embed}, we deduce that given $\epsilon>0$, it holds that
    \begin{align*}
         \|T_u \na v\|_{B^0_{\infty,2}}\lesssim \|T_u \na v\|_{B^0_{\infty,1}}\lesssim\|u\|_{B^{-\epsilon}_{\infty,2}}\|\na v\|_{B^{\epsilon}_{\infty,2}}\lesssim \|u\|_{B^{0}_{\infty,2}}\|v\|_{B^{1+\epsilon}_{\infty,2}},
    \end{align*}
    and
    \begin{align*}
        \|T_{\na v} u\|_{B^0_{\infty,2}}\lesssim \|\na v\|_{L^\infty}\|u\|_{B^0_{\infty,2}}\lesssim \|u\|_{B^{0}_{\infty,2}}\|v\|_{B^{1}_{\infty,1}}\lesssim\|u\|_{B^{0}_{\infty,2}}\|v\|_{B^{1+\epsilon}_{\infty,2}}.
    \end{align*}
    Similarly, it follows by Lemma \ref{R} that
    \begin{align*}
        \|R(u,\na v)\|_{B^0_{\infty,2}}\lesssim 
        \|R(u,\na v)\|_{B^\epsilon_{\infty,1}}\lesssim
        \|u\|_{B^0_{\infty,2}}\|\na v\|_{B^{\epsilon}_{\infty,2}}\lesssim\|u\|_{B^{0}_{\infty,2}}\|v\|_{B^{1+\epsilon}_{\infty,2}}.
    \end{align*}
    We thus finish the proof of Lemma \ref{PL2}.
\end{proof}

We introduce the following lemma which will be useful in dealing with the pressure term $\nabla p$.
\begin{lem}\label{P}
For any $-1<\sigma\leq 1$, there exists a constant $C$ such that if $\rm div~u=\rm div~v=0$, then
\begin{align*}
\|\nabla(-\Delta)^{-1}\rm div(u\cdot\nabla v)\|_{B^\sigma_{\infty,1}}\leq C\min(\|u\|_{B^\sigma_{\infty,1}}\|v\|_{B^1_{\infty,1}},\|u\|_{B^1_{\infty,1}}\|v\|_{B^\sigma_{\infty,1}}).
\end{align*}
\end{lem}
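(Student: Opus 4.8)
The plan is to use the two divergence-free conditions twice: once to symmetrise the inequality, and once more, block by block, to recover inside the (complementary) Leray projector the derivative that a naive product estimate loses.

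\textbf{Step 1 (symmetrisation).} Write $\mathcal{Q}:=\nabla(-\Delta)^{-1}\div$. Since $\div v=0$ one has $\div(u\cdot\nabla v)=\partial_\ell(u_i\,\partial_iv_\ell)=(\partial_\ell u_i)(\partial_iv_\ell)$, which is symmetric in $(u,v)$; by $\div u=0$ the same expression equals $\div(v\cdot\nabla u)$. Both are mean-zero, so $\mathcal{Q}(u\cdot\nabla v)=\mathcal{Q}(v\cdot\nabla u)$ as distributions. Hence it suffices to prove the one-sided bound $\|\mathcal{Q}(v\cdot\nabla u)\|_{B^\sigma_{\infty,1}}\le C\,\|u\|_{B^1_{\infty,1}}\|v\|_{B^\sigma_{\infty,1}}$ for $-1<\sigma\le1$; exchanging $u$ and $v$ and using the identity then supplies the symmetric bound, hence the minimum in the statement.

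\textbf{Step 2 (Bony decomposition, the easy terms).} Decompose $v\cdot\nabla u=T_v\nabla u+T_{\nabla u}v+R(v,\nabla u)$. Throughout I would use that $\mathcal{Q}$, resp. $\mathcal{Q}\partial_i$, is a Fourier multiplier that is smooth and homogeneous of degree $0$, resp. $1$, away from the origin, hence bounded $B^s_{\infty,1}\to B^s_{\infty,1}$, resp. $B^{s+1}_{\infty,1}\to B^s_{\infty,1}$, for every $s$ (all functions in sight are mean-zero). The paraproduct $T_{\nabla u}v$ is immediate: by Lemma~\ref{T} and $B^1_{\infty,1}\hookrightarrow W^{1,\infty}$, $\|\mathcal{Q}(T_{\nabla u}v)\|_{B^\sigma_{\infty,1}}\lesssim\|\nabla u\|_{L^\infty}\|v\|_{B^\sigma_{\infty,1}}\lesssim\|u\|_{B^1_{\infty,1}}\|v\|_{B^\sigma_{\infty,1}}$. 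For the remainder, $\div v=0$ yields the divergence form $R(v,\nabla u)=\partial_iR(v_i,u)$ (because $\Delta_kv_i\,\partial_i\Delta_m u=\partial_i(\Delta_kv_i\,\Delta_m u)$), so using the order-$1$ action of $\mathcal{Q}\partial_i$ together with Lemma~\ref{R} with exponents $s_1=\sigma$, $s_2=1$ (admissible precisely because $\sigma+1>0$), $\|\mathcal{Q}(R(v,\nabla u))\|_{B^\sigma_{\infty,1}}\lesssim\|R(v_i,u)\|_{B^{\sigma+1}_{\infty,1}}\lesssim\|v\|_{B^\sigma_{\infty,1}}\|u\|_{B^1_{\infty,1}}$.

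\textbf{Step 3 (the low–high paraproduct, the crux).} The only non-routine term is $\mathcal{Q}(T_v\nabla u)=\sum_k\mathcal{Q}(S_{k-1}v\cdot\nabla\Delta_k u)$: a blunt estimate would cost $\|v\|_{L^\infty}\|u\|_{B^{\sigma+1}_{\infty,1}}$, one derivative too many. The gain comes from $\div\Delta_k u=0$, i.e. $\mathcal{Q}\Delta_k u=0$, which makes each summand a commutator, $\mathcal{Q}(S_{k-1}v\cdot\nabla\Delta_k u)=[\mathcal{Q},\,S_{k-1}v\cdot\nabla]\Delta_k u$; equivalently, one $u$-derivative is annihilated inside the divergence, $\div(S_{k-1}v\cdot\nabla\Delta_k u)=(\partial_mS_{k-1}v_i)(\partial_i\Delta_k u_m)$. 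A Calderón–Zygmund-type commutator estimate — exactly the mechanism underlying Lemma~\ref{CI} — then gives $\|\mathcal{Q}(S_{k-1}v\cdot\nabla\Delta_k u)\|_{L^\infty}\lesssim\|\nabla S_{k-1}v\|_{L^\infty}\|\Delta_k u\|_{L^\infty}$, and since $-1<\sigma\le1$ one has the Bernstein-type bound $\|\nabla S_{k-1}v\|_{L^\infty}\lesssim2^{k(1-\sigma)}\|v\|_{B^\sigma_{\infty,1}}$ (split $2^{\ell}\|\Delta_\ell v\|_{L^\infty}=2^{\ell(1-\sigma)}\cdot2^{\ell\sigma}\|\Delta_\ell v\|_{L^\infty}$ and sum over $\ell<k$). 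Summing the spectrally localised pieces, $\|\mathcal{Q}(T_v\nabla u)\|_{B^\sigma_{\infty,1}}\lesssim\|v\|_{B^\sigma_{\infty,1}}\sum_k2^{k}\|\Delta_k u\|_{L^\infty}=\|v\|_{B^\sigma_{\infty,1}}\|u\|_{B^1_{\infty,1}}$. Together with Step 2 this proves the one-sided bound, and Step 1 finishes the lemma. (Alternatively one may note directly that $\mathcal{Q}(u\cdot\nabla v)=\sum_j[u\cdot\nabla,\LP\Delta_j]v$ since $\LP v=v$, and apply Lemma~\ref{CI} with $p=\infty$, $r=1$: in the range $\sigma<1+\tfrac{d}{p}=1$ its last case already delivers $\|\nabla u\|_{B^0_{\infty,1}}\|v\|_{B^\sigma_{\infty,1}}\sim\|u\|_{B^1_{\infty,1}}\|v\|_{B^\sigma_{\infty,1}}$, and the endpoint $\sigma=1$ is the case $\sigma=1+\tfrac{d}{p}$, $r=1$.) The genuine obstacle in either route is this low–high interaction: the derivative lost by the transport term has to be recovered from the divergence-free structure of $u$, and the exponent one recovers must still close the $\ell^1$ summation over frequencies, which is what pins the admissible range to $-1<\sigma\le1$.
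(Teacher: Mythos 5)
The paper never actually proves Lemma~\ref{P}: its ``proof'' is a citation to \cite{CLYY24} and \cite{BCD11}. So there is no in-paper argument to compare against, and your proposal supplies a genuine, self-contained proof. I have checked it and it is correct. The symmetrisation step is valid: $\div(u\cdot\nabla v)=(\partial_\ell u_i)(\partial_i v_\ell)=\div(v\cdot\nabla u)$ under the two divergence-free conditions, so $\nabla(-\Delta)^{-1}\div$ sees a symmetric expression and one one-sided bound yields the minimum. In the Bony decomposition, $T_{\nabla u}v$ follows from the first estimate of Lemma~\ref{T} (valid for all $s$), and the remainder in divergence form $\partial_i R(v_i,u)$ together with Lemma~\ref{R} at $s_1+s_2=\sigma+1>0$ is exactly where the restriction $\sigma>-1$ enters. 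The crux, $T_v\nabla u$, is handled correctly: using $\div \Delta_k u=0$ and $\div S_{k-1}v=0$ one writes $\div(S_{k-1}v\cdot\nabla\Delta_k u)=\partial_i\bigl[(\partial_m S_{k-1}v_i)\Delta_k u_m\bigr]$, and since the block is frequency-localised in an annulus of radius $\sim 2^k$ the order $-1$ multiplier $\nabla(-\Delta)^{-1}$ absorbs the outer derivative, giving $\|\mathcal{Q}(S_{k-1}v\cdot\nabla\Delta_k u)\|_{L^\infty}\lesssim\|\nabla S_{k-1}v\|_{L^\infty}\|\Delta_k u\|_{L^\infty}$; your Bernstein summation $\|\nabla S_{k-1}v\|_{L^\infty}\lesssim 2^{k(1-\sigma)}\|v\|_{B^\sigma_{\infty,1}}$ is precisely where $\sigma\le 1$ is used. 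One caveat on the parenthetical shortcut: Lemma~\ref{CI} controls $\bigl\|(2^{j\sigma}\|I_j\|_{L^\infty})_j\bigr\|_{l^1}$, not directly $\|\sum_j I_j\|_{B^\sigma_{\infty,1}}$; passing from one to the other requires the (standard but nontrivial) almost-orthogonality of the commutators $I_j$ in frequency, so that route needs an extra step. Your main argument does not rely on it.
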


\begin{proof}[Proof of Lemma~\ref{P}]
    We refer the readers to \cite{CLYY24} and \cite{BCD11} for the details.
\end{proof}

\section{Unconditional stability of the scheme in Besov spaces}\label{sec:stab}
In this section we first show the stability of the semi-implicit scheme as in Theorem~\ref{Thm1.1}. Recall that the semi-implicit scheme
\begin{equation}\label{Semischeme1}
\frac{u^{n+1}-u^n}{\tau}+\left(u^{n}\cdot \na u^{n+1}\right)+\na p^n= \nu \Delta u^{n+1}, \quad \nabla\cdot u^{n+1}=0
\end{equation}
where $\tau$ is the size of the time step and we define $M= \frac{T}{\tau}$, the total number of steps. To solve $u^{n+1}$, we apply the Leray projection $\LP$  to  the equation \eqref{Semischeme1} to obtain that 
\begin{equation}\label{Semischeme1b}
\frac{u^{n+1}-u^n}{\tau} +\LP(u^n \cdot \na u^{n+1})=\nu \Delta u^{n+1}.
\end{equation}
Then we apply the Littlewood-Paley projection $\De_j$ to \eqref{Semischeme1b} (note that $\De_j$ commutes with both $\LP$ and $\De$) and thus obtain that
\begin{equation}\label{Semischeme2a}
\frac{\De_j u^{n+1}-\De_j u^n}{\tau} +\LP \De_j (u^n \cdot \na u^{n+1})=\nu \Delta \De_j u^{n+1},
\end{equation}
or equivalently
\begin{equation}\label{Semischeme2}
\frac{\De_j u^{n+1}-\De_j u^n}{\tau} + (u^n \cdot \na \LP\De_j u^{n+1})-\left([u^n\cdot \na, \LP\De_j]u^{n+1}\right)=\nu \Delta \De_j u^{n+1}.
\end{equation}
Then we multiply \eqref{Semischeme2} by $|\De_j u^{n+1}|^{p-2}\De_j u^{n+1} $ and integrate. Note that by the divergence free condition $\LP \De_j u^{n+1} = \De_j u^{n+1}$, we thus have
\begin{align*}
    \Lg u^n\cdot \na \LP \De_j u^{n+1}, |\De_j u^{n+1}|^{p-2}\De_j u^{n+1}  \Rg = 0.
\end{align*}
Using Plancherel formula and noting that the frequency of $\Delta_j$ is supported in a circular annulus with radius approximately $2^j$, we infer that 
\begin{equation}\label{Planch}
\begin{aligned}
    \Lg \De\De_j e^{n+1} , |\De_j e^{n+1}|^{p-2} \De_j e^{n+1} \Rg &=\frac{1}{(2\pi)^2} \Lg \mathcal{F} \left(\De\De_j e^{n+1} \right), \mathcal{F} \left( |\De_j e^{n+1}|^{p-2} \De_j e^{n+1} \right ) \Rg \\
    &=\frac{1}{(2\pi)^2} \Lg  -|\xi|^2  \mathcal{F} \left( \De_je^{n+1} \right), \mathcal{F} \left( |\De_j e^{n+1}|^{p-2} \De_j e^{n+1} \right ) \Rg\\
    &\le -\frac{2^{2j}}{(2\pi)^2} \Lg  \mathcal{F} \left( \De_je^{n+1} \right), \mathcal{F} \left( |\De_j e^{n+1}|^{p-2} \De_j e^{n+1} \right ) \Rg\\
    & = -2^{2j} \Lg \De_j e^{n+1} , |\De_j e^{n+1}|^{p-2} \De_j e^{n+1} \Rg\\
    & = -2^{2j}\|\De_j e^{n+1}\|^p_{L^p}.
\end{aligned}    
\end{equation}
Then it follows by H\"older's inequality that
\begin{align*}
    \|\De_j u^{n+1}\|_{L^p}^p+2^{2j}\nu\tau\|\De_j u^{n+1}\|_{L^p}^p\le  \|\De_j u^{n}\|_{L^p}\|\De_j u^{n+1}\|_{L^p}^{p-1}+\widetilde{C}\tau \|[u^n\cdot \na, \LP\De_j]u^{n+1}\|_{L^p}\|\De_j u^{n+1}\|_{L^p}^{p-1},
\end{align*}
which implies
\begin{align*}
 \|\De_j u^{n+1}\|_{L^p}+2^{2j}\nu\tau\|\De_j u^{n+1}\|_{L^p}
   \le \|\De_j u^{n}\|_{L^p}+\widetilde{C}\tau \|[u^n\cdot \na, \LP\De_j]u^{n+1}\|_{L^p}.
\end{align*}

\texttt{Part (i).} We now show the $B^s_{\infty,1}$ stability for $s\ge 1$. Inductively, we assume $\|u^n\|_{B^{s}_{\infty,1}}\le 8\|u_0\|_{B^{s}_{\infty,1}}$. Then we obtain 
\begin{align*}
    2^{sj} \|\De_j u^{n+1}\|_{L^p}+C_s2^{(2+s)j}\nu\tau\|\De_j u^{n+1}\|_{L^p}
   \le 2^{sj} \|\De_j u^{n}\|_{L^p}+\widetilde{C_s}2^{sj}\tau \|[u^n\cdot \na, \LP\De_j]u^{n+1}\|_{L^p}.
\end{align*}
Then by letting $p\to\infty$ and summing in $j$ we obtain that 
\begin{equation}
\| u^{n+1}\|_{B^s_{\infty,1}}\le \| u^{n}\|_{B^s_{\infty,1}}+\left\| 2^{sj}\left(\|[u^n\cdot \na, \LP\De_j]u^{n+1}\|_{L^\infty}\right)_j\right\|_{l^1}.
\end{equation}
By Lemma \ref{CI} with $s\ge 1$, we know that
\begin{equation}
\| u^{n+1}\|_{B^s_{\infty,1}}\le \| u^{n}\|_{B^s_{\infty,1}}+C_s\tau \|u^n\|_{B^{s}_{\infty,1}}\|u^{n+1}\|_{B^{s}_{\infty,1}}.
\end{equation}
Here $C_s>0$ only depends on $s\ge 1$.

\begin{rem}\label{rem3.1}
   The local existence theory of nonlinear PDEs  usually assumes sufficiently small time interval $T$. Similar to the discussion in \cite{CLW24}, without loss of generality we can take the time interval $T= \frac{1}{8 C_s\|u_0\|_{B^{s}_{\infty,1}}}$. By copying the time interval repeatedly, we can extend the time interval to the whole life-span $T_{\text{max}}$. Whether $T_{\text{max}}=\infty$ is open for the three dimensional incompressible NS equations.
\end{rem}

As discussed above in Remark~\ref{rem3.1}, there is no loss to assume $T= \frac{1}{8 C_s\|u_0\|_{B^{s}_{\infty,1}}}$ and $\|u^n\|_{B^{s}_{\infty,1}} \leq 8\|u_0\|_{B^{s}_{\infty,1}}$. We thus get that
\begin{equation*}
\begin{aligned}
   \|u^{n+1}\|_{B^{s}_{\infty,1}} &\leq \|u^n\|_{B^{s}_{\infty,1}} + C_s\tau \|u^n\|_{B^{s}_{\infty,1}}\|u^{n+1}\|_{B^{s}_{\infty,1}}\\
   &\leq \|u^n\|_{B^{s}_{\infty,1}} + 8C_s\frac{T}{M} \|u_0\|_{B^{s}_{\infty,1}}\|u^{n+1}\|_{B^{s}_{\infty,1}}\\
   &\leq \|u^n\|_{B^{s}_{\infty,1}} + \frac{1}{M} \|u^{n+1}\|_{B^{s}_{\infty,1}},
\end{aligned}
\end{equation*}
which implies that
\begin{align*}
\|u^{n+1}\|_{B^{s}_{\infty,1}} \leq (1+h)\|u^n\|_{B^{s}_{\infty,1}},
\end{align*}
where $h:= \frac{1}{M-1}\le 1$. Iterating the inequality above we can complete the proof: 
\begin{align*}
    \|u^{n+1}\|_{B^{s}_{\infty,1}} &\leq (1+h)^{n+1}\|u_0\|_{B^{1}_{\infty,1}}\nonumber\\
    &\leq (1+h)^{M+1}\|u_0\|_{B^{s}_{\infty,1}}\nonumber\\
    &\leq (1+h)^{\frac{1}{h}+2}\|u_0\|_{B^{s}_{\infty,1}}\nonumber\\
    &\leq 8\|u_0\|_{B^{s}_{\infty,1}}.
\end{align*}
Here the last inequality follows from the fact that $(1+h)^{2+\frac{1}{h}}\le 8$ for $h<1$. We then conclude \eqref{1.6}.

\texttt{Part (ii).} We now show thte $B^s_{\infty,2}$ stability $(s> 1)$.

Inductively, we assume $\|u^n\|_{B^{s}_{\infty,2}}\le 8\|u_0\|_{B^{s}_{\infty,2}}$. We have
\begin{align*}
    2^{sj} \|\De_j u^{n+1}\|_{L^p}+C_s2^{(2+s)j}\nu\tau\|\De_j u^{n+1}\|_{L^p}
   \le 2^{sj} \|\De_j u^{n}\|_{L^p}+\widetilde{C_s}2^{sj}\tau \|[u^n\cdot \na, \LP\De_j]u^{n+1}\|_{L^p}.
\end{align*}
Then by letting $p\to\infty$ and taking $l^2$-norm for  $j$ we obtain that 
\begin{equation}
\begin{aligned}
\| u^{n+1}\|_{B^s_{\infty,2}}\le \| u^{n}\|_{B^s_{\infty,2}}+C_s\tau \left\| 2^{sj}\left(\|[u^n\cdot \na, \LP\De_j]u^{n+1}\|_{L^\infty}\right)_j\right\|_{l^2}.
\end{aligned}
\end{equation}
By Lemma \ref{CI}, we have
\begin{equation}
\begin{aligned}
\| u^{n+1}\|_{B^s_{\infty,2}}\le& \| u^{n}\|_{B^s_{\infty,2}}+C_s\tau(\|\na u^n\|_{L^\infty} \|u^{n+1}\|_{B^s_{\infty,2}}+\|\na u^n\|_{B^{s-1}_{\infty,2}} \|\na u^{n+1}\|_{L^\infty})\\
\le &\| u^{n}\|_{B^s_{\infty,2}} +C_s \tau (\|u^n\|_{B^1_{\infty,1}} \|u^{n+1}\|_{B^s_{\infty,2}}+\| u^n\|_{B^{s}_{\infty,2}} \| u^{n+1}\|_{B^1_{\infty,1}})\\
\le& \| u^{n}\|_{B^s_{\infty,2}} +C_s \tau \|u^n\|_{B^s_{\infty,2}} \|u^{n+1}\|_{B^s_{\infty,2}},
\end{aligned}
\end{equation}
where the last inequality follows from Lemma~\ref{embed} noting $s>1$. Here $C_s>0$ only depends on $s>1$. By Remark~\ref{rem3.1}, we can assume $T= \frac{1}{8 C_s\|u_0\|_{B_{\infty, 2}^s}}$ from the local theory and $\|u^n\|_{B_{\infty, 2}^s} \leq 8 \|u_0\|_{B_{\infty, 2}^s}$. Similarly we get that
\begin{equation*}
\begin{aligned}
   \|u^{n+1}\|_{B_{\infty, 2}^s} &\leq \|u^n\|_{B_{\infty, 2}^s} + C_s\tau \|u^n\|_{B_{\infty, 2}^s}\|u^{n+1}\|_{B_{\infty, 2}^s}\\
   &\leq \|u^n\|_{B_{\infty, 2}^s} + 8C_s\frac{T}{M} \|u_0\|_{B_{\infty, 2}^s}\|u^{n+1}\|_{B_{\infty, 2}^s}\\
   &\leq \|u^n\|_{B_{\infty, 2}^s} + \frac{1}{M} \|u^{n+1}\|_{B_{\infty, 2}^s},
\end{aligned}
\end{equation*}
which implies that
\begin{align*}
\|u^{n+1}\|_{B_{\infty, 2}^s} \leq (1+h)\|u^n\|_{B_{\infty, 2}^s},
\end{align*}
where $h:= \frac{1}{M-1}$. This leads to
\begin{align*}
    \|u^{n+1}\|_{B_{\infty, 2}^s} \leq 8\|u_0\|_{B_{\infty, 2}^s}.
\end{align*}
This then proves \eqref{1.7}. Note that here we only requires $h=\frac{1}{M-1}\le 1$, or equivalently $\tau\le T$, a very mild restriction.

\begin{rem}
    Note that following embeddings hold: $B^{0}_{\infty,1}\hookrightarrow L^\infty$ and $B^{0}_{\infty,1}\hookrightarrow B^{0}_{\infty,2}$; however no inclusion relation holds between $L^\infty$ and $B^{0}_{\infty,2}$.
\end{rem}

\section{$B^0_{\infty,1 }$ error estimates}\label{sec:Besov1}
In this section we shall prove the $B^0_{\infty,1 }$ error estimates by proving Theorem~\ref{Thm1.2}. Assume $u(t)$ is the exact solution to the NS equations \eqref{NSE}. Then by the fundamental theorem of calculus, we have
\begin{align*}
    u\left(t_{n+1}\right)&=u\left(t_{n}\right)+\int^{t_{n+1}}_{t_n} \partial_{t‘}u dt'\nonumber\\
    &= u\left(t_{n}\right)+\int^{t_{n+1}}_{t_n}  - \LP(u \cdot \na u) + \nu \De u\, dt'.
\end{align*}
This together with \eqref{Semischeme2} imply that 
\begin{equation}\label{4.1}
\begin{aligned}
  &\De_j  u^{n+1}-  \De_j  u\left(t_{n+1}\right) \\
  =&  \Delta_j  u^n - \tau   \LP \De_j (u^{n} \cdot \na u^{n+1}) + \tau \nu \De  \De_j  u^{n+1} -  \De_j u\left(t_n\right) +\int^{t_{n+1}}_{t_n}   \LP\De_j(u \cdot \na u) - \nu \De \De_j u \, dt'\\
    =&\De_ju^n-\De_ju\left(t_n\right)+ \nu \int^{t_{n+1}}_{t_n} \De \De_ju^{n+1}- \De \De_j u\, dt' +  \int^{t_{n+1}}_{t_n} \LP \De_j(u \cdot \na u - u^{n} \cdot \na u^{n+1})\, dt'.
\end{aligned}
\end{equation}
Then, we consider the $L^p$-estimates for $\De_j e^{n+1}:=\De_j u^{n+1}-\De_j u\left(t_{n+1}\right)$. We multiply \eqref{4.1} by $|\De_j e^{n+1}\|^{p-2}\De_j e^{n+1}$ and integrate: 
\begin{equation}\label{4.2}
\begin{aligned}
    &\Lg \De_j e^{n+1}, |\De_j e^{n+1}|^{p-2} \De_j e^{n+1}  \Rg \\&= \Lg \De_j e^{n},|\De_j e^{n+1}|^{p-2} \De_j e^{n+1}\Rg\\
    &\quad + \nu \int^{t_{n+1}}_{t_n} \Lg \De \De_j u^{n+1}- \De \De_j u , |\De_j e^{n+1}|^{p-2} \De_j e^{n+1} \Rg dt' \\
    &\quad+  \int^{t_{n+1}}_{t_n} \Lg \LP \De_j (u \cdot \na u - u^{n} \cdot \na u^{n+1}), |\De_j e^{n+1}|^{p-2} \De_j e^{n+1} \Rg dt'\\
    &:= I_1 + I_2 + I_3.
    \end{aligned}
    \end{equation}
It follows by H\"older's inequality that
\begin{align}\label{I1}
    I_1 \leq \|\De_j e^{n+1}\|^{p-1}_{L^p}\|\De_j e^{n}\|_{L^p}.
\end{align}
For $I_2$, we have
\begin{align*}
    I_2 &=\nu \int^{t_{n+1}}_{t_n} \Lg \De\De_j u^{n+1}- \De\De_j u\left(t_{n+1}\right),  |\De_j e^{n+1}|^{p-2} \De_j e^{n+1} \Rg\, dt'\nonumber\\
    &\quad+ \nu \int^{t_{n+1}}_{t_n} \Lg \De\De_j u\left(t_{n+1}\right)- \De\De_j u, |\De_j e^{n+1}|^{p-2} \De_j e^{n+1} \Rg\, dt' \nonumber\\
    &\coloneqq I_{2,1}+I_{2,2}.
\end{align*}
Note that

\begin{align*}
    I_{2,1}\coloneqq & \nu \int^{t_{n+1}}_{t_n} \Lg \De\De_j u^{n+1}- \De\De_j u\left(t_{n+1}\right), |\De_j e^{n+1}|^{p-2} \De_j e^{n+1} \Rg \, dt'\\
    =& \nu\tau \Lg \De\De_j e^{n+1} , |\De_j e^{n+1}|^{p-2} \De_j e^{n+1} \Rg\\.
\end{align*}
It follows by \eqref{Planch} that
\begin{equation}
     I_{2,1}\le-\nu\tau2^{2j}\|\De_j e^{n+1}\|^p_{L^p}\le 0.
\end{equation}
Similarly we can estimate $I_{2,2}$.
\begin{align*}
     I_{2,2} &= -\nu 2^{2j} \int^{t_{n+1}}_{t_n} \Lg \De_j u\left(t_{n+1}\right)- \De_j u, |\De_j e^{n+1}|^{p-2} \De_j e^{n+1} \Rg\, dt'\nonumber\\
     &\leq \nu 2^{2j} \|\De_j e^{n+1}\|^{p-1}_{L^p}  \int^{t_{n+1}}_{t_n} \|\De_j u\left(t_{n+1}\right)- \De_j u\|_{L^p} \, dt'\nonumber\\
     &\leq \nu 2^{2j} \|\De_j e^{n+1}\|^{p-1}_{L^p}  \int^{t_{n+1}}_{t_n} \|\int^{t_{n+1}}_{t'} \partial_t \De_j u\, ds \|_{L^p}\, dt'\nonumber\\
     & \leq \nu 2^{2j} \|\De_j e^{n+1}\|^{p-1}_{L^p}   \int^{t_{n+1}}_{t_n} \int^{t_{n+1}}_{t'} \| \partial_t \De_j u  \|_{L^p} \, ds dt'. \nonumber
\end{align*}
Noting that by the Leray projected PDE \eqref{NSE} we have 
\begin{equation*}
    \pa_t \De_j u =-\LP\Delta_j (u\cdot \na u)+\nu\Delta\Delta_j u.
\end{equation*}
Therefore we can derive that
\begin{equation}
         I_{2,2}\leq  \nu \tau^2 2^{2j}  \left( \|\De_j \left( u\cdot \nabla u\right)\|_{L^\infty_tL^p} + \nu \|\De_j \De u\|_{L^\infty_tL^p}\right) \|\De_j e^{n+1}\|^{p-1}_{L^p} . \nonumber
\end{equation}
It then follows that
\begin{align}\label{I2}
     I_{2} \leq \nu \tau^2  2^{2j}  \left( \|\De_j \left( u\cdot \nabla u\right)\|_{L^\infty_tL^p} + \nu \|\De_j \De u\|_{L^\infty_tL^p}\right) \|\De_j e^{n+1}\|^{p-1}_{L^p} . 
\end{align}
Notice that we have
\begin{equation}
\begin{aligned}\label{chaxiang}
    u\cdot\nabla u - u^n\cdot\nabla u^{n+1}&= u\cdot \nabla u - u\left(t_n\right)\cdot\nabla u\\
    &\quad+ u\left(t_n\right)\cdot\nabla u - u\left(t_n\right)\cdot\nabla u\left(t_{n+1}\right)\\
    &\quad+u\left(t_n\right)\cdot\nabla u\left(t_{n+1}\right)-u^n\cdot\nabla u\left(t_{n+1}\right)\\
    &\quad+ u^n\cdot\nabla u\left(t_{n+1}\right) - u^n\cdot\nabla u^{n+1}.
\end{aligned}
\end{equation}
Then we can rewrite $I_3$ as follows
\begin{align*}
    I_3&= \int^{t_{n+1}}_{t_n} \Lg  \LP \De_j \left ( \left( u-u\left(t_n\right)\right)\cdot \nabla u\right),  |\De_j e^{n+1}|^{p-2} \De_j e^{n+1}\Rg \,dt'\\
    &\quad+\int^{t_{n+1}}_{t_n} \Lg  \LP \De_j \left ( u\left(t_n\right)\cdot \nabla \left(u-u\left(t_{n+1}\right)\right)\right), |\De_j e^{n+1}|^{p-2} \De_j e^{n+1} \Rg \,dt' \\
    &\quad+ \int^{t_{n+1}}_{t_n} \Lg  \LP \De_j  \left ( \left( u\left(t_n\right)-u^n\right)\cdot \nabla u\left(t_{n
    +1}\right)\right), |\De_j e^{n+1}|^{p-2} \De_j e^{n+1} \Rg \,dt'\\
    &\quad+  \int^{t_{n+1}}_{t_n}   \Lg  \LP \De_j  \left (  u^n \cdot \nabla \left (u\left(t_{n
    +1}\right)-u^{n+1}\right)\right),|\De_j e^{n+1}|^{p-2} \De_j e^{n+1}\Rg \,dt'\\
    &:= I_{3,1} + I_{3,2} + I_{3,3} + I_{3,4}
\end{align*} 
Using \eqref{NSE} and the uniform estimates \eqref{1.6}, we have
\begin{equation}\label{I31}
\begin{aligned}
    I_{3,1}  =& \int^{t_{n+1}}_{t_n} \Lg   \left ( \left( u-u\left(t_n\right)\right)\cdot \nabla \LP \De_j u\right),  |\De_j e^{n+1}|^{p-2} \De_j e^{n+1}\Rg \,dt'\\
    &- \int^{t_{n+1}}_{t_n}  \Lg   \left[ \left( u-u\left(t_n\right)\right)\cdot \nabla, \LP \De_j \right]u,  |\De_j e^{n+1}|^{p-2} \De_j e^{n+1}\Rg \, dt'\\
    \leq &\tau \left(\|u-u\left(t_n\right)\|_{L^\infty_tL^\infty}\| \nabla\De_j u\|_{L^\infty_tL^p} + \|\left[ \left( u-u\left(t_n\right)\right)\cdot \nabla, \LP \De_j \right]u\|_{L^\infty_tL^p}\right)\|\De_j e^{n+1}\|^{p-1}_{L^p}\\
    \leq &C \tau^2  2^j \left(\nu \|u_0\|_{B^2_{\infty,1}} +\|u_0\|^2_{B^2_{\infty,1}} \right)\|\De_j u\|_{L^\infty_tL^p} \|\De_j e^{n+1}\|^{p-1}_{L^p}\\
    &+ C\tau \|\left[ \left( u-u\left(t_n\right)\right)\cdot \nabla, \LP \De_j \right]u\|_{L^\infty_tL^p}\|\De_j e^{n+1}\|^{p-1}_{L^p},
 \end{aligned}    
\end{equation}
and
\begin{equation}\label{I32}
    \begin{aligned}
I_{3,2} =& \int^{t_{n+1}}_{t_n} \Lg   \left ( u\left(t_n\right)\cdot \nabla \LP \De_j \left(u-u\left(t_{n+1}\right)\right)\right), |\De_j e^{n+1}|^{p-2} \De_j e^{n+1} \Rg\, dt' \\
&- \int^{t_{n+1}}_{t_n} \Lg  \left[ u\left(t_n\right)\cdot \nabla, \LP \De_j\right] \left(u-u\left(t_{n+1}\right)\right), |\De_j e^{n+1}|^{p-2} \De_j e^{n+1} \Rg \,dt' \\
\leq &\tau \left(  \|u\left( t_n\right)\|_{L^\infty}\|\nabla \De_j  \left(u-u\left(t_{n+1}\right)\right) \|_{L^\infty_tL^p} + \|\left[ u\left(t_n\right)\cdot \nabla, \LP \De_j\right] \left(u-u\left(t_{n+1}\right)\right)\|_{L^\infty_tL^p}\right) \|\De_j e^{n+1}\|^{p-1}_{L^p}\\
\leq& C \tau^2  2^j \|u_0\|_{B^1_{\infty,1}} \left( \|\De_j \left( u\cdot \nabla u\right)\|_{L^\infty_tL^p} + \nu \|\De_j \De u\|_{L^\infty_tL^p}\right)\|\De_j e^{n+1}\|^{p-1}_{L^p}\\
&+C\tau \|\left[ u\left(t_n\right)\cdot \nabla, \LP \De_j\right] \left(u-u\left(t_{n+1}\right)\right)\|_{L^\infty_tL^p} \|\De_j e^{n+1}\|^{p-1}_{L^p}.
\end{aligned}
\end{equation}
Similarly, we infer that
\begin{align}\label{I33}
    I_{3,3}&\leq \tau \|\De_j \left( e^n\cdot \nabla u\left(t_{n+1}\right)\right)\|_{L^p}   \|\De_j e^{n+1}\|^{p-1}_{L^p} .
\end{align}
Lastly, by the divergence free condition it follows that
\begin{equation}\label{I34}
\begin{aligned}
    I_{3, 4}=&\int^{t_{n+1}}_{t_n}   \Lg   \left (  u^n \cdot \nabla \LP \De_j\left (u\left(t_{n
    +1}\right)-u^{n+1}\right)\right),|\De_j e^{n+1}|^{p-2} \De_j e^{n+1}\Rg \, dt'\\
    &-\int^{t_{n+1}}_{t_n}   \Lg   \left [  u^n \cdot \nabla,  \LP \De_j\left (u\left(t_{n
    +1}\right)-u^{n+1}\right)\right],|\De_j e^{n+1}|^{p-2} \De_j e^{n+1}\Rg \, dt'\\
    =&-\int^{t_{n+1}}_{t_n}   \Lg   \left [  u^n \cdot \nabla,  \LP \De_j\left (u\left(t_{n
    +1}\right)-u^{n+1}\right)\right],|\De_j e^{n+1}|^{p-2} \De_j e^{n+1}\Rg\, dt'\\
    \leq&\tau \| \left [  u^n \cdot \nabla,  \LP \De_j e^{n+1}\right] \|_{L^p}\|\De_j e^{n+1}\|^{p-1}_{L^p}.
\end{aligned}
\end{equation}
Therefore collecting all the estimates \eqref{I31}-\eqref{I34} we can conclude that
\begin{equation}\label{I3}
    \begin{aligned}
        I_3 \leq &C \tau^2 2^j \left(\nu \|u_0\|_{B^2_{\infty,1}} +\|u_0\|^2_{B^2_{\infty,1}} \right)\|\De_j u\|_{L^\infty_tL^p} \|\De_j e^{n+1}\|^{p-1}_{L^p}\\
    &+ C\tau \|\left[ \left( u-u\left(t_n\right)\right)\cdot \nabla, \LP \De_j \right]u\|_{L^\infty_tL^p}\|\De_j e^{n+1}\|^{p-1}_{L^p} \\
    &+ \tau^2  2^j \|u_0\|_{B^1_{\infty,1}} \left( \|\De_j \left( u\cdot \nabla u\right)\|_{L^\infty_tL^p} + \nu \|\De_j \De u\|_{L^\infty_tL^p}\right)\|\De_j e^{n+1}\|^{p-1}_{L^p}\\
&+C\tau \|\left[ u\left(t_n\right)\cdot \nabla, \LP \De_j\right] \left(u-u\left(t_{n+1}\right)\right)\|_{L^\infty_tL^p} \|\De_j e^{n+1}\|^{p-1}_{L^p}\\
&+\tau \|\De_j \left( e^n\cdot \nabla u\left(t_{n+1}\right)\right)\|_{L^p}   \|\De_j e^{n+1}\|^{p-1}_{L^p}\\
&+C\tau \| \left [  u^n \cdot \nabla,  \LP \De_j e^{n+1} \right] \|_{L^p}\|\De_j e^{n+1}\|^{p-1}_{L^p}.
 \end{aligned}
\end{equation}
Based on the estimates \eqref{4.2}, \eqref{I1}, \eqref{I2} and\eqref{I3} above we finally conclude (by dividing both sides by $\|\De_j e^{n+1}\|_{L^p}^{p-1}$) that
\begin{equation}
\begin{aligned}
    \|\De_j e^{n+1}\|_{L^p} \leq& \|\De_j e^n\|_{L^p}+\nu \tau^2  2^{2j}  \left( \|\De_j \left( u\cdot \nabla u\right)\|_{L^\infty_tL^p} + \nu \|\De_j \De u\|_{L^\infty_tL^p}\right)\nonumber\\
    &+ \tau^2  2^j \left(\nu \|u_0\|_{B^2_{\infty,1}} +\|u_0\|^2_{B^2_{\infty,1}} \right)\| \De_j u\|_{L^\infty_tL^p} \nonumber\\
    &+ C\tau \|\left[ \left( u-u\left(t_n\right)\right)\cdot \nabla, \LP \De_j \right]u\|_{L^\infty_tL^p} \nonumber\\
    &+ \tau^2  2^j \|u_0\|_{B^1_{\infty,1}} \left( \|\De_j \left( u\cdot \nabla u\right)\|_{L^\infty_tL^p} + \nu \|\De_j \De u\|_{L^\infty_tL^p}\right)\nonumber\\
&+C\tau \|\left[ u\left(t_n\right)\cdot \nabla, \LP \De_j\right] \left(u-u\left(t_{n+1}\right)\right)\|_{L^\infty_tL^p} \nonumber\\
&+\tau \|\De_j \left( e^n\cdot \nabla u\left(t_{n+1}\right)\right)\|_{L^p}\nonumber\\
&+C\tau \| \left [  u^n \cdot \nabla,  \LP \De_j e^{n+1} \right] \|_{L^p}.
\end{aligned}
\end{equation}
Using Lemma~\ref{PL} and the uniform estimate \eqref{1.6}, we have
\begin{equation}\label{4.12}
    \| e^n\cdot \nabla u\left(t_{n+1}\right)\|_{B^0_{\infty,1}} \lesssim \|e^n\|_{B^0_{\infty,1}}\|u\|_{L^\infty_t B^1_{\infty,1}}\lesssim\|e^n\|_{B^0_{\infty,1}}\|u_0\|_{B^1_{\infty,1}}.
\end{equation}
On the other hand, by the commutator estimate Lemma~\ref{CI} and the stability estimate \eqref{1.6} we have
\begin{equation}\label{4.13}
     \sum_j \| \left [  u^n \cdot \nabla,  \LP \De_j e^{n+1} \right] \|_{L^\infty}\lesssim \|e^{n+1}\|_{B^0_{\infty,1}}\|u^n\|_{B^1_{\infty,1}}\lesssim\|e^{n+1}\|_{B^0_{\infty,1}}\|u_0\|_{B^1_{\infty,1}}.
\end{equation}
As a result, by \eqref{4.12}-\eqref{4.13}, letting $p\to\infty$ and summing $j$, we know
\begin{align*}
    \|e^{n+1}\|_{B^0_{\infty,1}} \leq &\|e^n\|_{B^0_{\infty,1}} +\nu\tau^2 \left(\| u_0\|_{B^1_{\infty,1}}\| u_0\|_{B^3_{\infty,1}} +\| u_0\|^2_{B^2_{\infty,1}}\right) + \nu^2\tau^2\| u_0\|_{B^4_{\infty,1}}\nonumber\\
    &+\tau^2   \left(\nu \|u_0\|_{B^2_{\infty,1}} +\|u_0\|^2_{B^2_{\infty,1}} \right)\| u_0\|_{B^1_{\infty,1}}\nonumber\\
&+ C\tau\|u-u\left(t_n\right)\|_{L^\infty_t B^1_{\infty,1}}\|u_0\|_{B^1_{\infty,1}}\nonumber\\
    &+ C\tau^2 \|u_0\|_{B^1_{\infty,1}}\left( \|u_0\|_{B^1_{\infty,1}}\|u_0\|_{B^2_{\infty,1}}+\|u_0\|^2_{B^1_{\infty,1}}+\nu\|u_0\|_{B^3_{\infty,1}} \right)\nonumber\\
    &+C\tau \|u_0\|_{B^1_{\infty,1}}\|u-u\left(t_{n+1}\right)\|_{L^\infty_t B^1_{\infty,1}}\nonumber\\
    &+C\tau\|e^n\|_{B^0_{\infty,1}}\|u_0\|_{B^1_{\infty,1}}\nonumber\\
    &+C\tau\|e^{n+1}\|_{B^0_{\infty,1}}\|u_0\|_{B^1_{\infty,1}}\nonumber
\end{align*}
   By introducing the time integral of $u-u(t_n)$ and $u-u(t_{n+1})$ we get
   \begin{align*}
     \|e^{n+1}\|_{B^0_{\infty,1}}\leq& \|e^n\|_{B^0_{\infty,1}} +\nu\tau^2 \left(\| u_0\|_{B^1_{\infty,1}}\| u_0\|_{B^3_{\infty,1}} +\| u_0\|^2_{B^2_{\infty,1}}\right) + \nu^2\tau^2\| u_0\|_{B^4_{\infty,1}}\nonumber\\
    &+\tau^2   \left(\nu \|u_0\|_{B^2_{\infty,1}} +\|u_0\|^2_{B^2_{\infty,1}} \right)\| u_0\|_{B^1_{\infty,1}}\nonumber\\
&+ C\tau^2\left(\|u_0\|_{B^1_{\infty,1}}\|u_0\|_{B^2_{\infty,1}}+\|u_0\|^2_{B^1_{\infty,1}} +\nu\|u_0\|_{B^3_{\infty,1}}\right)\|u_0\|_{B^1_{\infty,1}}\nonumber\\
    &+ C\tau^2 \|u_0\|_{B^1_{\infty,1}}\left( \|u_0\|_{B^1_{\infty,1}}\|u_0\|_{B^2_{\infty,1}}+\|u_0\|^2_{B^1_{\infty,1}}+\nu\|u_0\|_{B^3_{\infty,1}} \right)\nonumber\\
    &+C\tau\|e^n\|_{B^0_{\infty,1}}\|u_0\|_{B^1_{\infty,1}}\nonumber\\
&+C\tau\|e^{n+1}\|_{B^0_{\infty,1}}\|u_0\|_{B^1_{\infty,1}}\nonumber.
\end{align*}
Therefore if $u_0\in B^{4}_{\infty,1}$, this implies that
\begin{equation}\label{4.14}
\begin{aligned}
    \|u^{n+1}-u\left(t_{n+1}\right)\|_{B^0_{\infty,1}} \leq \frac{1+C^* \tau}{1-C^* \tau}  \|u^{n}-u\left(t_{n}\right)\|_{B^0_{\infty,1}} + \frac{C^* (1+\nu^2) \tau^2}{1-C^* \tau},
\end{aligned}
\end{equation}
for some $C^*$  depending on $\|u_0\|_{B^4_{\infty,1}}$. Applying the discrete Gr\"onwall's inequality by iterating the inequality \eqref{4.14} we get the desired error estimate:
\begin{equation}
     \sup_n \|u^{n}-u\left(t_{n}\right)\|_{B^0_{\infty,1}}\le C\tau
\end{equation}
for some $C>0$, which then concludes \eqref{1.8a}. The higher order error estimate \eqref{1.8b} is a direct corollary and we then omit the details.

\section{$B^0_{\infty,2}$ error estimates}\label{sec:Besov2}

Similar to the $B^0_{\infty,1}$ error estimates, we again perform the $L^p$-estimates for Littlewood-Paley projected $\De_j e^{n+1}:=\De_j u^{n+1}-\De_j u\left(t_{n+1}\right)$. We multiply \eqref{4.1} by $|\De_j e^{n+1}|^{p-2}\De_j e^{n+1}$ and integrate to obtain that 
\begin{equation}\label{5.1}
\begin{aligned}
    &\Lg \De_j e^{n+1}, |\De_j e^{n+1}|^{p-2} \De_j e^{n+1}  \Rg \\&= \Lg \De_j e^{n},|\De_j e^{n+1}|^{p-2} \De_j e^{n+1}\Rg\\
    &\quad + \nu \int^{t_{n+1}}_{t_n} \Lg \De \De_j u^{n+1}- \De \De_j u , |\De_j e^{n+1}|^{p-2} \De_j e^{n+1} \Rg dt' \\
    &\quad+  \int^{t_{n+1}}_{t_n} \Lg \LP \De_j (u \cdot \na u - u^{n} \cdot \na u^{n+1}), |\De_j e^{n+1}|^{p-2} \De_j e^{n+1} \Rg dt'\\
    &:= I'_1 + I'_2 + I'_3.
    \end{aligned}
\end{equation}
Here, the estimates of $I^{'}_1$ is same as the $I_1$ and therefore the following holds:
\begin{align}\label{I1'}
    I'_1 \leq \|\De_j e^{n+1}\|^{p-1}_{L^p}\|\De_j e^{n}\|_{L^p}.
\end{align}
For $I'_2$, we have
\begin{align*}
    I'_2 &=\nu \int^{t_{n+1}}_{t_n} \Lg \De\De_j u^{n+1}- \De\De_j u\left(t_{n+1}\right),  |\De_j e^{n+1}|^{p-2} \De_j e^{n+1} \Rg\, dt'\nonumber\\
    &\quad+ \nu \int^{t_{n+1}}_{t_n} \Lg \De\De_j u\left(t_{n+1}\right)- \De\De_j u, |\De_j e^{n+1}|^{p-2} \De_j e^{n+1} \Rg\, dt' \nonumber\\
    &\coloneqq I'_{2,1}+I'_{2,2}.
\end{align*}
The estimates of $I'_{2,1}$ is similar to $I_{2,1}$. Indeed we have
\begin{equation}\label{I21'}
    \begin{aligned}
    I'_{2,1}\coloneqq & \nu \int^{t_{n+1}}_{t_n} \Lg \De\De_j u^{n+1}- \De\De_j u\left(t_{n+1}\right), |\De_j e^{n+1}|^{p-2} \De_j e^{n+1} \Rg \, dt'\\
    =& \nu\tau \Lg \De\De_j e^{n+1} , |\De_j e^{n+1}|^{p-2} \De_j e^{n+1} \Rg\\
    \le&-\nu\tau2^{2j}\|\De_j e^{n+1}\|^p_{L^p}.
\end{aligned}
\end{equation}
Unlike the previous $B^0_{\infty,1}$ error case, the $-\nu\tau2^{2j}\|\De_j e^{n+1}\|^p_{L^p}$ term will play an important role in reducing the regularity. To continue, for $I^{'}_{2,2}$ we get
\begin{equation}\label{I22'}
\begin{aligned}
     I'_{2,2} \coloneqq& -\nu 2^{2j} \int^{t_{n+1}}_{t_n} \Lg \De_j u\left(t_{n+1}\right)- \De_j u, |\De_j e^{n+1}|^{p-2} \De_j e^{n+1} \Rg\, dt'\\
     \le& \nu 2^{2j} \|\De_j e^{n+1}\|^{p-1}_{L^p}  \int^{t_{n+1}}_{t_n} \|\De_j u\left(t_{n+1}\right)- \De_j u\|_{L^p} \, dt'\\
     \le& \nu 2^{2j} \|\De_j e^{n+1}\|^{p-1}_{L^p}  \int^{t_{n+1}}_{t_n} \|\int^{t_{n+1}}_{t'} \partial_t \De_j u \, ds \|_{L^p} \, dt'\\
      \le& \nu 2^{2j} \|\De_j e^{n+1}\|^{p-1}_{L^p}   \int^{t_{n+1}}_{t_n} \int^{t_{n+1}}_{t'} \| \partial_t \De_j u  \|_{L^p} \, ds\, dt' \\
     \le& \frac{1}{2}\nu\tau2^{2j}\|\De_j e^{n+1}\|^p_{L^p}\\
     &+\frac{1}{2} \nu \tau^3 2^{2j}  \left( \|\De_j \left( u\cdot \nabla u\right)\|^2_{L^\infty_tL^p} + \nu^2 \|\De_j \De u\|^2_{L^\infty_tL^p}\right) \|\De_j e^{n+1}\|^{p-2}_{L^p}.\\
\end{aligned}
\end{equation}
It then follows by \eqref{I21'} and \eqref{I22'} that
\begin{align}\label{I2'}
     I'_{2} \leq\frac{1}{2} \nu \tau^3 2^{2j}  \left( \|\De_j \left( u\cdot \nabla u\right)\|^2_{L^\infty_tL^p} + \nu^2 \|\De_j \De u\|^2_{L^\infty_tL^p}\right) \|\De_j e^{n+1}\|^{p-2}_{L^p}. 
\end{align}
For $I'_3$, we will estimate similarly as before:
\begin{equation}
\begin{aligned}
      I'_3&= \int^{t_{n+1}}_{t_n} \Lg  \LP \De_j \left ( \left( u-u\left(t_n\right)\right)\cdot \nabla u\right),  |\De_j e^{n+1}|^{p-2} \De_j e^{n+1}\Rg dt'\nonumber\\
    &\quad+\int^{t_{n+1}}_{t_n} \Lg  \LP \De_j \left ( u\left(t_n\right)\cdot \nabla \left(u-u\left(t_{n+1}\right)\right)\right), |\De_j e^{n+1}|^{p-2} \De_j e^{n+1} \Rg dt' \nonumber\\
    &\quad+ \int^{t_{n+1}}_{t_n} \Lg  \LP \De_j  \left ( \left( u\left(t_n\right)-u^n\right)\cdot \nabla u\left(t_{n
    +1}\right)\right), |\De_j e^{n+1}|^{p-2} \De_j e^{n+1} \Rg dt'\nonumber\\
    &\quad+  \int^{t_{n+1}}_{t_n}   \Lg  \LP \De_j  \left (  u^n \cdot \nabla \left (u\left(t_{n
    +1}\right)-u^{n+1}\right)\right),|\De_j e^{n+1}|^{p-2} \De_j e^{n+1}\Rg dt'\nonumber\\
    &:= I'_{3,1} + I'_{3,2} + I'_{3,3} + I'_{3,4}
\end{aligned}
\end{equation}
We first consider the term $I'_{3,1}$. Unlike the $B^0_{\infty,1}$ case, we shall only estimate by H\"older's inequality:
\begin{equation}\label{I31'}
I'_{3,1} \leq \tau\|\De_j \left ( \left( u-u\left(t_n\right)\right)\cdot \nabla u\right) \|_{L^\infty_tL^p}\| \De_j e^{n+1}\|^{p-1}_{L^p}.    
\end{equation}
Similarly we have
\begin{equation}\label{I32'}
    \begin{aligned}
I'_{3,2} 
\leq&  \tau^2  2^j \|u_0\|_{B^1_{\infty,1}} \left( \|\De_j \left( u\cdot \nabla u\right)\|_{L^\infty_tL^p} + \nu \|\De_j \De u\|_{L^\infty_tL^p}\right)\|\De_j e^{n+1}\|^{p-1}_{L^p}\\
&+C\tau \|\left[ u\left(t_n\right)\cdot \nabla, \LP \De_j\right] \left(u-u\left(t_{n+1}\right)\right)\|_{L^\infty_tL^p} \|\De_j e^{n+1}\|^{p-1}_{L^p}
\end{aligned}
\end{equation}
and
\begin{align}\label{I33'}
    I'_{3,3}&\leq \tau \|\De_j \left( e^n\cdot \nabla u\left(t_{n+1}\right)\right)\|_{L^p}   \|\De_j e^{n+1}\|^{p-1}_{L^p} .
\end{align}
Lastly, again by the divergence free condition it follows that
\begin{equation}\label{I34'}
\begin{aligned}
    I'_{3, 4}
    =&-\int^{t_{n+1}}_{t_n}   \Lg   \left [  u^n \cdot \nabla,  \LP \De_j\left (u\left(t_{n
    +1}\right)-u^{n+1}\right)\right],|\De_j e^{n+1}|^{p-2} \De_j e^{n+1}\Rg\, dt'\\
    \leq&\tau \| \left [  u^n \cdot \nabla,  \LP \De_j e^{n+1}\right] \|_{L^p}\|\De_j e^{n+1}\|^{p-1}_{L^p}.
\end{aligned}
\end{equation}
Therefore by collecting the estimates \eqref{I1'}, \eqref{I2'} and \eqref{I31'}-\eqref{I34'} we have
\begin{equation}\label{5.10}
\begin{aligned}
    \|\De_j e^{n+1}\|^2_{L^p} \leq &\|\De_j e^n\|_{L^p}\|\De_j e^{n+1}\|_{L^p} +\frac{1}{2} \nu \tau^3 2^{2j}  \left( \|\De_j \left( u\cdot \nabla u\right)\|^2_{L^\infty_tL^p} + \nu^2 \|\De_j \De u\|^2_{L^\infty_tL^p}\right)\|\De_j e^{n+1}\|_{L^p}\\
    &+ C\tau \|\De_j\left ( \left( u-u\left(t_n\right)\right)\cdot \nabla u\right)  \|_{L^\infty_tL^p} \|\De_j e^{n+1}\|_{L^p}\\
     &+ \tau^2  2^j \|u_0\|_{B^1_{\infty,1}} \left( \|\De_j \left( u\cdot \nabla u\right)\|_{L^\infty_tL^p} + \nu \|\De_j \De u\|_{L^\infty_tL^p}\right)\|\De_j e^{n+1}\|_{L^p}\\
&+C\tau \|\left[ u\left(t_n\right)\cdot \nabla, \LP \De_j\right] \left(u-u\left(t_{n+1}\right)\right)\|_{L^\infty_tL^p}\|\De_j e^{n+1}\|_{L^p} \\
&+\tau \|\De_j \left( e^n\cdot \nabla u\left(t_{n+1}\right)\right)\|_{L^p}\|\De_j e^{n+1}\|_{L^p}\\
&+C\tau \| \left [  u^n \cdot \nabla,  \LP \De_j e^{n+1} \right] \|_{L^p}\|\De_j e^{n+1}\|_{L^p}\\
\end{aligned}    
\end{equation}
It is worth pointing our there that the inequality above was derived by dividing both sides by $\|\De_j e^{n+1}\|_{L^p}^{p-2}$. This is the main difference in the $B^0_{\infty,2}$ setting as in the $B^0_{\infty,1}$ case. Note that by Cauchy-Schwarz inequality and Lemma~\ref{PL2}, we have the following estimates hold for any $t\in(t_n,t_{n+1})$:
\begin{equation}\label{5.11}
\begin{aligned}
   &\sum_j \|\De_j\left ( \left( u-u\left(t_n\right)\right)\cdot \nabla u\right)  \|_{L^\infty} \|\De_j e^{n+1}\|_{L^\infty}\\
   \lesssim&\left\|(\|\De_j\left ( \left( u-u\left(t_n\right)\right)\cdot \nabla u\right)  \|_{L^\infty})_j\right\|_{l^2}\left\|(\|\De_j e^{n+1}\|_{L^\infty})_j\right\|_{l^2}\\
   \lesssim& \| (u-u(t_n))\cdot \na u\|_{B^0_{\infty,2}}\|e^{n+1}\|_{B^0_{\infty,2}}\\
   \lesssim& \|u-u(t_n)\|_{B^0_{\infty,2}}\|u\|_{B^2_{\infty,2}}\|e^{n+1}\|_{B^0_{\infty,2}},
\end{aligned}
\end{equation}
and
\begin{equation}\label{5.12}
\begin{aligned}
   &\sum_j \|\De_j \left( e^n\cdot \nabla u\left(t_{n+1}\right)\right)\|_{L^\infty}\|\De_j e^{n+1}\|_{L^\infty}\\
   \lesssim& \| e^n\cdot \na u(t_{n+1})\|_{B^0_{\infty,2}}\|e^{n+1}\|_{B^0_{\infty,2}}\\
   \lesssim& \|e^n\|_{B^0_{\infty,2}}\|u(t_{n+1})\|_{B^2_{\infty,2}}\|e^{n+1}\|_{B^0_{\infty,2}}.
\end{aligned}
\end{equation}
By Lemma~\ref{CI} and Lemma~\ref{embed} we have
\begin{equation}\label{5.13}
    \begin{aligned}
        &\sum_j\|\left[ u\left(t_n\right)\cdot \nabla, \LP \De_j\right] \left(u-u\left(t_{n+1}\right)\right)\|_{L^\infty}\|\De_j e^{n+1}\|_{L^\infty}\\
        \lesssim& \left\| \left(\|\left[ u\left(t_n\right)\cdot \nabla, \LP \De_j\right] \left(u-u\left(t_{n+1}\right)\right)\|_{L^\infty}\right)_j\right\|_{l^2}\|e^{n+1}\|_{B^0_{\infty,2}}\\
        \lesssim& \|u(t_n)\|_{B^1_{\infty,1}}\|u-u\left(t_{n+1}\right)\|_{B^0_{\infty,2}}\|e^{n+1}\|_{B^0_{\infty,2}}\\
        \lesssim& \|u(t_n)\|_{B^2_{\infty,2}}\|u-u\left(t_{n+1}\right)\|_{B^0_{\infty,2}}\|e^{n+1}\|_{B^0_{\infty,2}}.
    \end{aligned}
\end{equation}
Moreover, we have
\begin{equation}
    \begin{aligned}
        \sum_j \| \left [  u^n \cdot \nabla,  \LP \De_j e^{n+1} \right] \|_{L^\infty}\|\De_j e^{n+1}\|_{L^\infty}
\lesssim\|u^n\|_{B^2_{\infty,2}}\|e^{n+1}\|^2_{B^0_{\infty,2}}.
    \end{aligned}
\end{equation}
Now by letting $p\to\infty$, summing $j$, applying the uniform estimates\eqref{1.7}  and stability estimates Theorem~\ref{Thm1.1} and collecting all of the estimates \eqref{5.10}-\eqref{5.13} we have
\begin{align*}
    \|e^{n+1}\|^2_{B^0_{\infty,2}}\leq& \|e^{n}\|_{B^0_{\infty,2}} \|e^{n+1}\|_{B^0_{\infty,2}}+ C\nu\tau^3\left( \|u_0\|^4_{B^2_{\infty,2}} + \nu^2 \|u_0\|^2_{B^3_{\infty,2}}\right)\|e^{n+1}\|_{B^0_{\infty,2}}\nonumber\\
    &+C\tau\|u-u\left(t_n\right)\|_{B^0_{\infty,2}}\|u_0\|_{B^2_{\infty,2}}\|e^{n+1}\|_{B^0_{\infty,2}}\nonumber\\
    &+\tau^2 \|u_0\|_{B^2_{\infty,2}}\left(  \|u_0\|^2_{B^2_{\infty,2}} + \nu \|u_0\|_{B^3_{\infty,2}}          \right)\|e^{n+1}\|_{B^0_{\infty,2}}\nonumber\\
    &+C\tau\|u_0\|_{B^2_{\infty,2}}\|u-u\left(t_{n+1}\right)\|_{B^0_{\infty,2}}\|e^{n+1}\|_{B^0_{\infty,2}}\nonumber\\
&+C\tau\|e^n\|_{B^0_{\infty,2}}\|u_0\|_{B^2_{\infty,2}}\|e^{n+1}\|_{B^0_{\infty,2}} \nonumber\\
    &+C\tau \|u_0\|_{B^2_{\infty,2}}\|e^{n+1}\|^2_{B^0_{\infty,2}}\nonumber.
    \end{align*}
Again by introducing the time integral of $u-u(t_n)$ and $u-u(t_{n+1})$ we get
    \begin{align*}
   \|e^{n+1}\|^2_{B^0_{\infty,2}}\leq &\|e^{n}\|_{B^0_{\infty,2}} \|e^{n+1}\|_{B^0_{\infty,2}}+ C\nu\tau^3\left( \|u_0\|^4_{B^2_{\infty,2}} + \nu^2 \|u_0\|^2_{B^3_{\infty,2}}\right)\|e^{n+1}\|_{B^0_{\infty,2}}\nonumber\\
    &+C\tau^2\left( \|u_0\|^2_{B^1_{\infty,2}} +\nu \|u_0\|_{B^2_{\infty,2}}\right)\|u_0\|_{B^2_{\infty,2}}\|e^{n+1}\|_{B^0_{\infty,2}}\nonumber\\
    &+\tau^2 \|u_0\|_{B^2_{\infty,2}}\left(  \|u_0\|^2_{B^2_{\infty,2}} + \nu \|u_0\|_{B^3_{\infty,2}}          \right)\|e^{n+1}\|_{B^0_{\infty,2}}\nonumber\\
&+C\tau^2\|u_0\|_{B^2_{\infty,2}}\left(\|u_0\|^2_{B^1_{\infty,2}}+\nu\|u_0\|_{B^2_{\infty,2}}\right)\|e^{n+1}\|_{B^0_{\infty,2}}\\   
&+C\tau\|e^n\|_{B^0_{\infty,2}}\|u_0\|_{B^2_{\infty,2}}\|e^{n+1}\|_{B^0_{\infty,2}} \nonumber\\
    &+C\tau \|u_0\|_{B^2_{\infty,2}}\|e^{n+1}\|^2_{B^0_{\infty,2}}\nonumber.
\end{align*}
By Cauchy-Schwarz inequality one can get
\begin{align*}
   \|e^{n+1}\|^2_{B^0_{\infty,2}}\leq \|e^{n}\|^2_{B^0_{\infty,2}}+C\tau (\|e^{n+1}\|^2_{B^0_{\infty,2}}+\|e^{n}\|^2_{B^0_{\infty,2}})+C\tau^3(1+\nu+\nu\tau^2+\nu^3\tau^2),
\end{align*}
which implies that
\begin{equation}\label{5.x}
\begin{aligned}
    \|u^{n+1}-u\left(t_{n+1}\right)\|^2_{B^0_{\infty,2}} \leq \frac{1+C^* \tau}{1-C^* \tau}  \|u^{n}-u\left(t_{n}\right)\|^2_{B^0_{\infty,2}} + \frac{C^* (1+\nu+\nu\tau^2+\nu^3\tau^2) \tau^3}{1-C^* \tau},
\end{aligned}
\end{equation}
where $C^*$  depends on $\|u_0\|_{B^3_{\infty,2}}$. Iterating the inequality \eqref{5.x} we arrive at 
\begin{align*}
   \sup_n \|u^{n}-u\left(t_{n}\right)\|^2_{B^0_{\infty,2}}\leq C
    \tau^2,
\end{align*}
for some $C>0$. This then concludes \eqref{1.9a}. The higher order error estimate \eqref{1.9b} is a direct corollary and we also leave the details to the readers.

\section{Extension to the full discretization}\label{sec:full}
In this section we briefly discuss how to extend the proposed semi-implicit scheme to full discretization. More precisely speaking we give a brief instruction on how to apply finite element methods or Fourier spectral methods.

\subsection{Finite element method}
Our semi-implicit method can be extended with standard finite element methods. For example, a pair of finite element subspace of mesh size $h$, 
$V_h\times Q_h \subset H_0^1\times L_0^2$ with the following properties are able to fulfill the task. Here $H_0^1$ and $L^2_0$ contain $H^1$ and $L^2$ functions of zero mean respectively. Indeed we refer the readers to \cite{LMU22} and \cite{LQY22} for more a detailed set-up. 

\begin{enumerate}

\item[(P1)] There exists a linear projection operator $\Pi_h: H^1_0\rightarrow V_h$ such that 
\begin{enumerate}
\item[(i)]
$\nabla\cdot \Pi_hv = P_{Q_h} \nabla\cdot v$ for $v\in H^1_0$, where $P_{Q_h}:L^2_0\rightarrow Q_h $ denotes the $L^2$-orthogonal projection. 

\item[(ii)]
The following approximation property holds for $v\in H^1_0\cap H^m$:
\begin{align}\label{Fortin-Error} 
\|v-\Pi_hv\|_{H^{s}} 
\le Ch^{m-s}\|v\|_{H^{m}},\quad 0\le s\le 1,\quad 1\le m\le 2.
\end{align}

\end{enumerate}

\item[(P2)] $\nabla\cdot v_h\in Q_h $ for $v_h\in V_h$. 

\end{enumerate}
The two properties above guarantee the inf-sup condition for the pair $V_h\times Q_h $, i.e., 
\begin{align}\label{inf-sup}
\|q_h\|_{L^2}
\le 
\sup_{v_h\in V_h \backslash \{0\} }
\frac{C(\nabla\cdot v_h,q_h)}{\|v_h\|_{H^1}} \quad\forall\,q_h\in Q_h .
\end{align}
%



It is known that the discrete divergence-free subspace of $V_h$ coincides with its pointwise divergence-free subspace, i.e., 
\begin{align}\label{def-Xh}
X_h:=\{v_h\in V_h: (\nabla\cdot v_h, q_h) =0\,\,\, \forall\, q_h\in Q_h \} 
= \{v_h\in V_h: \nabla\cdot v_h=0\}  .
\end{align}
Therefore,
$$V_h\subset H^1_0\quad\mbox{and}\quad X_h\subset \dot H^1_0\subset \dot L^2 = X  ,$$ 
where $\dot H^1_0$ and $\dot L^2 $ are divergence-free subspaces of $H^1_0$ and $L^2$. Then it suffices to find $u_h^n\in X_h$, $n=1,\dots,N$, such that 
\begin{align}\label{fully-FEM-Euler}
\left\{\begin{aligned}
\bigg(\frac{ u_h^{n+1} - u_h^{n} }{\tau}, v_h\bigg)
+ \nu(\nabla u_h^{n+1}, \nabla  v_h)
+ (u_h^{n}\cdot\nabla u_h^{n+1}, v_h) &= 0 \quad\forall\, v_h\in X_h,\,\,\,  \\        
u_h^0&=P_{X_h}u^0,
\end{aligned}\right.
\end{align}
where $P_{X_h}:\dot L^2\rightarrow X_h$ denotes the $L^2$-orthogonal projection onto $X_h$.

\begin{thm}\label{THM:FEM-Euler}
Let $u^0\in \dot L^2$ and assume that the finite element space $X_h\times Q_h$ has properties {\rm(P1)--(P2).} Assume that the solution of the NS problem \eqref{NSE} has the following regularity: 
\begin{align}\label{reg-u-fd}
u\in C([0,T];L^2)\cap L^\infty(0,T;H^3), 
\end{align} 
then the fully discrete solution given by \eqref{fully-FEM-Euler} has the following error bound: 
\begin{align}\label{Fully_Error_bound}
\|u_h^n-u(n\tau) \|_{L^2} 
\le 
C ( \tau +  h ),
\end{align}
where the constant $C$ depends only on $u^0$ and $T$. 
\end{thm}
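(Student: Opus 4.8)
The plan is the classical error-splitting plus discrete-Grönwall strategy, executed purely in $L^2$ so that no viscous dissipation is spent (hence the bound is insensitive to $\nu\downarrow 0$). Write $e^n:=u_h^n-u(n\tau)=\theta^n+\rho^n$, with projection error $\rho^n:=P_{X_h}u(n\tau)-u(n\tau)$ and discrete error $\theta^n:=u_h^n-P_{X_h}u(n\tau)\in X_h$. Since $\nabla\cdot u(n\tau)=0$, property (P1)(i) forces $\Pi_h u(n\tau)\in X_h$, so the best-approximation property of the $L^2$-projection and \eqref{Fortin-Error} give $\|\rho^n\|_{L^2}\le Ch^2\|u\|_{L^\infty_tH^2}$ and, with the standard inverse estimate on $X_h$, $\|\nabla\rho^n\|_{L^2}\le Ch\|u\|_{L^\infty_tH^2}$; crucially $(\rho^n,v_h)=0$ for every $v_h\in X_h$. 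I would first record that each step of \eqref{fully-FEM-Euler} is uniquely solvable by Lax--Milgram, since $w\mapsto\tau^{-1}(w,v_h)+\nu(\nabla w,\nabla v_h)+(u_h^n\cdot\nabla w,v_h)$ is coercive on $X_h$ because $(u_h^n\cdot\nabla v_h,v_h)=0$ by $\nabla\cdot u_h^n=0$ — the same identity that powers the stability estimate.

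Next I would derive the discrete error equation. Testing \eqref{NSE} at time $(n+1)\tau$ against $v_h\in X_h\subset\dot H^1_0$ kills the pressure; subtracting \eqref{fully-FEM-Euler}, multiplying by $\tau$, and taking $v_h=\theta^{n+1}$ gives, after $(\rho^{n+1}-\rho^n,\theta^{n+1})=0$,
\[
\|\theta^{n+1}\|_{L^2}^2-(\theta^n,\theta^{n+1})+\tau\nu\|\nabla\theta^{n+1}\|_{L^2}^2=-\tau\nu(\nabla\rho^{n+1},\nabla\theta^{n+1})-\tau\,b_{\mathrm{NL}}-\tau(\mathcal R^{n+1},\theta^{n+1}),
\]
where $\mathcal R^{n+1}=\tau^{-1}(u((n+1)\tau)-u(n\tau))-\partial_tu((n+1)\tau)$ and $b_{\mathrm{NL}}=(u_h^n\cdot\nabla u_h^{n+1}-u((n+1)\tau)\cdot\nabla u((n+1)\tau),\theta^{n+1})$. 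One has $\|\mathcal R^{n+1}\|_{L^2}\le C\tau$ by bootstrapping $\partial_tu=-\mathbb P(u\cdot\nabla u)+\nu\Delta u$ against the $H^3$-regularity to control $\partial_{tt}u$ in $L^2$, and the viscous cross term is harmless: $\tau\nu|(\nabla\rho^{n+1},\nabla\theta^{n+1})|\le\tau\nu\|\nabla\theta^{n+1}\|_{L^2}^2+C\tau\nu h^2$, which is absorbed by the good left-hand term at cost $O(\tau\nu h^2)$.

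The heart of the argument — and the step I expect to be the main obstacle, precisely because insisting on a $\nu$-robust estimate leaves no viscous term in which to absorb a stray $\|\nabla\theta^{n+1}\|_{L^2}$ — is $b_{\mathrm{NL}}$. I would split
\[
u_h^n\cdot\nabla u_h^{n+1}-u((n+1)\tau)\cdot\nabla u((n+1)\tau)=e^n\cdot\nabla u_h^{n+1}+u(n\tau)\cdot\nabla e^{n+1}+\big(u(n\tau)-u((n+1)\tau)\big)\cdot\nabla u((n+1)\tau),
\]
then expand $u_h^{n+1}=u((n+1)\tau)+\theta^{n+1}+\rho^{n+1}$ and $e^{n+1}=\theta^{n+1}+\rho^{n+1}$. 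Two cancellations are decisive: $(e^n\cdot\nabla\theta^{n+1},\theta^{n+1})=0$ and $(u(n\tau)\cdot\nabla\theta^{n+1},\theta^{n+1})=0$, both by divergence-freeness (note $e^n=u_h^n-u(n\tau)$ is divergence-free since $u_h^n\in X_h$ and $u(n\tau)$ both are). Every remaining piece is bounded by $C\tau(\|\theta^n\|_{L^2}+h+\tau)\|\theta^{n+1}\|_{L^2}$ via Hölder, the $2$D embedding $H^3\hookrightarrow W^{1,\infty}$, $\|\nabla\rho^{n+1}\|_{L^2}\le Ch$ and $\|u(n\tau)-u((n+1)\tau)\|_{L^2}\le C\tau$ — except the single term $(e^n\cdot\nabla\rho^{n+1},\theta^{n+1})$. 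For it I would use the Ladyzhenskaya inequality together with the inverse estimate on $X_h$ (so $\|\theta^n\|_{L^4}\le Ch^{-1/2}\|\theta^n\|_{L^2}$ and likewise for $\theta^{n+1}$), $\|e^n\|_{L^4}\le\|\theta^n\|_{L^4}+Ch^{3/2}$ and $\|\nabla\rho^{n+1}\|_{L^2}\le Ch$; the mesh powers cancel exactly, giving $|(e^n\cdot\nabla\rho^{n+1},\theta^{n+1})|\le C(\|\theta^n\|_{L^2}+h^2)\|\theta^{n+1}\|_{L^2}$. Collecting everything, using $2(\theta^n,\theta^{n+1})\ge-\|\theta^n\|_{L^2}^2-\|\theta^{n+1}\|_{L^2}^2$ and Young's inequality, yields $\|\theta^{n+1}\|_{L^2}^2\le(1+C\tau)\|\theta^n\|_{L^2}^2+C\tau(h+\tau)^2$ for $\tau$ small; since $\theta^0=u_h^0-P_{X_h}u^0=0$, the discrete Grönwall inequality gives $\sup_n\|\theta^n\|_{L^2}\le C(h+\tau)$, and the triangle inequality with $\|\rho^n\|_{L^2}\le Ch^2$ finishes \eqref{Fully_Error_bound}. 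The only genuinely delicate points are this nonlinear term and the bootstrapping of $\partial_{tt}u\in L^\infty_tL^2$ from the equation; everything else is bookkeeping.
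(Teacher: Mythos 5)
The paper does not actually reproduce a proof of this theorem --- it defers to \cite{LMU22} --- so your sketch stands on its own. The overall architecture is sound and standard: the splitting $e^n=\theta^n+\rho^n$ with the $L^2$-projection onto $X_h$, the orthogonality $(\rho^{n+1}-\rho^n,\theta^{n+1})=0$, the two divergence-free cancellations in the nonlinear term, the Ladyzhenskaya-plus-inverse-estimate treatment of $(e^n\cdot\nabla\rho^{n+1},\theta^{n+1})$ (the mesh powers do cancel as you claim), and the discrete Gr\"onwall closure from $\theta^0=0$ all work. The one genuine gap is the consistency error. You assert $\|\mathcal R^{n+1}\|_{L^2}\le C\tau$ ``by bootstrapping $\partial_t u=-\mathbb{P}(u\cdot\nabla u)+\nu\Delta u$ against the $H^3$-regularity to control $\partial_{tt}u$ in $L^2$.'' That bootstrap does not close: differentiating the equation in time gives $\partial_{tt}u=-\mathbb{P}\,\partial_t(u\cdot\nabla u)+\nu\Delta\partial_t u$, and the last term contains $\nu^2\Delta^2u$, which requires $u\in H^4$, not $H^3$. (For $\nu=0$ your bootstrap is fine; the obstruction is exactly the viscous contribution, and it cannot be absorbed into a $\nu$-dependent constant either, since $\Delta^2u\notin L^2$ under the stated hypothesis.)

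The repair is precisely the ``integration-by-parts technique'' the paper flags in the sentence following the theorem. Write $\mathcal R^{n+1}=\tau^{-1}\int_{t_n}^{t_{n+1}}\bigl(\partial_tu(s)-\partial_tu(t_{n+1})\bigr)\,ds$ and split it into convective and viscous parts. The convective part is bounded in $L^2$ by $C\tau\|\partial_t(u\cdot\nabla u)\|_{L^\infty_tL^2}\le C\tau$, which only needs $\partial_tu\in L^\infty_tH^1$ and $u\in L^\infty_tH^3$. The viscous part has the form $\nu\Delta w$ with $\|\nabla w\|_{L^2}\le C\tau\|\nabla\partial_tu\|_{L^\infty_tL^2}\le C\tau$; instead of estimating it in $L^2$, pair it with $\theta^{n+1}$ and integrate by parts to get $-\nu(\nabla w,\nabla\theta^{n+1})\le C\nu\tau\|\nabla\theta^{n+1}\|_{L^2}$, which after multiplication by $\tau$ is absorbed into the dissipation $\tau\nu\|\nabla\theta^{n+1}\|_{L^2}^2$ that you kept on the left, at a cost of $C\nu\tau^3$ per step, i.e.\ $C\nu T\tau^2$ in total --- consistent with the claimed first-order rate. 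With this modification your argument goes through. Two smaller points worth stating explicitly: the $L^4$ inverse inequality $\|v_h\|_{L^4}\le Ch^{-1/2}\|v_h\|_{L^2}$ presumes a quasi-uniform mesh, and $\|\nabla\rho^n\|_{L^2}\le Ch$ requires the intermediate comparison with $\Pi_hu(n\tau)\in X_h$ before invoking the inverse estimate, since $\rho^n$ itself is not a finite element function.
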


The proof can be found in \cite{LMU22}. It is worth pointing out that the assumption $u\in L^\infty(0,T;H^3)$ can be seen through an integration-by-parts technique as in \cite{CLW24} without introducing the smoothing effect of NS equations as in \cite{LMU22}. Moreover, due to the embeddings $B^4_{\infty,1}(\T^2)\hookrightarrow H^3(\T^2)$ and $B^3_{\infty,2}(\T^2)\hookrightarrow H^3(\T^2)$ one can still obtain the $L^2$-error bound \eqref{Fully_Error_bound} under the regularity assumptions 
\begin{align}\label{new_reg}
    u\in C([0,T];L^2)\cap L^\infty(0,T;B^4_{\infty,1})\quad \mbox{or} \quad u\in C([0,T];L^2)\cap L^\infty(0,T;B^3_{\infty,2})
\end{align}
as in Theorem~\ref{Thm1.2} and Theorem~\ref{Thm1.3}.

On the other hand, it seems very challenging to obtain a Besov error bounds such as $\|u_h^n-u(n\tau) \|_{B^0_{\infty,1}} $ or $\|u_h^n-u(n\tau) \|_{B^0_{\infty,2}} $ under the same assumptions in \eqref{new_reg} above. The main difficulty is lack of approximation property (P1) of the finite element projection in Besov spaces. Indeed there are many works in such area and we refer the readers to \cite{BG02,BDD04,Gan17,Guer04} for more discussion. More specifically speaking, $L^p$ approximation of the finite element space can be obtained as long as the function is in $B^{\alpha}_{p,p}$ from the physical point of view instead of frequency. However, it seems that one requires extra regularity to get $\|u_h^n-u(n\tau) \|_{B^0_{\infty,1}}$ error due to lack of the tools. Since we focus on a general Besov framework, we will not dig further in this direction in this paper.

\subsection{Fourier spectral Picard iteration method}

In this subsection we show that the semi-implicit scheme can be extended to the full discretization using a Fourier spectral Picard iteration method:
\begin{equation}\label{5.1}
    \begin{cases}
        &\frac{u^{(m+1)}-u^n}{\tau}+\LP \Pi_N(u^n\cdot \na u^{(m)})=\nu\De u^{(m+1)},\\
        &\div u^{(m+1)}=0,\\
        &u^{(0)}=u^n,\quad u^0=\Pi_N u_0,
    \end{cases}
\end{equation}
where $\tau$ is the time step. For $N\geq 2$, we introduce the space
$$X_N=\text{span}\left\{\cos(k\cdot x)\ ,\ \sin(k\cdot x):\ \ k=(k_1,k_2)\in\Z^2\ ,\ |k|_\infty=\max\{|k_1|,|k_2|\}\in[1, N] \right\} .$$
We define $\Pi_N$ to be the Fourier truncation operator $1\le|k|_\infty\leq N$. Here at each time step $n$ we implement an iteration of $u^{(m)}$ and eventually $u^{(m)}$ converges. We therefore define $\lim_m u^{(m)}=u^{n+1}$ up to a local tolerance. Such iteration \eqref{5.1} converges and we refer the readers to \cite{CLW24} for the details. The reason that we introduce such Picard iteration is to simplify the computation; indeed one can still compute \eqref{1.5} via Fourier spectral method with a convolution solver. The semi-implicit Fourier spectral (Picard iteration) scheme we consider is the following:
\begin{equation}\label{6.9}
\begin{cases}
&\frac{u^{n+1}-u^n}{\tau}+\LP\Pi_N(u^{n}\cdot \na u^{n+1}) = \nu \Delta u^{n+1},\\
&\div u^{n+1}=0,\\
&u^0=\Pi_N u_0.
\end{cases}
\end{equation}
We have the following Besov error estimate for the scheme \eqref{6.9}.

\begin{thm}\label{Thm6.1}Assume that $u(t,x) $ is the exact solution to \eqref{NSE}. 
\begin{description}
\item[(i) $B^0_{\infty,1}$-error]
Assume $ u\in C([0,T];L^2)\cap L^\infty(0,T;B^4_{\infty,1})$, then the following $B^0_{\infty,1}$-error estimate holds:
 \begin{equation}\label{6a}
   \sup_{n} \|u^{n} -u
   \left( n\tau\right)\|_{B^0_{\infty,1}}\leq C_1(\tau+N^{-3}).
 \end{equation}

\item[(ii) $B^0_{\infty,2}$-error]
Assume $ u\in C([0,T];L^2)\cap L^\infty(0,T;B^3_{\infty,2})$, then the following $B^0_{\infty,2}$-error estimate holds for any $\delta>0$:
 \begin{equation}\label{6b}
   \sup_{n} \|u^{n} -u
   \left( n\tau \right)\|_{B^0_{\infty,2}}\leq C_2(\tau+N^{-2+\delta}).
 \end{equation}
\end{description}
Here the constant $C_1>0$ above only depends on the initial condition $u_0$ while $C_2>0$ depends on $u_0$ and $\delta>0$. Both constants are independent of the viscosity $\nu$.

\end{thm}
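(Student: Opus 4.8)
The plan is to view Theorem~\ref{Thm6.1} as a perturbation of the semi-discrete error analysis of Sections~\ref{sec:Besov1} and~\ref{sec:Besov2}, the perturbation being the Fourier truncation $\Pi_N$ and the projected initial datum $u^0=\Pi_N u_0$. Recall that the scheme \eqref{6.9} is well posed, with $u^{n+1}\in X_N$, because it is the fixed point of the convergent Picard iteration discussed above (see \cite{CLW24}). First I would record the fully discrete counterpart of Theorem~\ref{Thm1.1}: running its proof verbatim and using that $\Pi_N$ commutes with $\De_j$, $\LP$ and $\De$, that $\|\Pi_N f\|_{B^s_{p,r}}\lesssim\|f\|_{B^s_{p,r}}$, and that $u^n\in X_N$ forces $\De_j u^n=0$ for $2^j\gtrsim N$, one obtains $\sup_n\|u^n\|_{B^s_{\infty,1}}\le 8\|u_0\|_{B^s_{\infty,1}}$ for $s\ge1$ and $\sup_n\|u^n\|_{B^s_{\infty,2}}\le 8\|u_0\|_{B^s_{\infty,2}}$ for $s>1$; the commutator between $\Pi_N$ and $u^n\cdot\na$ that appears in this step is of exactly the same type as the truncation terms estimated below.

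Next I would set up the error equation. Writing $e^n=u^n-u(t_n)$ and subtracting \eqref{6.9} from the identity $u(t_{n+1})=u(t_n)+\int_{t_n}^{t_{n+1}}\bigl(-\LP(u\cdot\na u)+\nu\De u\bigr)\,dt'$, one recovers precisely \eqref{4.1} with two modifications: the extra right-hand term $-\tau\,\LP\De_j(\Pi_N-I)(u^n\cdot\na u^{n+1})$, and the now nonzero initial error $e^0=(\Pi_N-I)u_0$. I would then rerun the $L^p$-estimates of Section~\ref{sec:Besov1} for part~(i), and of Section~\ref{sec:Besov2} for part~(ii), line by line: every $I_k$, $I'_k$ term is bounded exactly as before, using the hypotheses $u\in L^\infty(0,T;B^4_{\infty,1})$, resp.\ $u\in L^\infty(0,T;B^3_{\infty,2})$, which are exactly the regularity inputs used there. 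The only new contribution is $C\tau\|(\Pi_N-I)(u^n\cdot\na u^{n+1})\|_{B^0_{\infty,1}}$ added to the recursion \eqref{4.14}, respectively $C\tau\|(\Pi_N-I)(u^n\cdot\na u^{n+1})\|_{B^0_{\infty,2}}\,\|e^{n+1}\|_{B^0_{\infty,2}}$ added to \eqref{5.x}.

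The heart of the argument is the truncation estimate. Since $\Pi_N-I$ is a Fourier multiplier supported in $\{|k|_\infty>N\}$ and $g:=u^n\cdot\na u^{n+1}$ is a trigonometric polynomial of degree $\lesssim N$, the piece $(\Pi_N-I)g$ involves only finitely many Littlewood--Paley shells $2^j\sim N$, so that $\|(\Pi_N-I)g\|_{B^0_{\infty,1}}\lesssim N^{-\sg}\|g\|_{B^{\sg}_{\infty,1}}$ and $\|(\Pi_N-I)g\|_{B^0_{\infty,2}}\lesssim N^{-\sg}\|g\|_{B^{\sg}_{\infty,2}}$ for any $\sg>0$ (the arbitrarily small $\de>0$ in part~(ii) being chosen to absorb, in particular, the harmless logarithmic factor produced by the sharp cutoff). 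Using that $B^{\sg}_{\infty,r}$ is a Banach algebra for $\sg>0$ — which follows from Lemmas~\ref{T} and~\ref{R} — together with the fully discrete stability, $\|g\|_{B^3_{\infty,1}}\lesssim\|u^n\|_{B^3_{\infty,1}}\|u^{n+1}\|_{B^4_{\infty,1}}\lesssim_{u_0}1$ under $u_0\in B^4_{\infty,1}$, while $\|g\|_{B^2_{\infty,2}}\lesssim\|u^n\|_{B^2_{\infty,2}}\|u^{n+1}\|_{B^3_{\infty,2}}\lesssim_{u_0}1$ under $u_0\in B^3_{\infty,2}$; hence $\|(\Pi_N-I)g\|_{B^0_{\infty,1}}\lesssim_{u_0}N^{-3}$ and $\|(\Pi_N-I)g\|_{B^0_{\infty,2}}\lesssim_{u_0,\de}N^{-2+\de}$. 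Moreover $\|e^0\|_{B^0_{\infty,1}}\lesssim N^{-4}\|u_0\|_{B^4_{\infty,1}}$ and $\|e^0\|_{B^0_{\infty,2}}\lesssim N^{-3}\|u_0\|_{B^3_{\infty,2}}$, which are of lower order. Feeding these into the two recursions and applying the discrete Gr\"onwall inequality exactly as in \eqref{4.14} and \eqref{5.x} then gives $\sup_n\|e^n\|_{B^0_{\infty,1}}\lesssim\tau+N^{-3}$ and $\sup_n\|e^n\|^2_{B^0_{\infty,2}}\lesssim\tau^2+N^{-4+2\de}$, which are \eqref{6a} and \eqref{6b}.

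I expect the main obstacle to be organizational rather than conceptual: one has to check that each of the many terms of Sections~\ref{sec:Besov1}--\ref{sec:Besov2} survives the insertion of $\Pi_N$ — it does, because $\Pi_N$ commutes with the Littlewood--Paley and Leray projections and with $\De$, and is bounded on all the relevant spaces — and one has to establish the fully discrete stability, which is the genuinely new ingredient since Theorem~\ref{Thm1.1} is stated only for \eqref{1.3}. The subtler quantitative point is the loss $\de$ in part~(ii): it is forced by combining the logarithmic growth of the operator norm of the sharp Fourier cutoff with the fact that the available regularity of $u$ is capped at $B^3_{\infty,2}$ by hypothesis. No constraint linking $\tau$ and $N$, and no smallness of $\tau$ beyond $\tau\le T$, is required.
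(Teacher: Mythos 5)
Your proposal is correct and follows essentially the same strategy as the paper: treat the fully discrete scheme as a perturbation of the semi-discrete analysis of Sections~\ref{sec:Besov1}--\ref{sec:Besov2}, observe that $\Pi_N$ commutes with $\De_j$ and $\LP$ so that the old terms survive unchanged, isolate a single spectral-truncation remainder, bound it by Bernstein-type spectral accuracy ($N^{-3}$ via $B^4_{\infty,1}$-regularity, $N^{-2+\delta}$ via $B^3_{\infty,2}$-regularity), and close with the same discrete Gr\"onwall recursions. The one substantive difference is where the truncation error lands: you attach $(\mathcal{I}-\Pi_N)$ to the \emph{discrete} nonlinearity $u^n\cdot\na u^{n+1}$, which forces you to establish fully discrete stability in the top norms $B^4_{\infty,1}$ (resp.\ $B^3_{\infty,2}$) in order to control $\|u^n\cdot\na u^{n+1}\|_{B^3_{\infty,1}}$; the paper instead adds and subtracts so that $(\mathcal{I}-\Pi_N)$ acts on the \emph{exact} nonlinearity $u\cdot\na u$, whose regularity is a hypothesis of the theorem, so only the lower-order fully discrete stability already used in estimating $\widetilde I_3$ is needed. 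Your route buys a more self-contained account (you also track the initial projection error $e^0=(\Pi_N-\mathcal{I})u_0$, which the paper leaves implicit), at the cost of the extra high-norm stability step; both versions gloss over the borderline shells $2^j\sim N$ where $\Pi_N\De_j$ is neither $\De_j$ nor $0$, and your remark about absorbing the resulting logarithmic loss into $\delta$ in part (ii) is the honest way to handle it.
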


The proof of Theorem~\ref{Thm6.1} is very similar to \cite{CLW24} and \cite{CLW25} and therefore we only sketch the proof here. First we note that $\Pi_Nu_0\in X_N$ and therefore by induction we have $u^n\in X_N\ ,\forall n\geq 0$. In addition we can further derive that $\div u^n=0$ for all $n$. We then discretize the continuous NS equation as follows:
\begin{equation}
\begin{aligned}
    u^{n+1}-u\left(t_{n+1}\right) &=u^n - \tau\Pi_N\LP(u^{n} \cdot \na u^{n+1}) + \tau \nu \De u^{n+1} -u\left(t_n\right) +\int^{t_{n+1}}_{t_n}   \LP(u \cdot \na u) - \nu \De u \, dt'\nonumber\\
    &=u^n-u\left(t_n\right)+ \nu \int^{t_{n+1}}_{t_n} \De u^{n+1}- \De u\, dt' +  \int^{t_{n+1}}_{t_n} \Pi_N\LP(u \cdot \na u - u^{n} \cdot \na u^{n+1})\, dt'\nonumber\\
    &\quad+ \int^{t_{n+1}}_{t_n} \left(\mathcal{I}-\Pi_N\right)\LP(u \cdot \na u)\, dt'.
\end{aligned}
\end{equation}
It is clear that applying the Little-wood Paley projection $\De_j$ we can derive that
\begin{equation}
\begin{aligned}
    &\De_j u^{n+1}-\De_j u\left(t_{n+1}\right) \\
    =&\De_j u^n-\De_j u\left(t_n\right)+ \nu \int^{t_{n+1}}_{t_n} \De\De_j u^{n+1}- \De \De_j u\, dt' +  \int^{t_{n+1}}_{t_n} \Pi_N\LP\De_j (u \cdot \na u - u^{n} \cdot \na u^{n+1})\, dt'\\
    &\quad+ \int^{t_{n+1}}_{t_n} \left(\mathcal{I}-\Pi_N\right)\LP\De_j(u \cdot \na u)\, dt'.
\end{aligned}
\end{equation}
Note that $\Pi_N\De_j=\De_j$ or $0$, therefore 
\begin{align*}
\widetilde{I}_3\coloneqq \int^{t_{n+1}}_{t_n} \Lg \LP \Pi_N\De_j (u \cdot \na u - u^{n} \cdot \na u^{n+1}), |\De_j e^{n+1}|^{p-2} \De_j e^{n+1} \Rg \, dt'    
\end{align*}
can be estimated exactly as $I_3$ or $I_3'$. Now the only extra term (compared to \eqref{4.1}) is $\sum_j\|B_{n}\|_{L^\infty}$ or $\sum_j\|B_{n}\|^2_{L^\infty}$, where
\begin{align*}
    B_n=\int^{t_{n+1}}_{t_n} \left(\mathcal{I}-\Pi_N\right)\LP\De_j(u \cdot \na u)\, dt'.
\end{align*}
Recall the definition of the frequency cut-off operator $\De_j$ back in \eqref{aeq2}. In short words, $\sum_j (\mathcal{I}-\Pi_N)\De_j\approx \sum_{2^j\ge N}\De_j$ in our setting. Therefore we have
\begin{align*}
  \sum_j\|B_n\|_{L^\infty} \lesssim\sum_{2^j\ge N}N^{-3} \|\na^3 B_n\|_{\infty}\lesssim \sum_{2^j\ge N}\tau N^{-3} \|\na^3 \De_j (u\cdot \na u)\|_{L^\infty_t L^\infty}.
    \end{align*}
Therefore we get by Lemma~\ref{PL1}
\begin{align*}
  \sum_j\|B_n\|_{L^\infty} \lesssim \tau N^{-3} \|u\|^2_{L^\infty_t B^4_{\infty,1}}.
    \end{align*}
The following error estimate then holds:
\begin{align*}
          \|e^{n+1}\|_{B^0_{\infty,1}} \leq \frac{1+C^* \tau}{1-C^* \tau}  \|e^{n}\|_{B^0_{\infty,1}} + \frac{C^* (1+\nu) \tau^2}{1-C^* \tau}+\frac{C^*\tau N^{-3}}{1-C^*\tau},
    \end{align*}
which proves \eqref{6a}. On the other hand, we note that 
\begin{align*}
  \sum_j\|B_n\|^2_{L^\infty} \lesssim&\sum_{2^j\ge N}N^{-4+2\delta} \|\Lg\na\Rg^{2-\de} B_n\|^2_{\infty}\\\lesssim&\tau^2N^{-4+2\de}\|u\cdot \na u\|^2_{L^\infty_tB^{2-\de}_{\infty,2}}\\
  \lesssim&\tau^2N^{-4+2\de}\|u\|^4_{L^\infty_tB^{3}_{\infty,2}}.
    \end{align*}
The error estimate \eqref{6b} then follows. {We see that the non-sharpness of $B^0_{\infty,2}$-error $N^{-2+\de}$ results from the fact that $B^0_{\infty,2}$ is not a multiplicative algebra, see Lemma~\ref{PL2} or Lemma~\ref{embed} and the discussion therein. }

\section{Numerical Experiments}\label{sec7}
In this section, we present several numerical examples to support our theoretical framework. We will demonstrate the error convergence and the vanishing viscosity rate in the following subsections.

\subsection{Convergence test}
In this subsection we present the $B^0_{\infty,1}$ and $ B^0_{\infty,2}$-error of the scheme \eqref{1.3}. More specifically, we will choose the following initial data $\vec{u_0}=(-0.5\sin(x)\cos(y),0.5\cos(x)\sin(y))$. This is suggested by \cite{CLW24}. Indeed we shall solve a forced incompressible NS system with explicit solution $\vec{u_e}$ given by the follows:
\begin{equation}\label{ue}
\vec{u_e}=(-0.5\exp(-t)\sin(x)\cos(y),0.5\exp(-t)\cos(x)\sin(y))
\end{equation}
with a forcing term $\vec{f_e}$ that can be computed explicitly. We then solve the scheme \eqref{1.5} with forcing:
{\begin{equation}\label{7.2}
\frac{u^{n+1}-u^n}{\tau} +\LP(u^n \cdot \na u^{n+1})=\nu\Delta u^{n+1}+f_e^{n+1}.
\end{equation}}
The $B^0_{\infty,1}$ and $B^0_{\infty,2}$-errors with different $\nu$-values at $T=2$ are given in the Table~\ref{table1}-\ref{table4} below and Figure~\ref{fig1}-\ref{fig4}. We can conclude that both errors are of order $O(\tau)$. 

\begin{table}[htb]
\begin{minipage}[b]{0.48\linewidth}
\centering
\begin{tabular}{c  c c c c} 
 \toprule 
 $\tau=0.01$ &  $B^0_{\infty,1}$-error & $B^0_{\infty,2}$-error \\ [0.5ex]
 \hline
 $\tau$  & 0.0018 & 0.0013 \\[0.5ex]
 
 $\tau/2$ &8.827e-04 &6.242e-04
 \\[0.5ex]
$\tau/4$  &4.401e-04&3.112e-04
 \\[0.5ex]
 $\tau/8$  &2.197e-04 &1.554e-04
 \\[0.5ex]
 $\tau/16$  & 1.098e-04&7.763e-05
 \\[0.5ex]
 $\tau/32$  &5.487e-05&3.880e-05\\
\bottomrule 
\end{tabular}
\captionof{table}{Errors with $\nu=1$.}\label{table1}
\end{minipage}
\hfill
\begin{minipage}[b]{0.48\linewidth}
\centering
\begin{tabular}{c  c c c c} 
 \toprule 
 $\tau=0.01$ & $B^0_{\infty,1}$-error & $B^0_{\infty,2}$-error \\ [0.5ex]
 \hline
 $\tau$   & 0.0020 & 0.0014 \\[0.5ex]
 
 $\tau/2$   &0.0010 & 7.100e-04
 \\[0.5ex]
$\tau/4$ &5.018e-04&3.548e-04
 \\[0.5ex]
 $\tau/8$   &2.508e-04&1.774e-04
 \\[0.5ex]
 $\tau/16$   & 1.254e-04&8.867e-05
 \\[0.5ex]
 $\tau/32$   &6.270e-05&4.433e-05\\
\bottomrule 
\end{tabular}
\captionof{table}{Errors with $\nu=0.1$.}\label{table2}
\end{minipage}
\end{table}

\begin{table}[htb]
\begin{minipage}[b]{0.48\linewidth}
\centering
\begin{tabular}{c  c c c c} 
 \toprule 
 $\tau=0.01$ & $B^0_{\infty,1}$-error & $B^0_{\infty,2}$-error \\ [0.5ex]
 \hline
 $\tau$  & 0.0040 & 0.0029 \\[0.5ex]
 
 $\tau/2$   &0.0020& 0.0014
 \\[0.5ex]
$\tau/4$ &0.0010 &7.147e-04
 \\[0.5ex]
 $\tau/8$   &5.054e-04&3.574e-04
 \\[0.5ex]
 $\tau/16$   & 2.527e-04& 1.787e-04
 \\[0.5ex]
 $\tau/32$   &1.263e-04&8.934e-05\\
\bottomrule %
\end{tabular}
\captionof{table}{Errors with $\nu=0.01$.}\label{table3} 
\end{minipage}
\hfill
\begin{minipage}[b]{0.48\linewidth}
\centering
\begin{tabular}{c  c c c c} 
 \toprule 
 $\tau=0.01$ & $B^0_{\infty,1}$-error & $B^0_{\infty,2}$-error \\ [0.5ex]
 \hline
 $\tau$ & 0.0086  & 0.0061  \\[0.5ex]
 
 $\tau/2$  &0.0043 & 0.0031 
 \\[0.5ex]
$\tau/4$ & 0.0022 & 0.0015
 \\[0.5ex]
 $\tau/8$  &0.0011 &7.642e-04 
 \\[0.5ex]
 $\tau/16$  &  5.404e-04&3.821e-04
 \\[0.5ex]
 $\tau/32$  &2.702e-04& 1.911e-04 \\
\bottomrule 
\end{tabular}
\captionof{table}{Errors with $\nu=0.00001$.}\label{table4}
\end{minipage}

\end{table}

\begin{figure}[htb]
\begin{minipage}[htb]{0.48\linewidth}
    \centering
\includegraphics[width=1\textwidth]{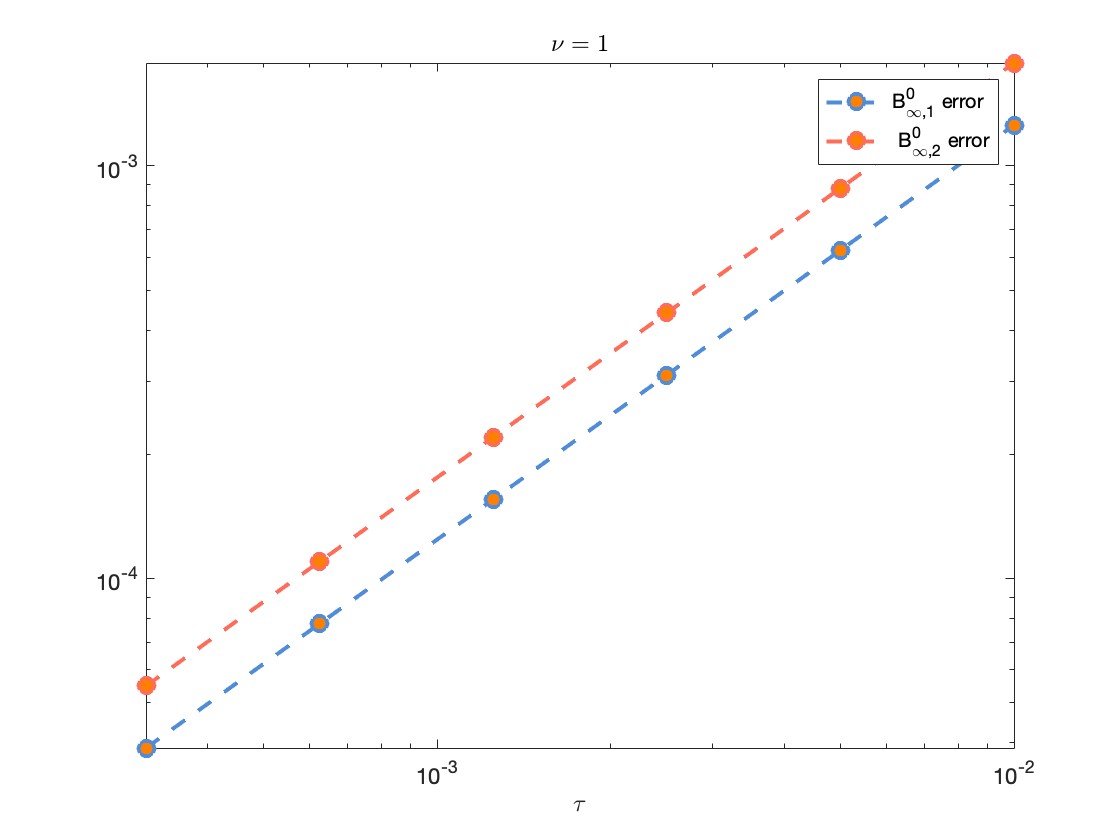}
 \captionof{figure}{Errors with $\nu=1$.}\label{fig1}
\end{minipage}
\hfill
\begin{minipage}[htb]{0.48\linewidth}
    \centering
\includegraphics[width=1\textwidth]{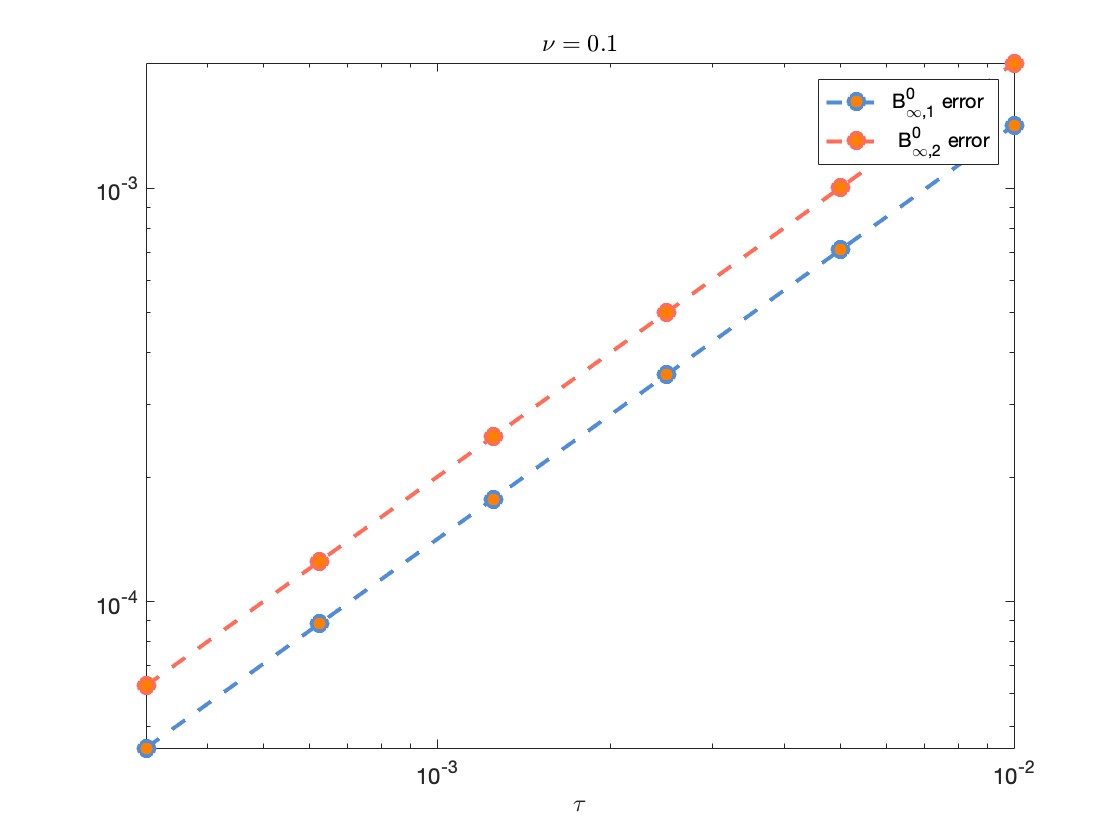}
 \captionof{figure}{Errors with $\nu=0.1$.}\label{fig2}
\end{minipage}
\end{figure}

\begin{figure}[htb]
\begin{minipage}[htb]{0.48\linewidth}
    \centering
\includegraphics[width=1\textwidth]{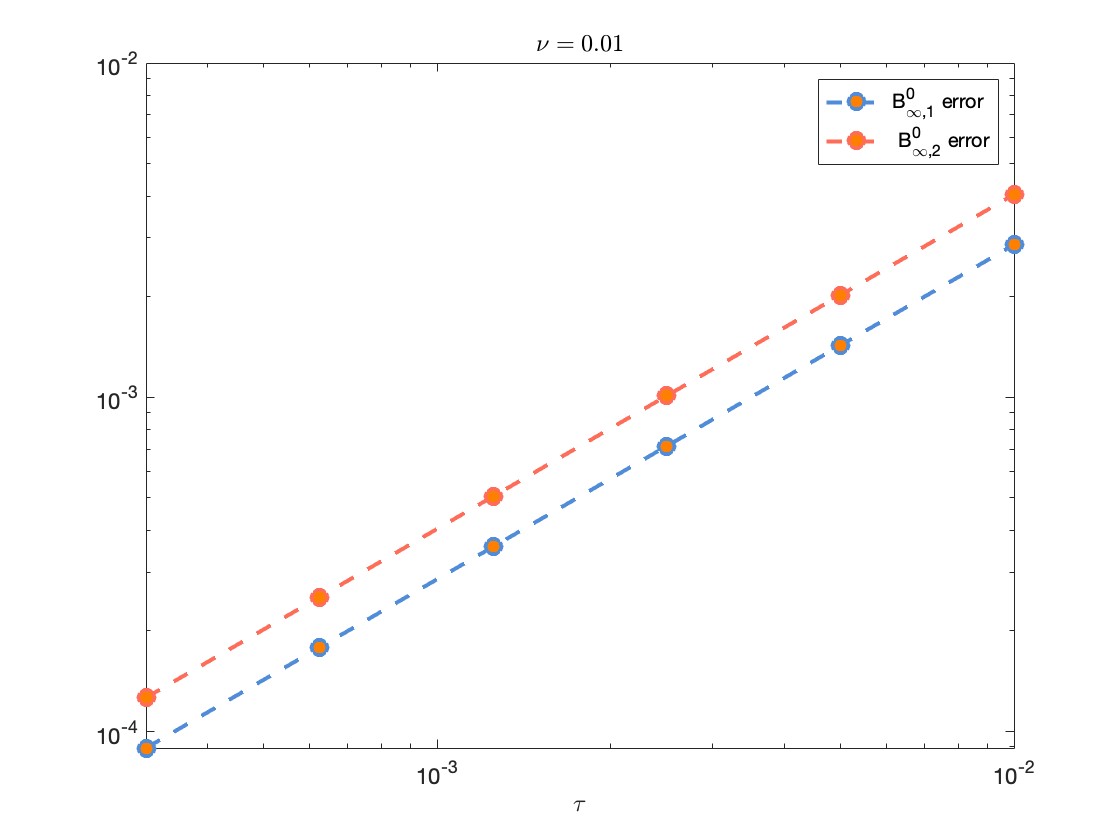}
\captionof{figure}{Errors with $\nu=0.01$.}\label{fig3}
\end{minipage}
\hfill
\begin{minipage}[htb]{0.48\linewidth}
    \centering
\includegraphics[width=1\textwidth]{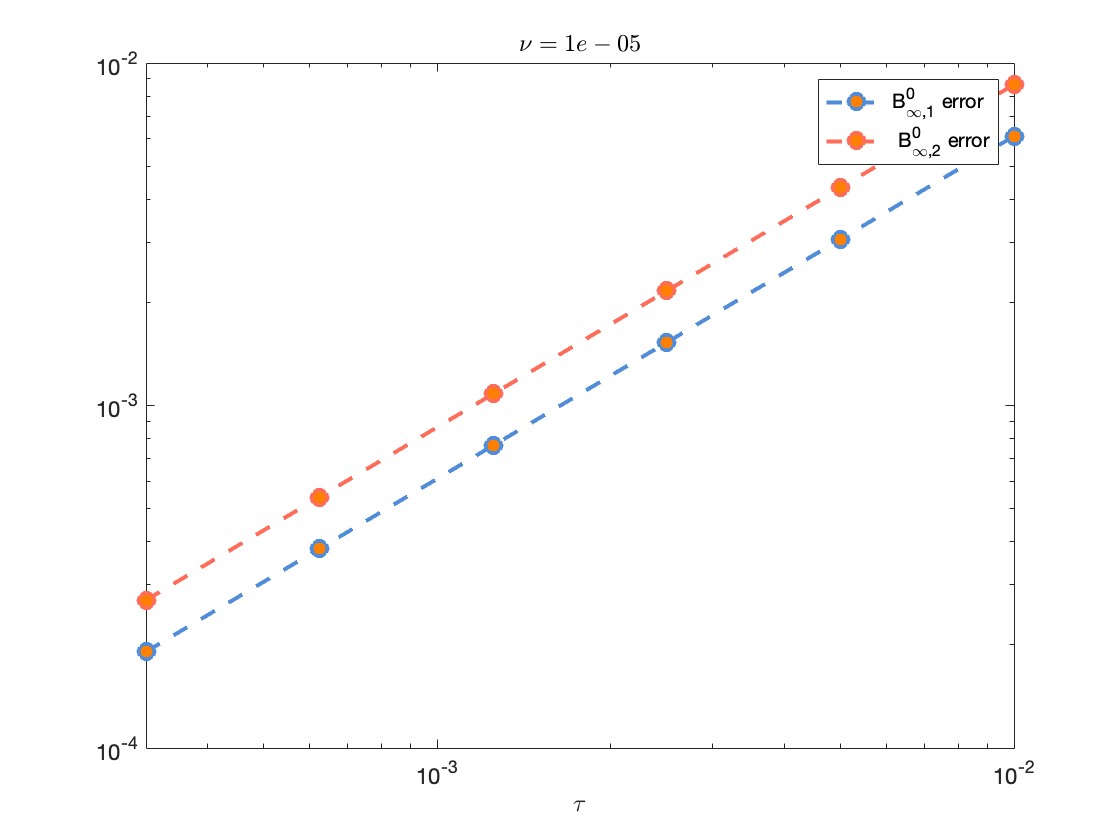}
\captionof{figure}{Errors with $\nu=0.00001$.}\label{fig4}
\end{minipage}

\end{figure}

\subsection{Vanishing viscosity rate}
In this subsection we show the rate of vanishing viscosity using the scheme \eqref{1.3} or \eqref{1.5}. We again adopt the explicit solution $\vec{u_e}$ defined in \eqref{ue} and solve the scheme \eqref{7.2}; we shall compare the numerical solutions of \eqref{7.2} to the 
reference solution under different choices of $\nu$-values at $T=0.1$ with fixed $\tau=0.0001$. The $B^0_{\infty,1}$ and $B^0_{\infty,2}$-differences are given in Table~\ref{table5} and Figure~\ref{fig5}. We can conclude that they are of order $O(\nu)$.

\begin{table}[htb]
\centering
\begin{tabular}{c  c c c c} 
 \toprule 
 $\nu=0.1$ & $L^2$-difference & $B^0_{\infty,1}$-difference & $B^0_{\infty,2}$-difference \\ [0.5ex]
 \hline
 $\nu$ &0.0418 & 0.0188 & 0.0133  \\[0.5ex]
 
 $\nu/2$ & 0.0210 & 0.0095 &0.0067
 \\[0.5ex]
$\nu/4$ &  0.0105& 0.0047 &0.0034
 \\[0.5ex]
 $\nu/8$ & 0.0053 &0.0024 &0.0017
 \\[0.5ex]
 $\nu/16$ & 0.0026 &0.0012 & 8.431e-04
 \\[0.5ex]
 $\nu/32$ & 0.0013 & 5.987e-04 &4.234e-04\\
\bottomrule 
\end{tabular}
\caption{Rate of the vanishing viscosity}
\label{table5}
\end{table}

\begin{figure}[htb]
    \centering
\includegraphics[width=0.5\textwidth]{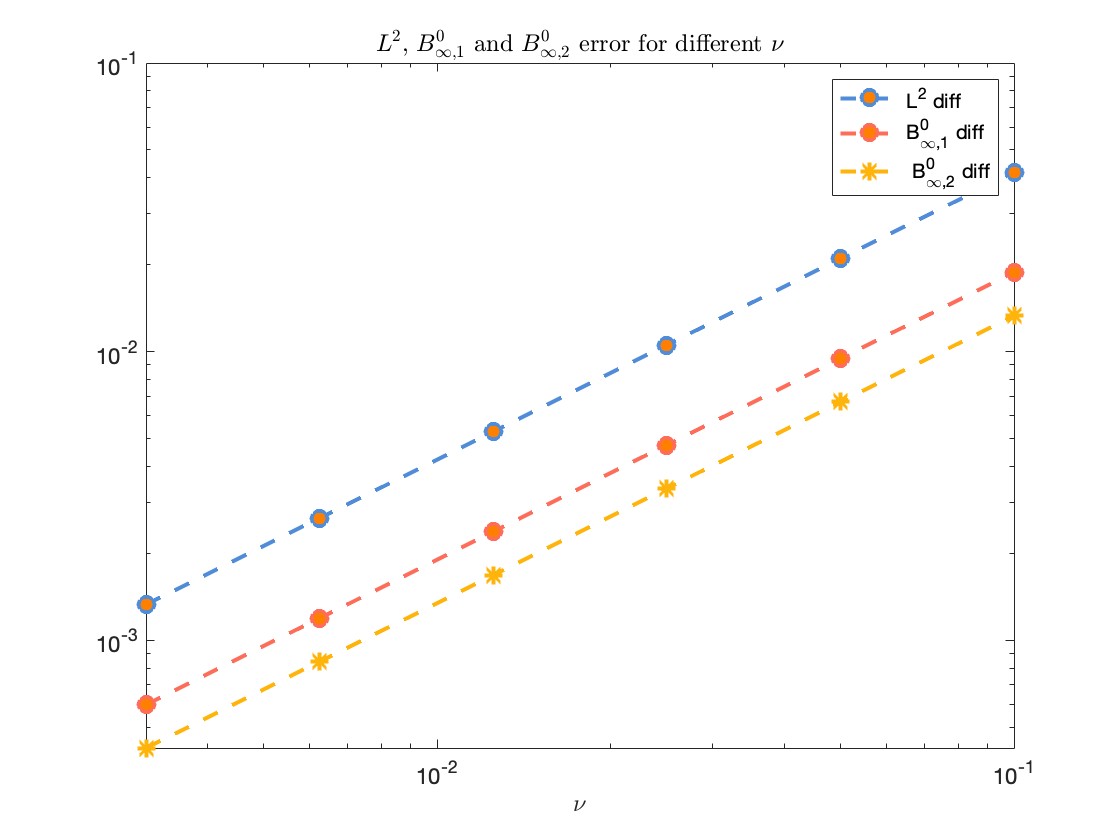}
 \caption{Rate of the vanishing viscosity}
\label{fig5}
\end{figure}

\section{Concluding remark}

In this paper, we have established a new Besov numerical framework for the Navier-Stokes and Euler equations, proving rigorous error estimates for a semi-implicit scheme in the $ B^0_{\infty,1}$ and $ B^0_{\infty,2}$ spaces. Our analysis demonstrates that this approach yields sharper, more localized convergence results than those attainable in classical Sobolev spaces, effectively bridging a gap between the analytical functional setting of the continuous equations and their discrete counterparts. A pivotal element of this work is the refined treatment of the $B^0_{\infty,2}$ case, where a targeted integration-by-parts technique was essential for managing the nonlinear advection term. This technique allowed for a precise transfer of derivatives, enabling a stable error estimate that directly reflects the critical regularity of the true solution. This methodological innovation not only resolves a key technical hurdle in low-regularity spaces but also provides a blueprint for analyzing other nonlinear fluid PDEs where similar terms pose analytical challenges.

This Besov-based framework opens several promising directions for future research. Immediate extensions include adapting the analysis to more complex models, such as the Navier-Stokes-Q-tensor systems, where interfacial dynamics and coupling phenomena could benefit from this finer-grained error control. Ultimately, this work underscores the significant potential of leveraging analytical function spaces in numerical analysis to develop more robust and theoretically sound computational methods.

\section*{Acknowledgement}
X. Cheng is partially supported by NSFC (Grants No. 12401270 and No. 42450192), Natural Science Foundation of Shanghai (Grant No. 24ZR1404200) and Shanghai Magnolia Talent Plan Pujiang Project (Grant No. 24PJA007). S. Wang is partially supported by the China National Postdoctoral Program for Innovative Talents (Grant No. BX20250061) and China Postdoctoral Science Foundation (Grant No. 2024M760058).

\bibliographystyle{abbrv}

\begin{thebibliography}{10}



\bibitem{BG02}
I.~Babuska and B.~Guo. Direct and inverse approximation theorems for the p-version of the finite element method in the framework of weighted Besov spaces. Part I: Approximability of functions in the weighted Besov spaces. {\em SIAM Journal on Numerical Analysis}, 39(5): 1512-1538, 2002.

\bibitem{BCD11}
H.~Bahouri, J.~Y.~Chemin and R.~Danchin. Fourier analysis and nonlinear partial differential
equations, {\em Springer, Heidelberg}, 2011.

\bibitem{BLW22}
G.~Bai, B.~Li and Y.~Wu.
\newblock{A constructive low-regularity integrator for the 1d cubic nonlinear Schr\"odinger equation under the Neumann boundary condition.}
\newblock{\em IMA J. Numer. Anal.}, 43(6): 3243–3281, 2023.

\bibitem{BKM84}
J.~T.~Beale, T.~Kato and A.~Majda. Remarks on the Breakdown of Smooth Solutions for the 3D Euler Equations. {\em Comm. Math. Phys.}, 94(1):61–66 1984.

\bibitem{BDD04}
P.~Binev, W.~Dahmen and R.~DeVore. Adaptive Finite Element Methods with convergence rates. {\em Numer. Math.}, 97: 219–268, 2004. 



\bibitem{BL15}
J.~Bourgain, D.~Li. Strong ill-posedness of the Incompressible Euler Equation in Borderline Sobolev Spaces. {\em Invent. Math.}, 201(1):97–157, 2015.

 \bibitem{BL15b}
 J. Bourgain and D. Li. Strong illposedness of the incompressible Euler equation in integer $C^m$ spaces.{\em Geom. Funct. Anal.}, 25: 1-86, 2015.

\bibitem{BP08}
J.~Bourgain and N.~Pavlovic. Ill-posedness of the Navier-Stokes Equations in a Critical
Space in 3D. {\em J. Funct. Anal.}, 255(9):2233–2247 2008.

\bibitem{BSV19}
T.~Buckmaster, S.~Shkoller and V.~Vicol. Nonuniqueness of weak solutions to the SQG equation. {\em Commun.
Pure Appl. Math.}, 72(9): 1809–1874, 2019.






\bibitem{CKL21}
X.~Cheng, H.~Kwon and D.~Li. Non-uniqueness of stationary weak solutions to the surface quasi-geostrophic equations. {\em Comm. Math. Phys.}, 388 (3): 1281-1295, 2021.

\bibitem{CLW24}
X.~Cheng, Z.~Luo and S.~Wang.
On semi-implicit schemes for the incompressible Euler equations via the vanishing viscosity limit. Preprint, arXiv:2406.12320.


\bibitem{CLW25}
X.~Cheng, Z.~Luo and S.~Wang. A sharp low regularity method for the surface quasi-geostrophic equations: a Besov framework. Preprint, submitted.


\bibitem{CLYY24}
X.~Cheng, Z.~Luo, Z.~Yang and C.~Yuan. Global well-posedness and uniform-in-time vanishing damping limit for the inviscid Oldroyd-B model, preprint, arXiv:2410.09340.




\bibitem{Chorin68}
A.J.~Chorin. Numerical solution of the Navier–Stokes equations. {\em Math. Comput.}, 22, 745–762, 1968.

\bibitem{Chorin69}
A.J.~Chorin. On the convergence of discrete approximations to the Navier–Stokes equations. {\em Math. Comput.}, 23, 341–353, 1969.


\bibitem{DLS13}
C.~De Lellis and L.~Sz\'ekelyhidi, Jr. Dissipative continuous Euler flows. {\em Invent. Math.}, 193(2): 377–407, 2013.



\bibitem{EL95}
W.~E and J.~Liu. Projection method I: convergence and numerical boundary layers. {\em SIAM journal on numerical analysis}, 1017-1057, 1995.
\bibitem{EL02}
W.~E and J.~Liu. Projection method III: spatial discretization on the staggered grid. {\em Mathematics of computation}, 71(237): 27-47, 2002.

\bibitem{Gan17}
T.~Gantumur. Convergence rates of adaptive methods, Besov spaces, and multilevel approximation. {\em Foundations of Computational Mathematics}, 17(4): 917-956, 2017.

\bibitem{Guer04}
J. L.~Guermond. A finite element technique for solving first-order PDEs in $L^p$. {\em SIAM Journal on Numerical Analysis}, 42(2): 714-737, 2004.

\bibitem{GZ03}
B.~Guo and J.~Zou. Fourier spectral projection method and nonlinear convergence analysis for Navier–Stokes equations. {\em Journal of mathematical analysis and applications}, 282(2): 766-791, 2003.

\bibitem{GLY19}
Z.~Guo, J.~Li and Z.~Yin. Local Well-posedness of the Incompressible Euler Equations in $B^1_{\infty,1}$
and the Inviscid Limit of the Navier-Stokes Equations. {\em J. Funct. Anal.}, 276(9): 2821–2830, 2019.


\bibitem{HJE18}
J.~Han, A.~Jentzen and W.~E. Solving high-dimensional partial differential equations using deep learning, {\em Proc. Natl. Acad. Sci. U.S.A.} 115(34): 8505-8510, 2018.


\bibitem{He08}
Y.~He. The Euler implicit/explicit scheme for the 2D time-dependent Navier-Stokes equations with
smooth or non-smooth initial data. {\em Math. Comp.}, 77(264): 2097–2124, 2008.

\bibitem{He13}
Y.~He. Euler implicit/explicit iterative scheme for the stationary Navier–Stokes equations. {\em Numer. Math.}, 123: 67–96, 2013.

\bibitem{HZ18}
Y.~He and J.~Zou. A priori estimates and optimal finite element approximation of the MHD flow in smooth domains. {\em ESAIM: Mathematical Modelling and Numerical Analysis}, 52(1): 181-206, 2018.

\bibitem{HR82}
J.G.~Heywood and R.~Rannacher. Finite element approximation of the nonstationary Navier–Stokes problem. I. Regularity of solutions and second-order spatial discretization. {\em SIAM J. Numer. Anal}., 19,
275–311, 1982.

\bibitem{HR90}
J.G.~Heywood and R.~Rannacher. Finite-element approximation of the nonstationary Navier–Stokes problem part IV: error analysis for second-order time discretization. {\em SIAM J. Numer. Anal.}, 27, 353–384, 1990.

\bibitem{HS00}
A. T.~Hill and E.~S\"uli. Approximation of the global attractor for the incompressible Navier-
Stokes equations, {\em IMA J. Numer. Anal.}, 20: 633--667, 2000.


\bibitem{HK08}
T.~Hmidi and S.~Keraani. Incompressible viscous flows in borderline Besov spaces. {\em Arch. for
Rational Mech. and Analysis}, 189(2):283–300, 2008.


\bibitem{HW93}
T. Y.~Hou and B.~Wetton.
{Second-Order Convergence of a Projection Scheme for the Incompressible Navier–Stokes Equations with Boundaries},
{\em SIAM Journal on Numerical Analysis},
30(3): 609--629, 1993.



\bibitem{JCLK21}
X. Jin, S. Cai, H. Li and G.E. Karniadakis. NSFnets (Navier-Stokes flow nets): Physics-informed neural networks for the incompressible Navier-Stokes equations. {\em J. Comp. Phys.}, 426: 109951, 2021.

\bibitem{KKLPWY21}
G.E. Karniadakis, I.G. Kevrekidis, L. Lu, P. Perdikaris, S. Wang and L. Yang. Physics-informed machine learning.{\em Nat. Rev. Phys.,} 3: 422–440, 2021. 


\bibitem{LL11}
Z. Lei and F. Lin. 
Global mild solutions of Navier–Stokes equations.
{\em Communications on Pure and Applied Mathematics}, 64(9): 1297-1304, 2011.


\bibitem{L34}
J.~Leray. Sur Le Mouvement Dun Liquide Visqueux Emplissant Lespace. {\em Acta Math.}, 63(1):193–248, 1934.





\bibitem{LQY22}
B.~Li, W.~Qiu, and Z.~Yang. A Convergent Post-processed Discontinuous Galerkin Method for Incompressible Flow with Variable Density. {\em J. Sci. Comput.}, 91(2), 2022.




\bibitem{LMS22}
B.~Li, S.~Ma and K.~Schratz. A Semi-implicit Exponential Low-Regularity Integrator for the Navier--Stokes Equations. {\em SIAM J. Numer. Anal.}, 60(4): 2273-2292, 2022.


\bibitem{LMU22}
B.~Li, S.~Ma and Y.~Ueda. Analysis of fully discrete finite element methods for 2D Navier–Stokes equations with critical initial data. {\em ESAIM: Mathematical Modelling and Numerical Analysis}, 56(6): 2105-2139, 2022.

\bibitem{Li19}
D.~Li. On Kato–Ponce and fractional Leibniz. {\em Rev. Mat. Iberoam.}, 35(1): 23–100, 2019.




\bibitem{MT98}
 M.~Marion and R.~Temam. Navier-Stokes equations: Theory and approximation, in Numerical
Methods for Solids (Part 3), Numerical Methods for Fluids (Part 1), {\em Handb. Numer. Anal.
6, Elsevier--North Holland}, Amsterdam, 503--689, 1998.

\bibitem{M07}
N.~Masmoudi. Remarks about the inviscid limit of the Navier-Stokes system, {\em Commun. Math. Phys.}, 270(3): 777–788, 2007.




\bibitem{RS21}
F.~Rousset and K.~Schratz. A general framework of low regularity integrators, {\em SIAM J.
Numer. Anal.}, 59(3): 1735--1768, 2021.




\bibitem{S88}
 E.~S\"uli. Convergence and non-linear stability of the Lagrange–Galerkin method for the Navier–Stokes equations. {\em Numer. Math.}, 53, 459–483, 1988.


\bibitem{Tem66}
R.~Temam. Sur l'approximation des solutions des\'equations de Navier-Stokes, {\em C.R. Acad. Sci.
Paris, Serie A}, 262, 219-221, 1966.


\bibitem{Tem24}
R.~Temam. Navier–Stokes equations: theory and numerical analysis. {\em American Mathematical Society}, 2024.


\bibitem{V84}
R.~Verf\"urth. Error estimates for a mixed finite element approximation of the Stokes equations. {\em RAIRO
Anal. Numer.} 18(2), 175–182, 1984.

\bibitem{Wang12}
 X.~Wang. An efficient second order in time scheme for approximating long time statistical properties of the two dimensional Navier–Stokes equations. {\em Numer. Math.}, 121: 753–779, 2012.

\bibitem{Wetton06}
B.~Wetton. Finite difference vorticity methods. {\em The Navier-Stokes Equations II—Theory and Numerical Methods: Proceedings of a Conference held in Oberwolfach, Germany, August 18–24, 1991.} Berlin, Heidelberg: Springer Berlin Heidelberg, 210-225, 2006.


\bibitem{WZ22}
Y. Wu and X. Zhao. Optimal convergence of a second order low-regularity integrator for the KdV equation.
{\em IMA J. Numer. Anal.}, 42(4): 3499–3528, 2022.

\bibitem{Y63}
V.~I.~Yudovich. Non-stationary Flows of an Ideal Incompressible Fluid. {\em Z. Vycisl.
Mat i Mat. Fiz.}, 3:1032–1066, 1963.

\end{thebibliography}

\end{document}